\tikzset{
  commutative diagrams/.cd, 
  arrow style=tikz, 
  diagrams={>=stealth}
}
\newcommand{\runningexample}{\noindent \textbf{Main example.} }
\theoremstyle{definition}
\newenvironment{customthm}[1]
  {\innercustomthm}
  {\endinnercustomthm}
\def\@tocline#1#2#3#4#5#6#7{\relax
  \ifnum #1>\c@tocdepth 
  \else
    \par \addpenalty\@secpenalty\addvspace{#2}%
    \begingroup \hyphenpenalty\@M
    \@ifempty{#4}{%
      \@tempdima\csname r@tocindent\number#1\endcsname\relax
    }{%
      \@tempdima#4\relax
    }%
    \parindent\z@ \leftskip#3\relax \advance\leftskip\@tempdima\relax
    \rightskip\@pnumwidth plus4em \parfillskip-\@pnumwidth
    #5\leavevmode\hskip-\@tempdima
      \ifcase #1
       \or\or \hskip 1em \or \hskip 2em \else \hskip 3em \fi%
      #6\nobreak\relax
    \dotfill\hbox to\@pnumwidth{\@tocpagenum{#7}}\par
    \nobreak
    \endgroup
  \fi}
\newcounter{marginnote}
\DeclareMathAlphabet{\mathpzc}{OT1}{pzc}{m}{it}
\theoremstyle{definition}
\newtheorem{theorem}{Theorem}[section]
\newtheorem{conjecture}[theorem]{Conjecture}
\newtheorem{corollary}[theorem]{Corollary}
\newtheorem{lemma}[theorem]{Lemma}
\newtheorem{proposition}[theorem]{Proposition}
\newtheorem{remark}[theorem]{Remark}
\newtheorem{assumption}[theorem]{Assumption}
\newtheorem*{runningexample*}{Running example}
\newtheorem*{aside*}{Aside}
\newtheorem{construction}[theorem]{Construction}
\newtheorem{definition}[theorem]{Definition}
\newtheorem{example}[theorem]{Example}
\newtheorem{proposition-definition}[theorem]{Proposition-Definition}
\DeclareMathOperator{\ev}{ev}
\DeclareMathOperator{\Hom}{Hom}
\newcommand{\RR}{\mathbb{R}}
\newcommand{\EE}{\mathbf{E}}
\newcommand{\LL}{\mathbf{L}}
\newcommand{\sqC}{\scalebox{0.8}[1.2]{$\sqsubset$}}
\newcommand{\st}{\star}
\newcommand{\op}[1]{\operatorname{#1}}
\newcommand{\ol}[1]{\overline{#1}}
\newcommand{\ul}[1]{\underline{#1}}
\newcommand{\bcd}{\begin{center}\begin{tikzcd}}
\newcommand{\ecd}{\end{tikzcd}\end{center}}
\newcommand{\Tlog}{\op{T}^{\op{log}}}
\newcommand{\TTlog}{\mathbf{T}^{\op{log}}}
\newcommand{\TT}{\mathbf{T}}
\newcommand{\e}{\mathrm{e}}
\newcommand{\Aaff}{\mathbb{A}}
\newcommand{\C}{\mathbb{C}}
\newcommand{\T}{\mathrm{T}}
\newcommand{\PP}{\mathbb{P}}
\newcommand{\OO}{\mathcal{O}}
\newcommand{\N}{\mathbb{N}}
\newcommand{\Z}{\mathbb{Z}}
\newcommand{\virt}{\op{virt}}
\newcommand{\HH}{\mathcal{H}}
\newcommand{\Speck}{\operatorname{Spec}\kfield}
\newcommand{\kfield}{\Bbbk}
\newcommand{\Norm}{\mathrm{N}}
\newcommand{\LogOb}{\mathrm{LogOb}}
\newcommand{\Euler}{\mathrm{e}}
\newcommand{\Mcal}{\mathcal{M}}
\newcommand{\Acal}{\mathcal{A}}
\newcommand{\Xcal}{\mathcal{X}}
\newcommand{\Ccal}{\mathcal{C}}
\newcommand{\Mfrak}{\mathfrak{M}}
\newcommand{\Rder}{\mathbf{R}^\bullet}
\newcommand{\Lder}{\mathbf{L}^\bullet}
\newcommand{\Kup}{\mathsf{K}}
\newcommand{\Fup}{\mathsf{F}}
\newcommand{\Log}{\operatorname{Log}}
\newcommand{\FF}{\mathbf{F}}
\newcommand{\Cstar}{\C^\times}
\newcommand{\Spec}{\operatorname{Spec}}
\newcommand{\acts}{\curvearrowright}
\NewDocumentCommand{\compatibilitydatum}{m m m m m m O{} O{} O{}}{
\begin{equation*} \begin{tikzcd}[ampersand replacement=\&]
  \: \arrow{r} \& {#1} \arrow{r} \arrow{d}{#7} \& {#2} \arrow{r} \arrow{d}{#8} \& {#3} \arrow{r}{[1]} \arrow{d}{#9} \& \: \\
  \: \arrow{r} \& {#4} \arrow{r} \& {#5} \arrow{r} \& {#6} \arrow{r} \& \:
\end{tikzcd} \end{equation*}}
\NewDocumentCommand{\commutingsquare}{m m m m o O{} O{} O{} O{}}{
\begin{equation}\begin{tikzcd}[ampersand replacement=\&] \label{#5}
  #1 \arrow{r}{#6} \arrow{d}{#7} \& #2 \arrow{d}{#8} \\
  #3 \arrow{r}{#9} \& #4
\end{tikzcd}\IfValueTF{#5}{\label{#5}}{} \end{equation}}
\NewDocumentCommand{\cartesiansquare}{m m m m O{} O{} O{} O{}}{
\begin{equation*}\begin{tikzcd}[ampersand replacement=\&]
  #1 \arrow{r}{#5} \arrow{d}{#6} \arrow[dr, phantom, "\square"] \& #2 \arrow{d}{#7} \\
  #3 \arrow{r}{#8} \& #4
\end{tikzcd} \end{equation*}}
\NewDocumentCommand{\cartesiansquarelabel}{m m m m m O{} O{} O{} O{}}{
\begin{tikzcd}[ampersand replacement=\&]
  #1 \arrow{r}{#6} \arrow{d}{#7} \arrow[dr, phantom, "\square"] \& #2 \arrow{d}{#8} \\
  #3 \arrow{r}{#9} \& #4
\end{tikzcd}\IfValueTF{#5}{\label{#5}}{}
}
\NewDocumentCommand{\triangleofspaces}{m m m O{} O{} O{}}{
\begin{tikzcd} [ampersand replacement=\&]
#1 \arrow{r}{#4} \arrow[bend right]{rr}{#5} \& #2 \arrow{r}{#6} \& #3
\end{tikzcd}}
\begin{document}
 
\title{Tangent curves to degenerating hypersurfaces}
\author{Lawrence Jack Barrott and Navid Nabijou}

\begin{abstract}
We study the behaviour of rational curves tangent to a hypersurface under degenerations of the hypersurface. Working within the framework of logarithmic Gromov--Witten theory, we extend the degeneration formula to the logarithmically singular setting, producing a virtual class on the space of maps to the degenerate fibre. We then employ logarithmic deformation theory to express this class as an obstruction bundle integral over the moduli space of ordinary stable maps. This produces new refinements of the logarithmic Gromov--Witten invariants, encoding the degeneration behaviour of tangent curves. In the example of a smooth plane cubic degenerating to the toric boundary we employ localisation and tropical techniques to compute these refinements. Finally, we leverage these calculations to describe how embedded curves tangent to a smooth cubic degenerate as the cubic does; the results obtained are of a classical nature, but the proofs make essential use of logarithmic Gromov--Witten theory.
\end{abstract}

\maketitle
\tableofcontents

\newpage

\null
\vfill

\begin{figure}[h]
\begin{tikzpicture}

\draw [thick] (0,0) -- (7,0);
\draw [fill] (1,0) circle[radius=2pt];
\draw (1,0) node[below]{\small{$\eta$}};
\draw [fill] (5,0) circle[radius=2pt];
\draw (5,0) node[below]{\small{$0$}};

\draw [thick] (0.5,0.5) -- (1.5,1) -- (1.5,2.5) -- (0.5,2) -- (0.5,0.5);
\draw [thick,blue] (0.75,0.75) -- (1.25,2);
\draw [blue] (1.05,1.5) node[left]{\small$2$};
\draw (1.5,1.6) node[right]{$9\left(\frac{3}{4}\right)$};
\draw [thick] (0.5,3) -- (1.5,3.5) -- (1.5,5) -- (0.5,4.5) -- (0.5,3);
\draw [thick,blue] (1,4) circle[radius=10pt];
\draw (1.5,4.1) node[right]{$27$};

\draw [thick] (4.5,0.5) -- (5.5,1) -- (5.5,2.5) -- (4.5,2) -- (4.5,0.5);
\draw [thick,blue] (4.75,0.75) -- (5.25,2);
\draw [blue] (5.05,1.5) node[left]{\small$2$};
\draw [thick] (4.5,3) -- (5.5,3.5) -- (5.5,5) -- (4.5,4.5) -- (4.5,3);
\draw [thick,blue] (4.6,3.2) -- (5.1,4.6);
\draw [thick,blue] (4.9,4.6) -- (5.4,3.7);

\draw [->] (2.3,4.1) -- (4.2,4.1);
\draw (3.45,4.1) node[above]{$18$};

\draw [->] (2.3,3.9) -- (4.2,2);
\draw (3.45,2.9) node[above]{$9$};

\draw [->] (2.8,1.6) -- (4.2,1.6);
\draw (3.45,1.6) node[above]{$9$};

\draw (5.5,4.1) node[right]{$18=18$};
\draw (5.5,1.6) node[right]{$\dfrac{63}{4} = 9+9\left(\frac{3}{4}\right)$};

\end{tikzpicture}	
\caption{Degree $2$ degeneration (\S \ref{sec: deg 2 degeneration}). Total invariant is: $27+9(3/4) = 18+63/4 = 135/4$.}
\end{figure}
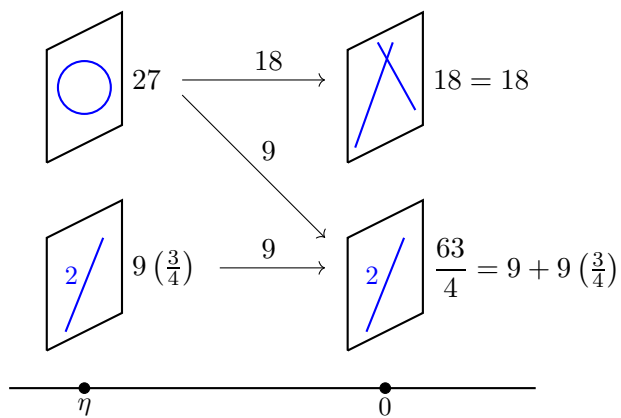\bigskip

\begin{figure}[h]
\begin{tikzpicture}

\draw [thick] (-2,-0.5) -- (7,-0.5);
\draw [fill] (0,-0.5) circle[radius=2pt];
\draw (0,-0.5) node[below]{\small{$\eta$}};
\draw [fill] (5,-0.5) circle[radius=2pt];
\draw (5,-0.5) node[below]{\small{$0$}};

\draw [thick] (-0.5,0) -- (0.5,0.5) -- (0.5,2) -- (-0.5,1.5) -- (-0.5,0);
\draw [thick,blue] (-0.25,0.25) -- (0.25,1.5);
\draw [blue] (0.05,1) node[left]{\small$3$};
\draw (0.5,1) node[right]{$9\left(\frac{10}{9}\right)$};
\draw [thick] (-0.5,2.9) -- (0.5,3.5) -- (0.5,5) -- (-0.5,4.5) -- (-0.5,2.9);
\draw [color=blue,thick] (0.4,4.7) to [out=260,in=0] (-0.2,3.7) to [out = 180,in=270] (-0.45,3.9) to [out=90,in=180] (-0.2,4.2) to [out=0, in=100] (0.4,3.5);

\draw (0.55,3.95) node[right]{$234$};

\draw [thick] (4.5,0) -- (5.5,0.5) -- (5.5,2) -- (4.5,1.5) -- (4.5,0);
\draw [thick,blue] (4.75,0.25) -- (5.25,1.5);
\draw [blue] (5.05,1) node[left]{\small$3$};
\draw [thick] (4.5,2.5) -- (5.5,3) -- (5.5,4.5) -- (4.5,4) -- (4.5,2.5);
\draw [thick,blue] (4.6,2.7) -- (5.1,4.1);
\draw [blue] (4.9,3.4) node[left]{\small$2$};
\draw [thick,blue] (4.9,4.1) -- (5.4,3.2);
\draw [thick] (4.5,5) -- (5.5,5.5) -- (5.5,7) -- (4.5,6.5) -- (4.5,5);
\draw [thick,blue] (4.6,5.2) -- (5.1,6.6);
\draw [thick,blue] (4.9,6.6) -- (5.4,5.7);
\draw [thick,blue] (4.55,5.35) -- (5.4,5.9);

\draw [->] (1.7,4.2) -- (4.2,5.7);
\draw (3,5.05) node[above]{$27$};

\draw [->] (1.7,3.9) -- (4.2,3.3);
\draw (3,3.65) node[above]{$162$};

\draw [->] (1.7,3.6) -- (4.2,1.3);
\draw (3,2.55) node[above]{$45$};

\draw [->] (2,1) -- (4.2,1);
\draw (3,1) node[below]{$9$};

\draw (5.5,6.1) node[right]{$27=27$};
\draw (5.5,3.6) node[right]{$162=162$};
\draw (5.5,1.1) node[right]{$55 = 45+9\left(\frac{10}{9}\right)$};

\end{tikzpicture}	
\caption{Degree $3$ degeneration (\S \ref{sec: deg 3 degeneration}). Total invariant is: $234+9(10/9) = 27+162+55 = 244$.}
\end{figure}
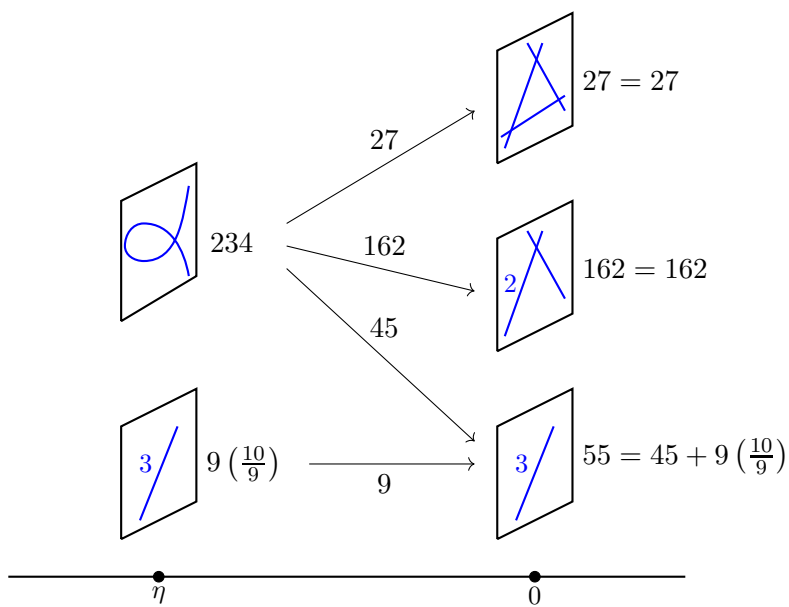

\vfill

\newpage

\section{Introduction}

\noindent Degeneration is a core technique in modern enumerative geometry. The basic idea is to degenerate a given target variety $X$ to a simpler one:
\begin{equation*} X \rightsquigarrow X_0. \end{equation*}
Under suitable conditions, enumerative invariants of the general fibre $X$ can be reconstructed from those of the central fibre $X_0$ \cite{Li2,AbramovichChenGrossSiebertDegeneration,ACGSPunctured,RangExpansions}. If the central fibre is sufficiently simple --- for example, if it decomposes into a union of toric varieties meeting transversely --- then its invariants can in turn be computed directly.

As such, degeneration is usually viewed solely as a method: it calculates the desired invariants on the general fibre in terms of invariants on some auxiliary central fibre. It has been tremendously successful at this task, underpinning many major results in the field: for a sample, see \cite{MaulikPandharipande,OkounkovPandharipandeMatrix,GPS,PandharipandePixtonQuintic}.

There is another aspect of degeneration, however, which has been mostly overlooked: the invariants of the central fibre provide \emph{refinements} of the invariants of the general fibre. This is because the moduli space associated to the central fibre typically has multiple virtual irreducible components. These refinements are geometrically meaningful: they provide information about how algebraic curves in $X$ degenerate as $X$ does.

In this paper, we investigate the geometric meaning of these refinements in the novel context of hypersurface degenerations. We examine rational curves with maximal contact order to a given hypersurface and study their behaviour as the hypersurface degenerates. Using the machinery of logarithmic Gromov--Witten theory, we explicitly calculate the aforementioned refinements, and use them to answer classical (and previously open) questions. Along the way, we establish a general degeneration formula for logarithmically singular families, and develop a new virtual push-forward technique which we exploit to calculate the virtual class on the central fibre.

\subsection{Logarithmically singular degeneration formula (\S \ref{sec: degeneration formula})} The degenerations we wish to study are logarithmically singular, and therefore fall outside the scope of the usual degeneration formula \cite{AbramovichChenGrossSiebertDegeneration, ACGSPunctured}. Our first main result is an extension of the degeneration formula to logarithmically singular families in genus zero.

\begin{customthm}{A}[Theorems~\ref{prop: POT over A1} and \ref{thm: degeneration formula}] \label{thm: degeneration formula introduction} Let $\Xcal$ be a logarithmically smooth scheme, and let $\Xcal \to \Aaff^{\! 1}$ be a projective, surjective and logarithmically flat morphism, where the base is equipped with the trivial logarithmic structure. Choose discrete data for a moduli space of genus zero logarithmic stable maps $\Kup^{\log}(\Xcal)$. Then, there is a perfect obstruction theory for the morphism
\begin{equation*} \Kup^{\log}(\Xcal) \to \Log \Mfrak_{0,n} \times \Aaff^{\! 1}	
\end{equation*}
defining a family of virtual fundamental classes on the fibres of $\Kup^{\log}(\Xcal) \to \Aaff^{\! 1}$
\begin{equation*} [\Kup^{\log}(\Xcal_t)]^{\virt} \end{equation*}
satisfying the conservation of number principle. If a fibre $\Xcal_t$ is logarithmically smooth, then this class coincides with the usual virtual fundamental class for the space of logarithmic stable maps; otherwise, the class we construct is new.\end{customthm}

We are most interested in cases where the general fibre $\Xcal_{t \neq 0}$ is logarithmically smooth but the central fibre $\Xcal_0$ is not. In this situation, $\Xcal \to \Aaff^{\! 1}$ will not be logarithmically smooth.

These hypotheses incorporate at least two distinct classes of examples. The first are hypersurface degenerations, in which $\Xcal = X \times \Aaff^{\! 1}$ with divisorial logarithmic structure induced by a degenerating family of hypersurfaces in $X$. This is the situation we focus on in this paper.

The second are degenerations of varieties, where we take all logarithmic structures to be trivial. This latter class includes the types of degenerations appearing in the classical degeneration formula, however equipped with the trivial logarithmic structure instead of the more standard logarithmic structure encoding the degeneration. It would be interesting to compare the central fibre contributions of Theorem~\ref{thm: degeneration formula introduction} to those which appear in the classical degeneration formula. We speculate that the virtual push-forward methods developed below can be adapted for this purpose, and plan to return to this in future work. We thank the anonymous referee for suggesting this additional direction.

Theorem~\ref{thm: degeneration formula introduction} produces a virtual class on the central fibre, but it does not provide a method to compute it. In the rest of this paper, we develop new tools to solve this problem. These tools are specific to the setting of hypersurface degenerations, though we anticipate that variants may be applied in other contexts.

\subsection{Hypersurface degenerations (\S \ref{sec: preliminaries})} Consider a smooth projective variety $X$ and a family of hypersurfaces with smooth total space
\begin{equation*} Z \subseteq X \times \Aaff^{\! 1} \end{equation*}
whose general fibre $Z_{t \neq 0}$ is smooth and whose central fibre $D=Z_0$ is singular. Our running example is a smooth cubic curve $E \subseteq \PP^2$ degenerating to the toric boundary $\Delta \subseteq \PP^2$. Let $\Xcal$ denote the divisorial logarithmic scheme:
\begin{equation*} \Xcal = (X\times \Aaff^{\! 1},Z). \end{equation*}
The projection $\Xcal \to \Aaff^{\! 1}$ defines a logarithmically flat family satisfying the hypotheses of Theorem~\ref{thm: degeneration formula introduction}. For $t \neq 0$, $\Xcal_t$ is the logarithmic scheme associated to the smooth pair $(X,Z_t)$. However, $\Xcal_0$ is typically \emph{not} the logarithmic scheme associated to the pair $(X,Z_0)$. In fact, $\Xcal_0$ is not even logarithmically smooth. Nevertheless, Theorem~\ref{thm: degeneration formula introduction} produces a virtual class
\begin{equation*} [\Kup^{\log}(\Xcal_0)]^{\virt}\end{equation*}
integrals over which coincide with the logarithmic Gromov--Witten invariants of the general fibre. This produces new information: the moduli space $\Kup^{\log}(\Xcal_0)$ typically has a decomposition into clopen substacks, indexed by appropriate combinatorial data. The individual contributions of these substacks provide refinements of the logarithmic Gromov--Witten invariants of the general fibre, and contain information concerning the degeneration behaviour of tangent curves. See \S \ref{sec: new numbers introduction} below for a detailed discussion of this in the case $(\PP^2,E) \rightsquigarrow (\PP^2,\Delta)$.

\subsection{Virtual push-forward formula (\S \ref{section computing virtual class})} Our goal is to compute the aforementioned refined invariants on $\Xcal_0$. To this end, we establish a powerful virtual push-forward formula. The logarithmic scheme $\Xcal_0$ has underlying variety $X$ and so there is a finite and representable morphism of moduli spaces:
\begin{equation*} \iota \colon \Kup^{\log}(\Xcal_0) \to \Kup(X).\end{equation*}
In general it is not known, or even expected, that there is a simple way to relate the two virtual classes via this map. The basic problem is that the perfect obstruction theory for the logarithmic moduli space is defined over a moduli space of logarithmically smooth curves, which has a different deformation theory to the usual moduli space of prestable curves (accounting for deformations of the logarithmic structure on the base).

We show that for certain hypersurface degenerations, the difference in deformation theories can in fact be controlled and described explicitly, resulting in a virtual push-forward formula:
\begin{customthm}{B}[Theorem~\ref{thm: virtual pushforward}] \label{thm: virtual pushforward introduction} Consider a logarithmically flat family $\Xcal \to \Aaff^{\! 1}$ arising from a hypersurface degeneration as above, and satisfying Assumptions~\ref{assumption: convexity of D} and \ref{assumption factoring through D}. Consider a moduli space of genus zero logarithmic stable maps to $\Xcal$ with maximal tangency at a single marked point. Then, there exists a vector bundle $F$ on $\Kup(X)$ which satisfies
\begin{equation}\label{eqn: virtual pushforward introduction} \iota_\star [\Kup^{\log}(\Xcal_0)]^{\virt} = \e(F) \cap [\Kup(X)]^{\virt}\end{equation}
and 
\begin{equation*} \e(F) \cdot \e(\LogOb) = \e(\pi_\star f^\star \OO_{X}(D)).\end{equation*}
\end{customthm}
Theorem~\ref{thm: virtual pushforward introduction} is obtained from the following result, which isolates the ``logarithmic part'' of the obstruction theory on the central fibre. This part is packaged in the $\LogOb$ term above. 
\begin{customthm}{C}[Theorem~\ref{thm: isolating LogOb}] \label{thm: isolating LogOb introduction} There is a perfect obstruction theory for the morphism $\psi$ in the diagram
\begin{center}
\begin{tikzcd}
\Kup^{\log}(\Xcal_0) \ar[r,"\varphi"] \ar[rr,bend right=20pt, swap, "\psi"] & \Log\Mfrak_{0,1} \ar[r] & \Mfrak_{0,1}
\end{tikzcd}
\end{center}
and an equality of virtual fundamental classes:
\[ \psi^![\Mfrak_{0,1}] = \varphi^![\Log\Mfrak_{0,1}].\]

\end{customthm}
 As far as we are aware, this is the first result giving a direct comparison between logarithmic and non-logarithmic obstruction theories. The difference is controlled by the Artin fan and encoded in the $\LogOb$ term (\S \ref{sec: isolating LogOb} and Definition \ref{def: LogOb}). We show that $\LogOb$ can be calculated explicitly in terms of line bundles associated to piecewise-linear functions on the tropicalisation (\S \ref{sec: computing LogOb}). We explain this in some detail, as we believe similar methods will be applicable in other contexts.

 We remark that there is no analogue of Theorem~\ref{thm: isolating LogOb introduction} on the general fibre. The central fibre moduli space has tightly constrained tropical geometry (\S \ref{sec: tropical moduli and minimal monoid}). This allows us to control the deformation theory of the morphism $\Log \Mfrak_{0,1} \to \Mfrak_{0,1}$ (Proposition~\ref{prop: Lchi in -1,1}), which we use to compare the obstruction theories. We expect our methods to be applicable whenever the tropical geometry is similarly constrained.

\subsection{New numbers, new conjectures (\S \ref{sec: component calculations})} \label{sec: contribution 3}\label{sec: new numbers introduction} The virtual push-forward formula \eqref{eqn: virtual pushforward introduction} reduces the calculation of the (refined) logarithmic Gromov--Witten invariants of $\Xcal_0$ to tautological integrals on a moduli space of ordinary stable maps. In the final section, we employ torus localisation to calculate these integrals in our main example, conjecture new hypergeometric formulae, and deduce consequences for classical enumerative geometry.

Recall that we consider a smooth cubic $E$ degenerating to the toric boundary $\Delta$. The moduli space of logarithmic stable maps to $\Xcal_0$ coincides with the moduli space of ordinary stable maps to $\Delta$ (Lemma \ref{lem: factor through D} and Proposition~\ref{prop: identification of moduli spaces}):
\begin{equation*} \Kup^{\log}(\Xcal_0) = \Kup(\Delta).\end{equation*}
This space decomposes into clopen substacks by fixing the degree of the stable map over each of the three components of $\Delta$. The virtual class $[\Kup^{\log}(\Xcal_0)]^{\virt}$ similarly decomposes. We thus obtain refinements of the maximal contact logarithmic Gromov--Witten invariants of $(\PP^2,E)$, indexed by length-$3$ partitions of the degree.

These refinements can be computed using Theorem~\ref{thm: virtual pushforward introduction}. In the equivariant setting, the class $\e(\LogOb)$ is invertible, and so \eqref{eqn: virtual pushforward introduction} can be rewritten as:
\begin{equation*} \iota_\star [\Kup^{\log}(\Xcal_0)]^{\virt} = \left( \dfrac{\e(\pi_\star f^\star \OO_{\PP^2}(3))}{\e(\LogOb)}\right) \cap [\Kup(\PP^2)].\end{equation*}
This is the formulation which we use to carry out our calculations. The localisation procedure is outlined in \S \ref{sec: localisation scheme}. A novel and crucial aspect is the computation of $\LogOb$ (see \S \ref{sec: computing LogOb}), as well as a recursive algorithm for cutting localisation graphs into simpler pieces (see \S \ref{sec: graph splitting}). These give methods for explicitly computing the logarithmic part of the obstruction theory in terms of evaluation and cotangent line classes, which we expect to be applicable more broadly.

The functoriality of virtual localisation allows us to separate out the individual component contributions and thus compute the desired refinements. We implement the localisation algorithm in accompanying Sage code, which we use to generate the refined invariants up to degree $8$. Organising by unordered multi-degree, the first few cases are:

\begin{center}
\begin{minipage}[t]{0.4\textwidth}
\begin{center}
\noindent \textbf{Degree $1$}\\
\begin{tabular}{| c | c |}
	\hline
	\textbf{Multi-degree} & \textbf{Contribution} \\
	\hline \hline
	$(1,0,0)$ & $9$ \\
	\hline \hline
	\textbf{Total:} & $9$ \\
	\hline
\end{tabular}\bigskip

\noindent \textbf{Degree $2$}\\
\begin{tabular}{| c | c |}
	\hline
	\textbf{Multi-degree} & \textbf{Contribution} \\
	\hline \hline
	$(2,0,0)$ & $63/4$ \\
	\hline
	$(1,1,0)$ & $18$ \\
	\hline \hline
	\textbf{Total:} & $135/4$ \\
	\hline
\end{tabular}\bigskip

\noindent \textbf{Degree $3$}\\
\begin{tabular}{| c | c |}
	\hline
	\textbf{Multi-degree} & \textbf{Contribution} \\
	\hline \hline
	$(3,0,0)$ & $55$ \\
	\hline
	$(2,1,0)$ & $162$ \\
	\hline 
	$(1,1,1)$ & $27$ \\
	\hline\hline
	\textbf{Total:} & $244$ \\
	\hline
\end{tabular}
\end{center}
\end{minipage}
\begin{minipage}[t]{0.4\textwidth}
\begin{center}
\noindent \textbf{Degree $4$}\\
\begin{tabular}{| c | c |}
	\hline
	\textbf{Multi-degree} & \textbf{Contribution} \\
	\hline \hline
	$(4,0,0)$ & $4,\!095/16$ \\
	\hline
	$(3,1,0)$ &  $936$ \\
	\hline 
	$(2,2,0)$ &  $1,\!089/2$ \\
	\hline
	$(2,1,1)$ & $576$ \\
	\hline\hline
	\textbf{Total:} & $36,\!999/16$ \\
	\hline
\end{tabular}\bigskip

\noindent \textbf{Degree $5$}\\
\begin{tabular}{| c | c |}
	\hline
	\textbf{Multi-degree} & \textbf{Contribution} \\
	\hline \hline
	$(5,0,0)$ & $34,\!884/25$ \\
	\hline
	$(4,1,0)$ &  $6,\!120$ \\
	\hline 
	$(3,2,0)$ &  $8,\!190$ \\
	\hline
	$(3,1,1)$ &  $4,\!680$ \\
	\hline
	$(2,2,1)$ & $5,\!040$ \\
	\hline\hline
	\textbf{Total:} & $635,\!634/25$ \\
	\hline
\end{tabular}
\end{center}
\end{minipage}
\end{center}
\noindent  Complete tables are given in \S \ref{sec: tables}. Based on these low-degree calculations, we conjecture general hypergeometric formulae for some of the ordered multi-degree contributions (Conjecture \ref{conj: hypergeometric conjecture}):
\begin{align} \label{eqn: introduction multi-degree d conjecture} \operatorname{C}_{\operatorname{ord}}(d,0,0) = \dfrac{1}{d^2\ } {4d-1 \choose d} \qquad & (d \geq 1), \\[8pt]
\operatorname{C}_{\operatorname{ord}}(d_1,d_2,0) = \dfrac{6}{d_1 d_2} {4d_1+2d_2-1 \choose d_1 - 1} {4d_2 + 2d_1 - 1 \choose d_2 - 1} \qquad & (d_1,d_2\geq 1).
\end{align}
Although we are unable to prove these conjectures, we provide strong theoretical evidence for them: we show in Proposition~\ref{prop: hypergeometric conjecture equivalent to combinatorial conjecture} that \eqref{eqn: introduction multi-degree d conjecture} is equivalent to the purely combinatorial Conjecture~\ref{conj: combinatorial conjecture}, which has been verified for large $d$.

Finally (\S \ref{sec: degenerations of curves}) we show how these component contributions can be leveraged to uncover the classical degeneration behaviour of embedded tangent curves to $E$, uncovering complete information for $d=2$ and $d=3$. The resulting degeneration pictures are presented at the start of this paper. 

\subsection{Future directions} There are a number of directions in which the techniques developed in this paper can be applied. All share a common theme: the calculation of geometrically meaningful refinements of logarithmic Gromov--Witten invariants, arising in the central fibre of a degenerating family.

A relatively straightforward extension would be to repeat the calculations of \S \ref{sec: component calculations} for the other toric del Pezzo surfaces. More difficult, but perhaps more interesting, would be to consider higher-dimensional targets (here some difficulties might arise from the fact that a general degeneration of a smooth divisor will not have smooth total space). Another interesting direction would be to consider degenerations to non-reduced divisors.

Even within the scope of our main example, it remains to unravel the degeneration pictures of  \S \ref{sec: degenerations of curves} for $d \geq 4$. As we discuss, this requires a refinement of our construction which separates the contributions of different torsion points. It is possible that such a refinement can be found by synthesising our techniques with the scattering diagram approach of \cite{Graefnitz}. We plan to investigate this jointly with T. Gr\"afnitz.

The logarithmically singular degeneration formula also applies to degenerations of a target $X \rightsquigarrow X_0$ with trivial logarithmic structure. This provides an alternative to the usual degeneration formula, since the central fibre moduli space is a space of \emph{ordinary} stable maps to $X_0$. It would be worthwhile to explore the geometric meaning of the central fibre refinements, and how they compare to the contributions in the classical degeneration formula.

\subsection{Relation to work of Gr\"afnitz.} \label{rmk: Graefnitz}
While this work was in progress, we became aware of the beautiful \cite{Graefnitz}, which also treats a degeneration of $(\PP^2,E)$. Although both the degeneration and the techniques involved are completely different to ours, there appears to be some concordance in the resulting numerical calculations.

Gr\"afnitz considers a simultaneous degeneration of the total space and the divisor (depending on two parameters $s$ and $t$) whose central fibre is a union of three copies of $\PP(1,1,3)$ glued along toric strata (see \cite[Example 1.5]{Graefnitz}). He then passes to an an affine slice by fixing $s = s_0 \neq 0$, and (after passing to a suitable resolution to ensure logarithmic smoothness) applies the degeneration formula, which expresses the invariant as a sum over rigid tropical curves. Our family, on the other hand, in which the ambient space does not change, is obtained by taking the orthogonal affine slice $t = t_0 \neq 0$:
\begin{center}
\begin{tikzpicture}[scale=0.6]
	\draw [thick,->] (-3,0) -- (3,0);
	\draw (3,0) node[right]{\small$s$};
	
	\draw[thick,->] (0,-3) -- (0,3);
	\draw (0,3) node[above]{\small$t$};
	
	\draw [blue,thick] (1,-3) -- (1,3);
	\draw (1,-2.7) node[right]{\small{G}};
	
	\draw [red,thick] (-3,1) -- (3,1);
	\draw (-2.5,1) node[above]{\small{B-N}};
	
\end{tikzpicture}
\end{center}
It is natural to try to marry the two approaches, by taking a further degeneration of each towards the origin. Heuristic arguments suggest that when we do this, our component contributions should correspond to Gr\"afnitz's tropical curve contributions, with the multi-degree given by ``projection to infinity'' (see \cite[Figure 3.4 and Remark 4.16]{Graefnitz}). For $d \leq 4$ one can enumerate the tropical curves by hand, and thus verify this correspondence (see \cite[\S 7.1.2]{Graefnitz}).

If this could be made precise, it would explain the simplicity of the formulae appearing in Conjecture~\ref{conj: hypergeometric conjecture}, as they would be the first order terms of a scattering process. In the other direction, however, it is not immediately clear how our conjectured formula \eqref{eqn: degree d1 d2 conjecture} could have been arrived at via scattering diagram considerations.

\subsection{User's guide} This paper is organised into four sections. Each section has a fixed set of hypotheses, stated clearly at the start. The sections are ordered from most general to most specific:
\begin{itemize}
\item \S \ref{sec: degeneration formula}: We consider logarithmically flat families $\Xcal \to \Aaff^{\! 1}$ with logarithmically smooth total space. The main result is the logarithmically singular degeneration formula (Theorem~\ref{thm: degeneration formula}).\medskip
\item \S \ref{sec: hypersurface degenerations}: We consider logarithmic families arising from hypersurface degenerations, fixing notation and verifying logarithmic flatness (Lemma~\ref{lemma p logarithmic flat}).\medskip
\item \S \ref{section computing virtual class}: We restrict to maximal contact invariants of hypersurface degenerations satisfying the additional Assumptions~ \ref{assumption: convexity of D} and \ref{assumption factoring through D}. The main result is the virtual push-forward formula (Theorem~\ref{thm: virtual pushforward}).\medskip
\item \S \ref{sec: component calculations}: We restrict to the main example $(\PP^2,E)\rightsquigarrow(\PP^2,\Delta)$. We calculate the refinements via localisation (\S \ref{sec: localisation scheme}), producing tables up to $d=8$ (\S \ref{sec: tables}), and conjecturing general hypergeometric formulae (\S \ref{sec: hypergeometric formulae}). Finally, we use the low-degree calculations to uncover the behaviour of embedded tangent curves (\S \ref{sec: embedded curves}).
\end{itemize}

\subsection{Logarithmic background} In this paper we assume familiarity with the basics of logarithmic geometry. We now provide a high-level overview of the subject. Details may be found in any modern reference, see e.g. \cite{AbramovichLog, AbramovichEtAlSkeletons,OgusBook}.

Conceptually, a logarithmic structure on a scheme $X$ is a collection of functions which are declared to be monomials. Formally, a logarithmic structure consists of a constructible sheaf of monoids $\overline{M}_X$ (which plays the role of an indexing sheaf for the monomials) and a line bundle and section $(\OO_X(\alpha),s_\alpha)$ associated to every section $\alpha$ of $\overline{M}_X$. This data is equivalently encoded in a morphism
\[ X \to \Acal_X\]
where $\Acal_X$ is the \emph{Artin fan} of $X$ \cite{OlssonLogStacks,BorneVistoli,AbramovichWiseBirational}. This is an irreducible zero-dimensional Artin stack, locally modelled on the quotient of a toric variety by its dense torus. The pairs $(\OO_X(\alpha),s_\alpha)$ on $X$ arise as pullbacks of certain universal pairs on $\Acal_X$.

The data of $\Acal_X$ is equivalent to the data of the tropicalisation $\Sigma_X$, which is an abstract cone complex generalising the fan of a toric variety \cite{CavalieriChanUlirschWise}. Piecewise-linear functions on $\Sigma_X$ are equivalent to global sections of $\overline{M}_X$, and so the association $\alpha \mapsto (\OO_X(\alpha),s_\alpha)$ generalises the correspondence between piecewise-linear functions and toric Cartier divisors. Much of logarithmic geometry may profitably be interpreted as a far-reaching generalisation of toric geometry.

\subsection{Conventions} We work over an algebraically closed field of characteristic zero, denoted $\kfield$. Given a morphism $X \to Y$ of stacks we will denote the derived dual of the relative cotangent complex by
\begin{equation*} \TT_{X|Y} := (\LL_{X|Y})^\vee \end{equation*}
and refer to it as the relative tangent complex. Similar conventions will apply in the logarithmic setting. The following table collects the most commonly-used symbols:
\begin{center}
\begin{longtable}{r c c p{0.8\textwidth}}
$X$ & & & a smooth projective variety\\
$Z$ & & & a smooth divisor in $X \times \Aaff^{\! 1}$ with smooth general fibre $Z_{t \neq 0}$ and singular central fibre $Z_0 \subseteq X$ \\
$D$ & & & the central fibre $Z_0$\\
$\Xcal$ & & & the variety $X \times \Aaff^{\! 1}$ equipped with the divisorial logarithmic structure corresponding to $Z$\\
$\Xcal_t$ & & & the variety $X$ with the logarithmic structured obtained by pulling back $M_{\Xcal}$ from $X \times \Aaff^{\! 1}$ to $X \times \{t\}$\\
$\Delta$ & & & the toric boundary in $\PP^2$\\
$Q$ & & & the toric monoid associated to a tropical stable map\\
$P$ & & & the free monoid associated to a nodal curve \\
$U_R$ & & & the affine toric variety $\Spec \kfield[R]$ associated to a toric monoid $R$ \\
$T_R$ & & & the dense torus in $U_R$ \\
$\Acal_R$ & & & the Artin cone $[U_R/T_R]$\\
$\Kup$ & & & a moduli space of ordinary stable maps \\
$\Kup^{\log}$ & & & a moduli space of (minimal) logarithmic stable maps \\
$\Mfrak_{0,1}$ & & & the moduli space of prestable curves \\
$\Log\Mfrak_{0,1}$ & & & the moduli space of (not-necessarily minimal) logarithmically smooth curves.
\end{longtable}
\end{center}

\subsection{Acknowledgements} It is a pleasure to thank Pierrick Bousseau and Tim Gr\"afnitz for many inspiring discussions. We also thank Dan~Abramovich, Michel~van~Garrel, Tom~Graber, Mark~Gross, Sanghyeon~Lee, Dhruv~Ranganathan and Helge~Ruddat for helpful conversations. We are grateful to the anonymous referee for numerous helpful suggestions and expositional advice, as well as the observation that our logarithmically singular degeneration formula might have applications beyond hypersurface degenerations.

Parts of this work were carried out during research visits at the National Centre for Theoretical Sciences Taipei, the University of Glasgow, the Mathematisches Forschungsinstitut Oberwolfach and Boston College, and it is a pleasure to thank these institutions for hospitality and financial support. L.J.B. was supported by the National Centre for Theoretical Sciences Taipei and Boston College. N.N. was supported by EPSRC grant EP/R009325/1.

\section{Degeneration formula for logarithmically singular families} \label{section constructing obstruction theory}\label{sec: degeneration formula}

\noindent Let $\Xcal$ be a logarithmic scheme and let $p \colon \Xcal \to \Aaff^{\! 1}$ be a projective surjective morphism, where the base is equipped with the \emph{trivial} logarithmic structure. We assume:
\begin{enumerate}
\item $\Xcal$ is logarithmically smooth over the trivial logarithmic point.
\item $p \colon \Xcal \to \Aaff^{\! 1}$ is logarithmically flat.	
\end{enumerate}
Since $\Aaff^{\! 1}$ has the trivial logarithmic structure, $p$ is logarithmically flat if and only if the morphism $\Xcal \to \Acal_\Xcal \times \Aaff^{\! 1}$ is flat in the usual sense \cite{GillamLogFlat}.

Under these assumptions we establish a degeneration formula, in genus zero, for the family of logarithmic schemes given by $p$. Note that $p$ may be logarithmically singular, and there are many interesting examples for which this is the case.

Choose arbitrary discrete data for a moduli space of genus zero logarithmic stable maps to $\Xcal$ and denote the resulting moduli space by $\Kup^{\log}(\Xcal)$. Since every stable map to $\Xcal$ must factor through a fibre of $p$, there is a proper morphism
\begin{equation*} q \colon \Kup^{\log}(\Xcal) \to \Aaff^{\! 1}\end{equation*}
whose fibre over a point $t \in \Aaff^{\! 1}$ is the moduli space of stable maps to the corresponding fibre of $p$:
\begin{equation*} \Kup^{\log}(\Xcal)_t = \Kup^{\log}(\Xcal_t).\end{equation*}
We begin by constructing a perfect obstruction theory for the family of moduli spaces $\Kup^{\log}(\Xcal)$ relative to the base $\Log\Mfrak_{0,n} \times \Aaff^{\! 1}$ (Theorem~\ref{thm: POT over A1}).

This produces a bivariant class for the morphism $q$, giving a family of virtual classes on the fibres satisfying the conservation of number principle. If a given fibre $\Xcal_t$ is logarithmically smooth, then the induced class on this fibre coincides with the usual virtual fundamental class for the moduli space of logarithmic stable maps to $\Xcal_t$. Otherwise, the class we construct is new (Theorem~\ref{thm: degeneration formula}).

Logarithmic families to which this result applies include hypersurface degenerations (see \S \ref{sec: hypersurface degenerations}), as well as target degenerations with trivial logarithmic structure.

\subsection{Perfect obstruction theory}
Since $\Xcal$ is logarithmically smooth, the space $\Kup^{\log}(\Xcal)$ admits a perfect obstruction theory over the moduli stack of not-necessarily-minimal logarithmically smooth curves. The latter stack is not smooth, but is nonetheless logarithmically smooth and irreducible of the expected dimension (in this case, $n-3$). It can be described \cite[Appendix A]{GrossSiebertLog} as
\begin{equation*} \Log\Mfrak_{0,n} \end{equation*}
where $\Mfrak_{0,n}$ is viewed as a logarithmic stack with divisorial logarithmic structure corresponding to the locus of singular curves, and $\Log$ denotes Olsson's moduli stack of logarithmic structures \cite{OlssonLogStacks}.

\begin{theorem}\label{prop: POT over A1} \label{thm: POT over A1} There exists a compatible triple of perfect obstruction theories for the diagram
\begin{center}
\begin{tikzcd}
\Kup^{\log}(\Xcal) \ar[r,"\rho"] \ar[rr,bend right=20pt] & \Log\Mfrak_{0,n}\times\Aaff^{\! 1} \ar[r] & \Log\Mfrak_{0,n}
\end{tikzcd}
\end{center}
in which $\EE_{\Kup^{\log}(\Xcal)|\Log\Mfrak_{0,n}}$ is the usual obstruction theory for the moduli space of logarithmic stable maps. The obstruction theory for $\rho$ is given explicitly by:
\[(\Rder\pi_\star \Lder f^\star \TTlog_{\Xcal/\Aaff^{\! 1}} ) ^\vee .\]
\end{theorem}
\begin{proof}

  Consider the following commutative diagram involving the universal logarithmic stable map:
\begin{center}
\bcd
\Ccal \ar[r,"f"] \ar[d,"\pi" left] & \Xcal \ar[d,"p"] \\
\Kup^{\log}(\Xcal) \ar[r,"q"] & \Aaff^{\! 1}.
\ecd
\end{center}
There is the following exact triangle on $\Xcal$:
\begin{equation}\label{eqn: pushforward log tangent} \TTlog_{\Xcal/\Aaff^{\! 1}} \to \Tlog_{\Xcal} \to p^\st \T_{\Aaff^{\! 1}} \xrightarrow{{[1]}}. \end{equation}
We have $\Rder\pi_\star f^\star p^\star \T_{\Aaff^{\! 1}}=q^\star \T_{\Aaff^{\! 1}}$ because $\Rder \pi_\star \OO_{\Ccal} = \OO_{\Kup^{\log}(\Xcal)}$ (this is the only place where the genus zero assumption is used). Applying $\Rder\pi_\st \Lder f^\st$ to \eqref{eqn: pushforward log tangent} gives an exact triangle
\begin{equation*} \Rder \pi_\star \Lder f^\star \TTlog_{\Xcal/\Aaff^{\! 1}} \to \Rder\pi_\star f^\star \Tlog_{\Xcal} \to q^\star \T_{\Aaff^{\! 1}} \xrightarrow{[1]} \end{equation*}
which we dualise to obtain:
\begin{equation*} q^\star \Omega_{\Aaff^{\! 1}} \to \EE_{\Kup^{\log}(\Xcal)|\Log\Mfrak_{0,n}} \to \EE_{\Kup^{\log}(\Xcal)|\Log \Mfrak_{0,n} \times \Aaff^{\! 1}} \xrightarrow{{[1]}}.\end{equation*}
The arguments of \cite[Construction 3.~13]{ManolachePull} then apply to show that $\EE_{\Kup^{\log}(\Xcal)|\Log \Mfrak_{0,n} \times \Aaff^{\! 1}}$ forms a perfect obstruction theory. Commutativity with the morphisms to the cotangent complexes is automatic, since $\Log \Mfrak_{0,n} \times \Aaff^{\! 1} \to \Log \Mfrak_{0,n}$ is smooth.
\end{proof}

\subsection{Conservation of number} With this at hand, we may consider for any $t \in \Aaff^{\! 1}$ the cartesian square:
\begin{center}
\begin{tikzcd}
\Kup^{\log}(\Xcal_t) \ar[r, hook,"j_t"] \ar[d,"\rho_t"] \ar[rd,phantom,"\square"] & \Kup^{\log}(\Xcal) \ar[d,"\rho"] \\
\Log\Mfrak_{0,n} \ar[r,hook,"i_t"] & \Log\Mfrak_{0,n}\times\Aaff^{\! 1}
\end{tikzcd}
\end{center}
The obstruction theory $\EE_\rho=\EE_{\Kup^{\log}(\Xcal)|\Log\Mfrak_{0,n}\times\Aaff^{\! 1}}$ defines \cite{ManolachePull} a refined virtual pullback morphism $\rho^!$ from which we obtain a virtual class
\begin{equation}\label{eq: virtual class} [\Kup^{\log}(\Xcal_t)]^{\virt} := \rho^! [ \Log\Mfrak_{0,n}] \end{equation}
(recall that $\Log\Mfrak_{0,n}$ is an irreducible stack of dimension $n-3$). Equivalently we have 
\begin{equation*}  [\Kup^{\log}(\Xcal_t)]^{\virt} = \rho_t^! [ \Log\Mfrak_{0,n}] \end{equation*}
where $\EE_{\rho_t}=\EE_{\Kup^{\log}(\Xcal_t)|\Log\Mfrak_{0,n}} = \Lder j_t^\st\EE_{\Kup^{\log}(\Xcal)|\Log\Mfrak_{0,n}\times\Aaff^{\! 1}}$ is the induced perfect obstruction theory (see \cite[Proposition 7.2]{BehrendFantechi}). The usual pull-push yoga shows that the family of classes \eqref{eq: virtual class} satisfies the conservation of number principle \cite[Proposition 3.9]{ManolachePush}.

\subsection{Degeneration formula} We now assume that the general fibre $\Xcal_{t \neq 0}$ is logarithmically smooth. In this setting, the conservation of number principle will ensure that the invariants of the central fibre $\Xcal_0$ coincide with those of the logarithmically smooth general fibre $\Xcal_{t \neq 0}$.

\begin{lemma} \label{lem: obstruction theory on fibre} For $t \in \Aaff^{\! 1}$ the obstruction theory $\EE_{\rho_t}$ is given by:
\begin{equation*} \EE_{\rho_t} = (\Rder \pi_\star \Lder f^\star \TT^{\log}_{\Xcal_t})^\vee. \end{equation*}\end{lemma}
\begin{proof} The only slightly delicate point here is to observe that $p\colon \Xcal \to \Aaff^{\! 1}$ is logarithmically flat by assumption, and so by \cite[(1.1 (iv))]{OlssonCotangent} we have a natural isomorphism:
\begin{equation*} \Lder i_t^\st \TTlog_p = \TTlog_{\Xcal_t}. \end{equation*}
The result then follows immediately by pull-push yoga. \end{proof}

Note in particular that $\EE_{\rho_t}$ is necessarily of perfect amplitude contained in $[-1,0]$. For arbitrary targets this does not hold: we have used the fact that $\Xcal_t$ sits in a logarithmically flat family $\Xcal \to \Aaff^{\! 1}$ with logarithmically smooth total space.

\begin{theorem}[Degeneration formula] \label{thm: degeneration formula}
Suppose that the general fibre $\Xcal_{t \neq 0}$ is logarithmically smooth. For all $t$ we have a virtual fundamental class
\begin{equation*} [\Kup^{\log}(\Xcal_t)]^{\virt} = \rho_t^! [ \Log\Mfrak_{0,n}] \end{equation*}
satisfying the conservation of number principle. For $t \neq 0$ this coincides with the usual virtual fundamental class for the logarithmically smooth target $\Xcal_t$.\end{theorem}
\begin{proof}This follows from Lemma~\ref{lem: obstruction theory on fibre}, which shows that the two obstruction theories on $\Kup^{\log}(\Xcal_t)$ coincide.\end{proof}
 
\begin{remark} In certain situations it may be possible to resolve the logarithmic singularities of the morphism $\Xcal \to \Aaff^{\! 1}$ and then apply the usual degeneration formula. However, this is not always possible: for instance, in the setting of hypersurface degenerations, $\Xcal$ admits no nontrivial logarithmic modifications. In any event, we prefer to work with the original family, where the connection to the classical enumerative geometry is more direct. \end{remark}

\subsection{Refined invariants} The newly-constructed virtual class for the central fibre
\begin{equation*} [\Kup^{\log}(\Xcal_0)]^{\virt} \end{equation*}
naturally decomposes as a sum over the connected components of the moduli space. More generally, any decomposition of $\Kup^{\log}(\Xcal_0)$ into a union of clopen substacks produces a corresponding decomposition of the virtual class. This produces refinements of the logarithmic Gromov--Witten invariants of the general fibre.

In most cases, this provides more information than is available on the general fibre. This additional information is geometrically meaningful: it describes how logarithmic stable maps degenerate in the family. We explore this circle of ideas, in the case of $(\PP^2,E) \rightsquigarrow (\PP^2,\Delta)$, in \S \ref{sec: component calculations}, and deduce previously-unknown results in classical enumerative geometry.

What we still require is a method for computing these component contributions. This is the original motivation for the development of the virtual push-forward technique, which we establish over the next two sections.

\section{Hypersurface degenerations}\label{sec: preliminaries}\label{sec: hypersurface degenerations}
\noindent We will apply the degeneration formula of the previous section to hypersurface degenerations. In this section, we describe the target geometry and establish basic facts about the moduli space. In the next section, we prove the virtual push-forward result which will allow us to compute the central fibre refinements.

\subsection{Geometric setup} Our target geometry consists of a static ambient variety $X$ --- assumed to be smooth and projective --- together with a one-parameter family of divisors in $X$
\begin{equation*} Z \subseteq X \times \Aaff^{\! 1} \end{equation*}
with smooth total space $Z$. We are interested in the situation where the general fibre $Z_{t \neq 0}$ is smooth but the central fibre $D=Z_0$ is singular.

\runningexample We adopt the following running example, which serves as our primary motivation (indeed, the calculations and conjectures of \S\ref{sec: component calculations} will focus entirely on this case). Take $X=\PP^2$ and consider a family $Z \subseteq \PP^2 \times \Aaff^{\! 1}$ of plane cubics whose general fibres $Z_{t\neq 0}$ are smooth genus one curves and whose central fibre is the toric boundary
\begin{equation*} D = Z_0 = \Delta = D_0 \cup D_1 \cup D_2 \subseteq \PP^2_{z_0 z_1 z_2}. \end{equation*}
Such families with smooth total space $Z$ are easy to construct, by choosing a sufficiently generic homogeneous cubic.

\subsection{Logarithmic structures}\label{sec: logarithmic structures} Given the geometric setup above, we introduce several associated logarithmic schemes. We will consistently use calligraphic letters to denote logarithmic schemes, and ordinary letters to denote ordinary schemes.

\begin{definition} Let $\Xcal$ denote the variety $X \times \Aaff^{\! 1}$ equipped with the divisorial logarithmic structure corresponding to $Z$. Note that since $Z$ is smooth, $\Xcal$ is logarithmically smooth over the trivial logarithmic point. \end{definition}

\begin{definition} For $t \in \Aaff^{\! 1}$ let $\Xcal_t$ denote the variety $X$ equipped with the pullback of the logarithmic structure $M_{\Xcal}$ along the inclusion $i_t \colon X = X \times \{ t \} \hookrightarrow X \times \Aaff^{\! 1}$.\end{definition}
 The ghost sheaf of $\Xcal_t$ is given by
\begin{equation*}\ol{M}_{\Xcal_t} = i_t^{-1} \ol{M}_{\Xcal} = \ul{\N}_{Z_t},\end{equation*}
that is, the constant sheaf with stalk $\N$ supported along the divisor $Z_t$. The line bundle and section associated to the generator of this monoid are:
\begin{equation*} (i_t^\st \OO_{X \times \Aaff^{\! 1}}(Z), i_t^\st s_{Z}) = (\OO_{X}(Z_t),s_{Z_t}).\end{equation*}
From this we have, using the Olsson/Borne--Vistoli characterisation of logarithmic structures \cite{OlssonLogStacks,BorneVistoli}:
\begin{lemma} For $t \neq 0$, $M_{\Xcal_t}$ is the divisorial logarithmic structure associated to the smooth divisor $Z_t \subseteq X$.\end{lemma}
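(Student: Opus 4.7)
The plan is to apply the Borne--Vistoli correspondence \cite{BorneVistoli} already cited in the excerpt, which identifies fine logarithmic structures whose characteristic sheaf is locally constant with stalk $\N$ along a closed subset with pairs $(L, s)$ of a line bundle and a global section whose vanishing locus equals that closed subset. The divisorial logarithmic structure associated to a smooth divisor $Y \subseteq X$ is by construction the one corresponding to the pair $(\OO_X(Y), s_Y)$.

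First I would check that the hypothesis of Borne--Vistoli is satisfied for $\Xcal_t$ when $t \neq 0$. Since $Z \subseteq X \times \Aaff^1$ is smooth and the composition $Z \hookrightarrow X \times \Aaff^1 \twoheadrightarrow \Aaff^1$ is smooth over the generic point, for any $t \neq 0$ the fibre $Z_t$ is a smooth divisor in $X$ and the inclusion $i_t \colon X \times \{t\} \hookrightarrow X \times \Aaff^1$ is transverse to $Z$. Consequently the closed subset $Z_t \subseteq X$ is smooth of codimension one, and the computation $\ol{M}_{\Xcal_t} = \ul{\N}_{Z_t}$ recorded in the excerpt exhibits the characteristic sheaf as locally isomorphic to $\ul{\N}$ on a smooth divisor, precisely as required.

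With this in hand, the proof is immediate: the two pieces of Borne--Vistoli data attached to $M_{\Xcal_t}$, namely the characteristic sheaf $\ul{\N}_{Z_t}$ together with the associated line-bundle-section pair $(\OO_X(Z_t), s_{Z_t})$ obtained by pulling back $(\OO_{X \times \Aaff^1}(Z), s_Z)$, are the same data that define the divisorial logarithmic structure on the smooth pair $(X, Z_t)$. The Borne--Vistoli equivalence therefore produces a canonical isomorphism of the two logarithmic structures.

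The only non-formal ingredient is the transversality of $i_t$ to $Z$ for $t \neq 0$, which is precisely what fails at $t = 0$: there $Z_0 = D$ is singular, $i_0$ is not transverse to $Z$, and the stalk description of the characteristic sheaf breaks down at the singular points of $D$. This is the reason that $\Xcal_0$ is not logarithmically smooth, and it foreshadows the need for the logarithmically singular degeneration formula developed in the remainder of the paper.
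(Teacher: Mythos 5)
Your proof is correct and follows essentially the same route as the paper: compute the ghost sheaf $\ol{M}_{\Xcal_t} = \ul{\N}_{Z_t}$ and the associated pair $(\OO_X(Z_t),s_{Z_t})$, then invoke the Borne--Vistoli characterisation to identify this with the divisorial logarithmic structure of the smooth divisor $Z_t$. The only cosmetic difference is that you re-derive smoothness of $Z_t$ via transversality, whereas the paper simply takes it from the standing assumption that the general fibre is smooth.
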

 For $t=0$, however, this is not the case. It is easy to see why: the ghost sheaf $\ol{M}_{\Xcal_0}$ is the constant sheaf $\ul{\N}_D$ supported on $D=Z_0$, whereas the ghost sheaf for the divisorial logarithmic structure will have higher rank along the singular locus of $D$. In the language of \cite{ChenLog}, $M_{\Xcal_0}$ is the rank one Deligne--Faltings pair associated to $(\OO_{X}(D),s_{D})$. 

The following result emphasises the mildly pathological nature of the logarithmic scheme $\Xcal_0$.

\begin{lemma}\label{lem: X0 not logarithmically smooth} The logarithmic scheme $\Xcal_0$ is not logarithmically smooth.\end{lemma}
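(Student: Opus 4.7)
My plan is to exhibit a singular point of $D$ at which the local structure of $\Xcal_0$ obstructs the existence of a smooth chart in the sense of Kato, thereby contradicting log smoothness. The existence of such a point is guaranteed by the hypothesis that $D$ is singular (for instance, the nodes of the toric boundary in the running example).

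Fix a singular point $p \in D$. By the computation recorded just above the lemma, the ghost stalk $\ol{M}_{\Xcal_0,p}$ equals the stalk of $\ul{\N}_D$ at $p$, which is $\N$. Suppose for contradiction that $\Xcal_0$ were logarithmically smooth over $\Speck$ (with trivial logarithmic structure). Kato's criterion then provides, étale locally near $p$, a chart $P \to M_{\Xcal_0}$ with $P = \ol{M}_{\Xcal_0,p} = \N$, such that the induced morphism of underlying schemes
\begin{equation*}
X \longrightarrow \Spec \kfield[\N] = \Aaff^1
\end{equation*}
is smooth in the classical sense at $p$.

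Now unravel the Deligne--Faltings description of $M_{\Xcal_0}$. After choosing a local trivialisation of $\OO_X(D)$ near $p$, the section $s_D$ corresponds to a local defining equation $f \in \OO_{X,p}$ of the divisor $D$, and the chart $\N \to M_{\Xcal_0}$ sends the generator $1 \in \N$ to (the class of) $f$. Hence the associated morphism $X \to \Aaff^1$ is precisely $x \mapsto f(x)$. Smoothness of this map at $p$ is equivalent to the condition $df|_p \neq 0$, which in turn says that $V(f) = D$ is smooth at $p$. This contradicts our choice of $p$, so no such chart can exist and $\Xcal_0$ fails to be logarithmically smooth.

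The only potentially delicate point is the application of Kato's criterion, where one must ensure that the chart can be chosen so that $P \to \ol{M}_{\Xcal_0,p}$ is the identity; this is standard for a fine sharp monoid structure, and I do not anticipate any genuine obstacle. All other steps reduce to unwinding definitions.
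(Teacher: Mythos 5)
Your proof is correct, but it takes a different route from the paper's. The paper works via logarithmic regularity: in characteristic zero it suffices to show $\Xcal_0$ is not log regular, and the Kato--Nizio\l{} criterion fails at a singular point $x \in D$ because the ideal generated by the non-invertible part of the log structure is $(g)$, so the quotient $\OO_{X,x}/(g) = \OO_{D,x}$ is not regular. You instead argue directly from Kato's chart criterion for log smoothness, using a chart neat at the singular point to produce a would-be smooth map $X \to \Spec \kfield[\N] = \Aaff^1$ cut out (up to a unit) by a defining equation of $D$, and then observe that smoothness at $p$ would force $D$ to be smooth there. Both arguments hinge on the same local model; yours avoids invoking log regularity at the cost of needing the neat-chart refinement of Kato's theorem, which is indeed standard here (the ghost stalk is $\N$, so $\ol{M}^{\gp}$ is free and one can localise any Kato chart at the face of elements mapping to units, split off the unit part, and retain smoothness of the induced map to the smaller toric model). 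Two small points worth tightening: a neat chart sends the generator $1$ not to $f$ itself but to $u f$ for some unit $u$, which is harmless since $d(uf)|_p = u(p)\,df|_p + f(p)\,du|_p = 0$ at a singular point of $D$; and rather than the characteristic-zero differential criterion you could simply note that the fibre of a smooth morphism through $p$ is smooth, while the fibre here is $D$ near $p$ --- this makes the final step characteristic-free.
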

\begin{proof} It is enough to show that $\Xcal_0$ is not logarithmically regular \cite[Theorem~3.5.1]{OgusBook}. Let $x \in D$ be a singular point of the divisor and let $g \in \OO_{X,x}$ be the germ of a local equation for $D$. A local chart for the logarithmic structure is given by $\N \to \OO_X, 1 \mapsto g$. Thus, using the notation of \cite[Definition 2.2]{Niziol}, the ideal generated by the non-invertible part of the logarithmic structure is
\begin{equation*} I_{X,x}\OO_{X,x} = (g) \vartriangleleft \OO_{X,x} \end{equation*}
and so $\OO_{X,x}/I_{X,x}\OO_{X,x} = \OO_{D,x}$, which is not regular since $x$ is a singular point of $D$. We thus conclude that $\Xcal_0$ is not logarithmically smooth at $x$.	
\end{proof}

\begin{remark} The structure morphism from $\Xcal_0$ to the trivial logarithmic point satisfies the conditions to be a \emph{generically logarithmically smooth family}, as defined in \cite[Definition 2.1]{FFRSmoothingToroidal}.
\end{remark}

\runningexample Recall that $Z \subseteq \PP^2 \times \Aaff^{\! 1}$ is a family of plane cubics with smooth general fibre and central fibre equal to the toric boundary. For $t \neq 0$ we get the divisorial logarithmic structure associated to the smooth plane curve $Z_t \subseteq \PP^2$, while for $t=0$ the logarithmic structure has local charts given by
\begin{equation*} \N \to \OO_{\PP^2}, 1 \mapsto s_\Delta \end{equation*}
where $s_\Delta$ is a local equation for the toric boundary $\Delta \subseteq \PP^2$. Away from the co-ordinate points $[1,0,0],[0,1,0],[0,0,1]$ the boundary is smooth and the logarithmic structure agrees with the divisorial logarithmic structure with respect to $\Delta$, but near such a co-ordinate point it differs since the stalk of the ghost sheaf is $\N$. Note that the tropicalisation of $\Xcal_0$ is simply a ray (see \cite[Appendix B]{GrossSiebertLog} for an introduction to tropicalisations of logarithmic schemes).\medskip

\subsection{Logarithmic flatness} Returning to the general setup, let $p$ denote the second projection:
\begin{equation*} p \colon X \times \Aaff^{\! 1} \to \Aaff^{\! 1}. \end{equation*}
We equip $\Aaff^{\! 1}$ with the trivial logarithmic structure. With the logarithmic structures on source and target fixed, there is a unique enhancement of $p$ to a logarithmic morphism $p \colon \Xcal \to \Aaff^{\! 1}$.

\begin{remark} Note that there is no way of enhancing $p$ to a logarithmic morphism if we equip $\Aaff^{\! 1}$ with its toric logarithmic structure; since the central fibre contains points where the logarithmic structure is trivial, such an enhancement would restrict to give a map from the trivial logarithmic point to the standard logarithmic point, which does not exist.\end{remark}
With this definition, the logarithmic schemes $\Xcal_t$ arise as fibre products
\bcd
\Xcal_t \ar[r] \ar[d] \ar[rd,phantom,"\square"] & \Xcal \ar[d,"p"] \\
\{ t \} \ar[r,hook] & \Aaff^{\! 1}
\ecd
(note that since the base morphism is strict, the underlying scheme of the fibre product is the fibre product of the underlying schemes). The following result will allow us to apply the degeneration formula established in \S \ref{section constructing obstruction theory}.

\begin{lemma} \label{lemma p logarithmic flat} The morphism $p$ is logarithmically flat, but not logarithmically smooth.\end{lemma}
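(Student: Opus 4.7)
The plan is to verify log flatness via Kato's local chart criterion, and to deduce the failure of log smoothness as an immediate corollary of Lemma~\ref{lem: X0 not logarithmically smooth}.

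First I would set up charts. Since $\Aaff^1$ carries the trivial log structure, take $Q = 0$ as its chart. Near any point of $Z$, take the divisorial chart $\N \to M_{\Xcal}$, $1 \mapsto s_Z$, where $s_Z$ is a local defining equation for the smooth divisor $Z$. The monoid map $0 \to \N$ is integral for free, and Kato's criterion reduces the log flatness of $p$ to ordinary flatness of the induced morphism of underlying schemes
\[
(p, s_Z) \colon X \times \Aaff^1 \longrightarrow \Aaff^1 \times \Spec \kfield[\N] \;\cong\; \Aaff^2,
\]
checked étale-locally near each point of $Z$. Away from $Z$ the log structure is trivial on both sides, and log flatness reduces to flatness of the smooth projection $p$, which is automatic.

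To verify flatness of $(p, s_Z)$, I would invoke miracle flatness: source and target are regular, hence Cohen--Macaulay, so it suffices to show that at each $x \in Z$ the fibre of $(p, s_Z)$ through $x$ has local dimension $\dim X - 1$. The fibre over $(t_0, s_0) = (p(x), s_Z(x))$ is the level set $\{s_Z(\cdot, t_0) = s_0\} \subseteq X \times \{t_0\}$. The critical observation is that $s_Z$ cannot vanish identically on any fibre $X \times \{t_0\}$ of $p$: otherwise some irreducible component of $Z$ would equal $X \times \{t_0\}$, contradicting the existence of a smooth general fibre $Z_{t \neq 0}$. Hence $s_Z(\cdot, t_0) - s_0$ is a non-zero element of the regular local ring $\Ocal_{X,x}$, thus a non-zero-divisor, so the fibre has pure codimension one in $X$ as required.

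For the failure of log smoothness, I would argue by strict base change. The inclusion $\{0\} \hookrightarrow \Aaff^1$ is strict (both carry trivial log structures), so the logarithmic fibre square defining $\Xcal_0$ coincides with the underlying fibre square. Since log smoothness is preserved by base change, log smoothness of $p$ would force $\Xcal_0 \to \Spec \kfield$ to be log smooth, directly contradicting Lemma~\ref{lem: X0 not logarithmically smooth}.

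The only real point requiring care is the fibre-dimension bookkeeping at singular points of $D = Z_0$, where $(p, s_Z)$ is most degenerate: in the running example the étale-local model near a singular point of $\Delta$ is $(u,v,t) \mapsto (t, uv + t g(u,v,t))$ with $g(0,0,0) \neq 0$, whose fibre over $(0,0)$ is the nodal $\{uv = 0\}$. The argument above handles this uniformly by exploiting smoothness of the total space $Z$ to rule out vertical irreducible components, so no casework at singularities of $D$ is needed.
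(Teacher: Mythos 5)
Your proposal is correct and takes essentially the same route as the paper: log flatness is verified by a local chart criterion reducing to classical flatness of a map to $\Aaff^1 \times \Spec \kfield[\N]$ (you use the plain Kato chart definition, the paper invokes Gillam's equivalent torsor-twisted version with the extra $\mathbb{G}_{\operatorname{m}}$ factor), established by miracle flatness once vertical components of $Z$ are excluded, and the failure of log smoothness is deduced by strict base change from Lemma~\ref{lem: X0 not logarithmically smooth}, exactly as in the paper. One small precision: over $t_0=0$ the non-vanishing of $s_Z$ on the fibre does not follow from smoothness of the general fibre alone, but it is immediate from the standing assumption that $Z_0=D$ is a divisor in $X$ (or from smoothness of $Z$ together with the fact that the component of $Z$ dominating $\Aaff^1$ is proper over $\Aaff^1$ and hence meets the central fibre).
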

\begin{proof} If $p$ was logarithmically smooth then by base change we would have that $\Xcal_0$ is logarithmically smooth, contradicting Lemma \ref{lem: X0 not logarithmically smooth}.

In order to show that $p$ is logarithmically flat, we will use the local chart criterion due to Gillam \cite[\S 1.1]{GillamLogFlat}. Choosing an open subset of $\Xcal$ of the form $\mathcal{U}=U \times \Aaff^{\! 1}$ for $U\subseteq X$ affine open, we have
\begin{equation*} \mathcal{U} = \Spec B \times \Aaff^{\! 1} = \Spec B[s] \end{equation*}
where $B$ is a regular ring. Taking $g(s)\in B[s]$ a local equation for $Z$, a local chart for the logarithmic structure is given by:
\begin{align*} P = \N & \to B[s] \\
1 & \mapsto g(s).\end{align*}
On the other hand, the base of $p$ is $\Aaff^{\! 1} = \Spec \kfield[s]$ with the trivial logarithmic structure, so a chart is given by the trivial map:
\begin{equation*} Q = 0 \to \kfield[s]. \end{equation*}
Then Gillam's local chart criterion says that $p$ is logarithmically flat if and only if the map
\begin{align*} \kfield[s][\N] = \kfield[s,w] & \to B[s,w^{\pm 1}] = B[\Z] \\
s & \mapsto s \\
w & \mapsto g(s) \cdot w \end{align*}
is flat. The corresponding map on schemes is
\begin{align*} \Spec B \times \Aaff^{\! 1}_s \times \mathbb{G}_{\op{m},w} \to \Aaff^{\! 1}_s \times \Aaff^{\! 1}_w \end{align*}
and the fibre over a point $(t,r) \in \Aaff^{\! 1}_s \times \Aaff^{\! 1}_w$ is
\begin{equation*} \mathrm{V}( g(t)\cdot w - r )\subseteq \Spec B \times \mathbb{G}_{\op{m},w} \end{equation*}
which always gives a divisor in $\Spec B \times \mathbb{G}_{\op{m},w}$. This is a morphism with equidimensional fibres between two smooth schemes, and hence is flat by miracle flatness.\end{proof}

\subsection{Logarithmic stable maps}\label{sec: logarithmic stable maps to Xcal} This paper is concerned with logarithmic Gromov--Witten theory in the genus zero maximal contacts setting. Consider therefore the following combinatorial data
\begin{equation*} \Gamma=(g,\beta,n,\alpha) = (0,\beta,1,(Z \cdot \beta)) \end{equation*}
where $\beta \in \mathsf{H}_2^+(X \times \Aaff^{\! 1}) = \mathsf{H}_2^+(X)$ is any curve class; $n=1$ is the number of marked points; and $\alpha=(Z \cdot \beta) = (Z_t \cdot \beta)$ (which does not depend on $t$) is the tangency order at the single marked point $x$. We may then consider the associated moduli space of logarithmic stable maps \cite{ChenLog,AbramovichChenLog,GrossSiebertLog}:
\begin{equation*} \Kup^{\log}(\Xcal).\end{equation*}
We suppress the fixed combinatorial data $\Gamma$ from the notation. Since $\Xcal$ is logarithmically smooth and $p \colon \Xcal \to \Aaff^{\! 1}$ is logarithmically flat, we may apply the degeneration formula (Theorems~\ref{prop: POT over A1} and \ref{thm: degeneration formula}) to conclude the existence of a perfect obstruction theory for:
\begin{equation*} \Kup^{\log}(\Xcal) \to \Log\Mfrak_{0,n}\times\Aaff^{\! 1}.\end{equation*}
This induces a family of virtual classes on the fibres of $q \colon \Kup^{\log}(\Xcal) \to \Aaff^{\! 1}$ satisfying the conservation of number principle.

\runningexample Given $\beta=d \in \mathsf{H}_2^+(\PP^2)$ the tangency order is $\alpha=(Z_t \cdot \beta) = (3d)$. For $t \neq 0$ the moduli space $\Kup^{\log}(\Xcal_t)$ consists of logarithmic stable maps to $(\PP^2,Z_t)$ with tangency order $3d$ at a single marking. It has virtual dimension zero, but in general is obstructed. The exception is the case $d=1$, when the moduli space consists of $9$ isolated points, corresponding to the $9$ flex lines of the smooth cubic $Z_t$. For $d \geq 2$ it contains higher-dimensional components corresponding to multiple covers and reducible curves.

\section{Obstruction bundle and virtual push-forward} \label{section computing virtual class}
\noindent From now on we make the following two assumptions concerning our hypersurface degeneration. Both are trivially satisfied in our main example.
\begin{assumption}\label{assumption: convexity of D} $\OO_X(D)$ is convex.\end{assumption}
\begin{assumption} \label{assumption factoring through the divisor}\label{assumption factoring through D} Consider any morphism $f \colon \PP^1 \to X$ of class $\beta^\prime$ with $0 < \beta^\prime \leq \beta$. If $f$ does not factor through $D$, then $f^{-1}(D)$ consists of at least two distinct points.\end{assumption}
\noindent The second assumption holds in particular whenever the components of $D$ are sufficiently positive and have empty total intersection.

Consider as above the moduli space of logarithmic stable maps to $\Xcal_0$. There is a forgetful morphism to the moduli space of ordinary stable maps to the underlying variety:
\begin{equation*} \iota \colon \Kup^{\log}(\Xcal_0) \to \Kup(X).\end{equation*}
In this section, we obtain a formula relating the virtual classes of these spaces:
\begin{theorem} \label{thm: virtual pushforward} There exists a vector bundle $F$ on $\Kup(X)$ which satisfies
\begin{equation}\label{eqn: virtual pushforward} \iota_\star [\Kup^{\log}(\Xcal_0)]^{\virt} = \e(F) \cap [\Kup(X)]^{\virt} \end{equation}
and
\begin{equation*} \e(F) \cdot \e(\LogOb) = \e(\pi_\star f^\star \OO_X(D)).\end{equation*}\end{theorem}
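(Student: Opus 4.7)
The plan is to exhibit a morphism of perfect obstruction theories along the closed embedding $\iota$ (Corollary \ref{cor: iota closed embedding}) whose cone defines the excess bundle $F$, and then to identify the pieces of this cone using the Artin fan of $\Xcal_0$. Following Manolache's functoriality of virtual classes \cite{ManolachePush}, such a morphism will immediately yield the virtual pushforward formula. We split the comparison into two independent contributions: a \emph{target contribution}, measuring the discrepancy between $\TTlog_{\Xcal_0}$ and $\T_X$; and a \emph{base contribution}, measuring the discrepancy between $\Log\Mfrak_{0,1}$ and $\Mfrak_{0,1}$. These will be captured respectively by $\pi_\star f^\star \OO_X(D)$ and $\LogOb$, and the Euler class identity in the statement reflects this decomposition.

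For the target contribution, we exploit the fact that the Deligne--Faltings log structure on $\Xcal_0$ is strictly pulled back along a characteristic morphism $\chi \colon \Xcal_0 \to \Acal = [\Aaff^1/\Gm]$ classifying the pair $(\OO_X(D), s_D)$. Since $\Acal$ is log étale over the trivial log point, $\TTlog_\Acal = 0$, so strictness of $\chi$ gives $\TTlog_{\Xcal_0} \simeq \T_{\Xcal_0/\Acal} = \T_{X/\Acal}$. Comparing with $\T_X$ yields the exact triangle
\[ \TTlog_{\Xcal_0} \to \T_X \to \chi^\star \T_\Acal \to \TTlog_{\Xcal_0}[1], \]
in which $\chi^\star \T_\Acal$ is quasi-isomorphic to the two-term complex $[\OO_X \xrightarrow{s_D} \OO_X(D)]$ coming from the presentation of $\Acal$ as a quotient stack. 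Applying $\Rder\pi_\star \Lder f^\star$ and using that $\Lder f^\star s_D = 0$ by the factorisation Lemma \ref{lem: factor through D}, together with the genus-zero vanishing $\Rder\pi_\star \OO_\Ccal = \OO$ and the convexity assumption (Assumption \ref{assumption: convexity of D}), the target piece reduces to the vector bundle $\pi_\star f^\star \OO_X(D)$, after cancelling a trivial summand against a matching contribution coming from the base side.

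The base contribution $\LogOb$ will be isolated in \S \ref{sec: isolating LogOb} by tracking the relative contribution of the morphism $\Log\Mfrak_{0,1} \to \Mfrak_{0,1}$, via the factorisation of $\iota$ through the moduli stack of maps to the Artin fan of $\Xcal_0$; its explicit description in terms of line bundles associated to piecewise-linear functions on the tropicalisation will then follow in \S \ref{sec: computing LogOb}, using crucially that the minimal monoid is a complete intersection in a free monoid (Proposition \ref{prop: minimal monoid}). Assembling both contributions into a single exact triangle of two-term complexes of bundles on $\Kup(X)$ yields the identity $\e(F) \cdot \e(\LogOb) = \e(\pi_\star f^\star \OO_X(D))$ by multiplicativity of Euler classes in short exact sequences. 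The main difficulty will be the clean separation of the base contribution: the log structure on the source curve present in $\Kup^{\log}(\Xcal_0)$ is strictly richer than the one induced from $\Mfrak_{0,1}$, and producing a morphism of obstruction theories which honestly isolates $\LogOb$ from the target piece requires careful bookkeeping of the Artin fan of $\Xcal_0$ and its pullback along the universal map.
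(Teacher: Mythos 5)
Your proposal is correct in outline and follows the same architecture as the paper: a compatible triple of obstruction theories over $\Mfrak_{0,1}$, with the discrepancy along $\iota$ split into a target piece $\pi_\star f^\star \OO_X(D)$ and a base piece $\LogOb$ coming from $\Log\Mfrak_{0,1} \to \Mfrak_{0,1}$, followed by Manolache-type functoriality and a K-theoretic identification of the excess bundle. The one place where you genuinely diverge is the derivation of the target-space triangle $\TTlog_{\Xcal_0} \to \T_X \to \OO_D(D) \xrightarrow{[1]}$ (Proposition \ref{prop: exact triangle on X0}): the paper obtains it extrinsically, from the logarithmically smooth total space $\Xcal$, logarithmic flatness of $p$ (Lemma \ref{lemma p logarithmic flat}) and an octahedral-axiom argument, whereas you obtain it intrinsically on $\Xcal_0$ via the strict classifying morphism $\chi \colon \Xcal_0 \to [\Aaff^1/\Gm]$ for the Deligne--Faltings pair $(\OO_X(D),s_D)$, using log \'etaleness of $[\Aaff^1/\Gm]$ to identify $\TTlog_{\Xcal_0} \simeq \TT_{X|\Acal}$ and the quotient presentation to identify $\chi^\star \T_{\Acal} \simeq [\OO_X \xrightarrow{s_D} \OO_X(D)] \simeq \OO_D(D)$. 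Both are valid; your route is more intrinsic and avoids the degeneration altogether (and your vanishing $f^\star s_D = 0$, genus-zero $\Rder\pi_\star\OO_{\Ccal}=\OO$ and convexity bookkeeping reproduces exactly the paper's $\chi(\Rder\pi_\star \Lder f^\star \OO_D(D)) = \pi_\star f^\star\OO_X(D) - \OO$, with the trivial summand absorbed into the $+\OO$ in Definition \ref{def: LogOb}), while the paper's route reuses the structures already set up for the degeneration formula.

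For the base piece you essentially defer to the paper's own mechanism (amplitude $[-1,1]$ of $\LL_{\Log\Mfrak_{0,1}|\Mfrak_{0,1}}$ via the complete-intersection presentation of the minimal monoid, tautological resolution by PL-function line bundles), so there is no difference there. Be aware that the step you compress as ``assembling both contributions into a single exact triangle'' is where the paper does real work: one must show via the octahedral axiom that the cone $\EE_{\Kup^{\log}(\Xcal_0)|\Kup(X)}$ has perfect amplitude exactly $[-1,-1]$ (combining the $[-1,1]$ and $[-2,0]$ bounds with $\HH^0(\LL_{\Kup^{\log}(\Xcal_0)|\Kup(X)})=0$ from Corollary \ref{cor: iota closed embedding}), and that the resulting bundle is pulled back from $\Kup(X)$ so that $\iota_\star \iota^! = \e(F)\cap(-)$ applies; since you flag this as the main difficulty and cite the right inputs, this is a matter of filling in detail rather than a gap in the approach.
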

\noindent Such a push-forward result necessitates the comparison of logarithmic and non-logarithmic obstruction theories; one of the main contributions of this work is to explain how such a comparison can be made, and made rather explicitly, via computations on the Artin fans. The difference between the obstruction theories is captured in the $\LogOb$ term (see Definition \ref{def: LogOb}). In \S \ref{sec: computing LogOb} we show how to express $\LogOb$ in terms of tautological line bundles on $\Kup(X)$.

Theorem \ref{thm: virtual pushforward} reduces the logarithmic Gromov--Witten theory of $\Xcal_0$ to tautological integrals on the moduli space of ordinary stable maps to $X$. In \S \ref{sec: component calculations} we use functorial virtual localisation to calculate these integrals in our main example, determining the component contributions which refine the logarithmic Gromov--Witten invariants.

\begin{remark} \label{rmk: push forward abuse} Once we localise, the equivariant class $\e(\LogOb)$ will be invertible (see Theorem \ref{thm: LogOb pure weight}). This allows us to rewrite \eqref{eqn: virtual pushforward} as:
\begin{equation} \label{eqn: virtual pushforward fraction} \iota_\star [\Kup^{\log}(\Xcal_0)]^{\virt} = \left( \dfrac{ \e(\pi_\star f^\star \OO_X(D))}{\e(\LogOb)} \right) \cap [\Kup(X)]^{\virt}.\end{equation}
It is this formulation which we will use to carry out our calculations.
\end{remark}

\subsection{Central fibre moduli: factorisation through $D$}\label{sec: central fibre moduli space} We now investigate the central fibre of our family of moduli spaces:
\begin{equation*} \Kup^{\log}(\Xcal_0). \end{equation*}
The following result is a direct consequence of Assumption \ref{assumption factoring through D} and the maximal contacts setup: 
\begin{lemma} \label{lem: factor through D} The underlying morphism of any logarithmic stable map to $\Xcal_0$ factors through the divisor $D \subseteq X$, and hence the morphism forgetting the logarithmic structures
\begin{equation*}\Kup^{\log}(\Xcal_0) \to \Kup(X)\end{equation*}
factors through $\Kup(D) \hookrightarrow \Kup(X)$.\end{lemma}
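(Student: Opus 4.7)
The plan is proof by contradiction, combining Assumption~\ref{assumption factoring through D} with the tropical balance condition that governs logarithmic stable maps. Suppose for contradiction that $f \colon C \to \Xcal_0$ is a logarithmic stable map whose underlying morphism $\bar f$ does not factor through $D$, so that some irreducible component $C_v$ of $C$ has $\bar f(C_v) \not\subseteq D$. I call such $C_v$ \emph{non-$D$-components}, the others \emph{$D$-components}, and let $V_1, V_0$ denote the respective subsets of vertices of the dual graph of $C$. Applied to the non-constant morphism $\bar f|_{C_v}\colon \PP^1 \to X$ of class $0 < \beta_v \leq \beta$, Assumption~\ref{assumption factoring through D} gives $|\bar f|_{C_v}^{-1}(D)| \geq 2$. (Constant non-$D$-components are trivially ruled out from carrying the marked point $x$, since $x$ has positive contact order and must map into $D$.)

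The first technical step is a local analysis at nodes: at any node $p$ between two non-$D$-components, $\bar f(p) \notin D$. In local coordinates $\{xy=0\}$ near $p$ and a local equation $s_D$ of $D$, write $f^\flat s_D = x^a y^b u$ with $u$ a unit; if $a \geq 1$ then $f^\flat s_D$ vanishes along the $\{x=0\}$ branch, forcing it to map into $D$ contrary to hypothesis, so $a = b = 0$. Since the logarithmic structure on $C$ is supported on special points, for $v \in V_1$ the set $\bar f|_{C_v}^{-1}(D)$ lies in the special locus of $C_v$, and by the local observation it consists exactly of the marked point (when it lies on $C_v$) together with nodes from $C_v$ to $D$-components.

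The decisive step invokes the tropical balance condition. The tropicalisation of $\Xcal_0$ is the single ray $\Sigma = \RR_{\geq 0}$ (equivalently, the tropicalisation of its Artin fan $[\Aaff^1/\Gm]$), with non-$D$ vertices mapping to the origin and $D$-vertices to interior points. At any non-$D$ vertex $v$ the minimal cone containing $\phi(v)$ is the trivial cone $\{0\}$; the balance condition for logarithmic stable maps \cite{GrossSiebertLog,AbramovichChenLog} therefore forces the sum of outgoing slopes over all flags at $v$ to vanish in $\Z$. Edges from $v$ to other non-$D$ vertices contribute slope $0$; edges to $D$-vertices contribute strictly positive integers (equal to the contact orders at the nodes); and a marked flag at $v$ contributes $\alpha = Z \cdot \beta > 0$. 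Since all contributions are non-negative and sum to zero, each individually vanishes: $v$ has no edges to $D$-components and does not carry the marked point. Combined with the previous paragraph, this forces $|\bar f|_{C_v}^{-1}(D)| = 0$, contradicting the lower bound of $2$. Hence $V_1 = \emptyset$ and $\bar f$ factors through $D$; the moduli-space factorisation then follows because factoring through the closed subscheme $D \subseteq X$ is a closed condition. I expect the main technical point to be carefully identifying the Artin-fan balance condition for the logarithmically singular target $\Xcal_0$ (Lemma~\ref{lem: X0 not logarithmically smooth}); once this is in place, the argument reduces to the elementary positivity observation above.
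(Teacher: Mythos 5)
Your overall plan (reduce to a non-constant component $C_v$ not mapped into $D$, observe that its intersections with $D$ occur only at special points, then derive a tropical contradiction) is the paper's, but your decisive step rests on a false form of the balancing condition. Balancing at a vertex $v$ only constrains the contact orders paired against sections of the ghost sheaf of $\Xcal_0$ at the generic point of the stratum containing $f(C_v)$; for a component \emph{not} contained in $D$ that stalk is trivial, and the correct statement at such a vertex is that the sum of the outgoing slopes equals $\deg f^\star \OO_X(D)|_{C_v} = D\cdot\beta_v$ (this is just the isomorphism $\OO_{\Ccal}(f^\flat 1)\cong f^\star\OO_X(D)$ restricted to $C_v$), which is strictly positive for exactly the components you are considering. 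So there is no contradiction at $v$ itself. A sanity check that your version proves too much: applied verbatim to the logarithmically smooth fibre $\Xcal_t=(X,Z_t)$, whose tropicalisation is also a single ray, it would show that every maximal-contact logarithmic stable map factors through $Z_t$, which is false.

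The actual contradiction must use the global shape of the genus-zero tropical curve together with the fact that there is only one marked leg: at $v$ there are at least two flags of positive slope (by Assumption~\ref{assumption factoring through D} and your correct observation that all intersections with $D$ are special points), at most one of which is or leads to the marking, so some branch attached to $v$ along a positively sloped edge contains no marking; at a vertex $u$ of that branch where $\mathsf{f}$ is maximal all outgoing slopes are $\leq 0$, while balancing there gives $\sum u_{(u,E)} = D\cdot\beta_u \geq 0$, forcing all slopes at $u$ to vanish and propagating the maximal value through the tree back to $v$, contradicting $\mathsf{f}(v)=0$. This is what the paper's phrase ``only one marked point and genus zero'' is doing. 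A secondary gap: the passage from the statement at geometric points to the factorisation $\Kup^{\log}(\Xcal_0)\to\Kup(D)$ is not merely ``a closed condition'' — over a non-reduced base the universal map need not factor through $D$ just because it does at every geometric point; the paper handles this by tropicalising over atomic neighbourhoods and reducing to the deepest stratum, and your argument needs some such step.
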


\begin{remark} Throughout we understand $\Kup(X)$ to mean the moduli space of ordinary stable maps with associated combinatorial data $\underline{\Gamma} = (g,\beta,n) = (0,\beta,1)$, induced by the combinatorial data $\Gamma$ for the logarithmic moduli space (see \S \ref{sec: logarithmic stable maps to Xcal}). \end{remark}

\begin{proof}[Proof of Lemma \ref{lem: factor through D}] We first claim that it is sufficient to prove the assertion on the level of closed points. For consider a general family $f \colon \Ccal \to \Xcal_0$ of logarithmic stable maps over a base logarithmic scheme $\mathcal{S}$. By passing to an open cover, we may assume that $\mathcal{S}$ is atomic \cite[\S 2.2]{AbramovichWiseBirational}, and we let $Q = \Gamma(\mathcal{S},\ol{M}_{\mathcal{S}})$. Tropicalising, we obtain a family of tropical stable maps
\begin{equation} \label{diag: tropical family}
\begin{tikzcd}
\sqC \ar[r,"\mathsf{f}"] \ar[d,"\mathsf{p}" left] & \RR_{\geq 0} \\
\sigma & 
\end{tikzcd}
\end{equation}
over the base cone $\sigma = Q^\vee_{\RR} = \Hom_{\N}(Q,\RR_{\geq 0})$. The fibre of $\mathsf{p}$ over an interior point of the base cone gives the combinatorial type of the ``most degenerate'' fibre in the family $\Ccal \to \mathcal{S}$; that is, the fibre over the unique deepest stratum of the atomic logarithmic scheme $\mathcal{S}$.

The map $f \colon C \to X$ factors through $D$ if and only if $f^\star s_D=0$, where $s_D$ is a section of $\OO_X(D)$ cutting out $D$. This section corresponds to the identity piecewise-linear function on the tropical target $\RR_{\geq 0}$, and so $f^\star s_D=0$ if and only if $\mathsf{f} \colon \sqC \to \RR_{\geq 0}$ factors through $\RR_{>0} \subseteq \RR_{\geq 0}$ (after removing the unique vertex of the cone complex $\sqC$). Moreover this property is preserved under edge contractions, so it suffices to check it for the most degenerate fibre. Fixing a closed point $s$ in the deepest stratum of $\mathcal{S}$ we see that the tropicalisation of $\Ccal_s \to \Xcal_0$ is the same as the tropicalisation \eqref{diag: tropical family} of the family. Therefore it is sufficient to show that $\Ccal_s \to \Xcal_0$ factors through $D$.

Consider therefore a logarithmic stable map over a closed logarithmic point, with underlying morphism $f \colon (C,x) \to X$. Suppose for a contradiction that there is some irreducible component $C^\prime \subseteq C$ not mapped inside $D$. Since we must have $x \mapsto D$ it follows that there must be an irreducible component of $C$ not mapped into $D$ and not contracted to a point, so we may assume that $f$ is not constant on $C^\prime$. But then by Assumption \ref{assumption factoring through D} it follows that there are at least two points of $C^\prime$ which are mapped into $D$ by $f$. It is easy to see from this that the resulting tropical map cannot satisfy the balancing condition \cite[\S 1.4]{GrossSiebertLog}, since there is only one marked point and the genus of the source curve is zero.\end{proof}

\subsection{Central fibre moduli: tropical moduli and minimal monoid} \label{sec: tropical moduli and minimal monoid} The previous lemma implies a number of surprising and extremely useful facts concerning the minimal monoids appearing in the logarithmic structure on $\Kup^{\log}(\Xcal_0)$. Fix a logarithmic stable map
\begin{equation*} f \colon \Ccal \to \Xcal_0 \end{equation*}
over a logarithmic point $(\Speck, Q)$, where $Q$ is the corresponding minimal monoid \cite{GrossSiebertLog}. Tropicalising, we obtain a tropical stable map
\begin{equation}\label{tropical map} \mathsf{f} \colon \sqC \to \mathbb{R} _{\geq 0}\end{equation}
over $\sigma = Q^\vee_{\RR}$. Since the image of $f$ is contained inside $D$ it follows that the image of $\mathsf{f}$ is contained inside $\RR_{>0} \subseteq \RR_{\geq 0}$. An example of such a tropical stable map is illustrated below:
\begin{center}
\begin{tikzpicture}

\draw[fill=blue] (-2,0.3) circle[radius=2pt];
\draw[->,color=blue] (-2,0.3) to (5,0.3);

\draw[fill] (3.5,2) circle[radius=2pt];
\draw (3.55,2.1) node[above]{\small$v_0$};

\draw[->] (3.5,2) -- (4.5,2);
\draw (4.45,2.05) node[above]{\small$\mathsf{x}$};

\draw (2,3) -- (3.5,2);
\draw (2.75,2.5) node[above]{\small$l_1$};

\draw[fill] (2,3) circle[radius=2pt];
\draw (2.05,3.1) node[above]{\small$v_1$};

\draw (2,3) -- (0.5,4);
\draw (1,3.8) node[right]{\small$l_2$};

\draw[fill] (0.5,4) circle[radius=2pt];
\draw (0.5,4) node[above]{\small$v_2$};

\draw[<->] (0.3,4) -- (-2,4);
\draw (-0.8,4) node[above]{\small$c_2$};

\draw (2,3) -- (-0.5,2.5);
\draw (0.75,2.75) node[above]{\small$l_3$};

\draw[fill] (-0.5,2.5) circle[radius=2pt];
\draw (-0.5,2.5) node[above]{\small$v_3$};

\draw[<->] (-0.7,2.5) -- (-2,2.5);
\draw (-1.3,2.5) node[above]{\small$c_3$};

\draw (1.5,1) -- (3.5,2);
\draw (2.5,1.5) node[above]{\small$l_4$};

\draw[fill] (1.5,1) circle[radius=2pt];
\draw (1.55,1.1) node[above]{\small$v_4$};

\draw[<->] (-2,1) -- (1.3,1);
\draw (-0.25,1) node[above]{\small$c_4$};

\end{tikzpicture}	
\end{center}
The tropical parameters are indicated in the diagram; they consist of the source edge lengths $l_1,l_2,l_3,l_4$ and the target offsets $c_2,c_3,c_4$.

In general, let $r$ denote the number of edges and $m$ the number of leaves of $\sqC$, the latter of which are in bijective correspondence with the target offset parameters.
\begin{proposition}\label{prop: minimal monoid} There is a natural quotient morphism $\N^{r+m} \to Q$ defining the minimal monoid $Q$. The relations are linearly independent, so the associated closed embedding of toric varieties
\begin{equation*} U_Q = \Spec \kfield[Q] \hookrightarrow \Spec \kfield[\N^{r+m}] = \Aaff^{r+m} \end{equation*}
is a complete intersection. Moreover, $\dim U_Q = r+1$.\end{proposition}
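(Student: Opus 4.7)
The plan is to describe the minimal monoid $Q$ explicitly via the combinatorics of the tropical stable map \eqref{tropical map}, and to read off both the generators and the defining relations from this description. Since $f$ has genus zero, the underlying graph $\sqC$ is a tree, so if $n$ denotes the number of vertices then $n = r + 1$. By the minimality characterisation of logarithmic stable maps \cite{GrossSiebertLog}, the dual cone $\sigma = Q^\vee_\RR$ is the moduli cone of tropical stable maps of the prescribed combinatorial type. A point of $\sigma$ is specified by an edge length $l_e \in \RR_{\geq 0}$ for each finite edge $e$ together with a vertex position $c_v \in \RR_{\geq 0}$ for each vertex $v$, subject to the edge-compatibility equations $c_{v'} - c_v = s_e l_e$ for every edge $e = vv'$, where $s_e \in \Z$ is the signed slope built into the combinatorial type.

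Next I would reduce this presentation to a minimal one. Since $\sqC$ is a tree, the position of any non-leaf vertex $v$ (in particular the vertex carrying the marked leg, which has degree at least two) is determined by the positions of the leaves and the edge lengths: iterating the edge-compatibility equations along the unique path from $v$ to a chosen base leaf $v_\star$ expresses $c_v$ as an integer linear combination of the $l_e$ and $c_{v_\star}$. Performing this elimination for all $n - m$ non-leaf vertices consumes $n - m$ of the $r$ edge-compatibility relations and leaves $r + m$ generators --- the $r$ edge lengths together with the $m$ leaf offsets --- plus $r - (n - m) = m - 1$ residual relations. These residual relations are naturally indexed by the non-base leaves: each leaf $v \neq v_\star$ contributes a single relation encoding the two equivalent ways of computing the position of an adjacent internal vertex, which rewrites in $\N^{r+m}$ as $c_v + \sum_{e \in A_v} |s_e| l_e = c_{v_\star} + \sum_{e \in B_v} |s_e| l_e$ for edge subsets $A_v, B_v$ determined by the signs of the slopes along the paths from $v$ and $v_\star$ to the chosen internal vertex.

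To conclude, one checks linear independence of these $m - 1$ relations: the relation indexed by $v$ is the only one in which the generator $c_v$ appears, so restricting the coefficient matrix to the columns $\{c_v : v \neq v_\star\}$ yields the $(m-1) \times (m-1)$ identity. Standard toric theory then promotes linear independence of lattice relations to a regular sequence of binomials in $\kfield[\N^{r+m}]$, exhibiting $U_Q \hookrightarrow \Aaff^{r+m}$ as a complete intersection of codimension $m - 1$ and dimension $r + 1$. The main obstacle is conceptual rather than computational: one must verify that this hands-on tropical presentation really computes the minimal monoid $Q$ of \cite{GrossSiebertLog}, with neither missing nor spurious relations. This amounts to confirming the completeness of the tropical moduli parameterisation, and relies crucially on the tree structure of $\sqC$ and on the one-dimensionality of the tropicalisation of $\Xcal_0$.
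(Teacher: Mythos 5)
Your combinatorial setup is essentially the paper's: the same $r+m$ generators (edge lengths plus leaf offsets), the same $m-1$ residual continuity relations obtained from the tree structure, and the same linear-independence observation (each non-base offset appears in exactly one relation), which does give $\operatorname{rank} Q^{\gp} = r+1$ and hence $\dim U_Q = r+1$. The gap is in the complete-intersection step, and it sits exactly where you place your ``main obstacle''. The principle you invoke --- that linear independence of the lattice relations makes the corresponding binomials a regular sequence cutting out $U_Q$ in $\Aaff^{r+m}$ --- is false in general: a basis of the (even saturated) relation lattice need not generate the lattice ideal of the embedding. For instance, the cone over the twisted cubic is a quotient of $\N^4$ by a rank-two relation lattice with basis $e_1-2e_2+e_3$ and $e_2-2e_3+e_4$, yet its toric ideal needs three generators and is not a complete intersection; the two binomials corresponding to the basis cut out a strictly larger scheme. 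Linear independence by itself only buys the dimension count.

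What makes the statement true here --- and this is the actual content of the paper's proof --- is that by \cite[\S 1.5]{GrossSiebertLog} the minimal monoid is the free monoid on the tropical parameters modulo the continuity relations \emph{followed by saturation of both the relations and the quotient}, and in this setting one can show the saturation steps are vacuous: because every continuity relation involves two distinct target offsets, each occurring with coefficient one, the relation set is saturated and the naive congruence quotient is already fine and saturated (this is also the source of Corollary \ref{cor: pre-saturated monoid}, $Q^{\mathrm{pre}}=Q$). Only once this is known is $\kfield[Q]$ literally $\kfield[\N^{r+m}]$ modulo the $m-1$ binomials, after which the complete-intersection claim follows from the codimension count. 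You flag this identification with the Gross--Siebert minimal monoid as a conceptual verification to be done, but it is not a formality to be outsourced to ``standard toric theory'' --- it is the proof, and the structural feature you already noticed (each relation containing its own private offset variable) is precisely the ingredient needed to carry it out. A minor further point: the vertex carrying the marking need not have valence at least two, though retaining its position as a redundant generator would only change the presentation cosmetically.
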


\begin{remark} The fact that $U_Q$ is regularly embedded in a smooth toric variety will turn out to be crucial when we come to analyse the logarithmic deformation theory in \S \ref{sec: isolating LogOb}. \end{remark}

\begin{proof} In \cite[\S 1.5]{GrossSiebertLog}, the minimal monoid $Q$ is obtained by taking the free monoid generated by the tropical parameters and quotienting by the tropical continuity relations, with the caveat that one is required to saturate both the relations and the resulting quotient monoid. We will show that in our setting these additional saturation steps are unnecessary.

Let $v_0$ denote the vertex of $\sqC$ containing the marking $\mathsf{x}$. Suppose as above that $\sqC$ has $r$ edges $e_1,\ldots,e_r$, with associated lengths $l_{e_1},\ldots,l_{e_r}$ and expansion factors $\alpha_{e_1},\ldots,\alpha_{e_r}$, and $m$ leaves $v_1,\ldots,v_m$ with associated target offsets $c_1,\ldots,c_m$. For each leaf $v_i$ we have
\begin{equation*} c_i + \sum_{e} \alpha_e  l_e = \mathsf{f}(v_0) \end{equation*}
where the sum runs over the edges $e$ connecting $v_i$ to $v_0$. A complete set of tropical continuity relations is thus given by equations of the form
\begin{equation} \label{eqn: tropical continuity relations} c_i + \sum_{e} \alpha_e l_e = c_j + \sum_{e^\prime} \alpha_{e^\prime} l_{e^\prime} \end{equation}
for $i,j \in \{1,\ldots,m\}$ distinct. This gives $m-1$ independent relations, and since each equation involves two distinct target offsets appearing without multiplicity, it follows that this set of equations is saturated, linearly independent, and that the quotient monoid is saturated.

Since $Q$ is obtained as a quotient of a free monoid of rank $r+m$ by $m-1$ linearly independent relations, it has rank $r+1$ and so $\dim U_Q = r+1$ as claimed.
\end{proof}

\begin{corollary}\label{cor: pre-saturated monoid} The pre-saturated monoid $Q^{\mathrm{pre}}$ (also called the coarse monoid: see \cite[\S 3.7]{ChenLog}) is automatically saturated, so $Q^{\mathrm{pre}}=Q$.
	\end{corollary}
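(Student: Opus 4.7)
The plan is to observe that the corollary is essentially a direct consequence of what was already established in the proof of Proposition~\ref{prop: minimal monoid}. By definition, $Q^{\mathrm{pre}}$ is the quotient of the free monoid $\N^{r+m}$ on the tropical parameters by the tropical continuity relations, while the minimal monoid $Q$ is its saturation. To establish the corollary I therefore need to verify that $Q^{\mathrm{pre}}$ is already fine and saturated, so that no saturation step is required.

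First I would show that the relations \eqref{eqn: tropical continuity relations} already generate a saturated sublattice of $\Z^{r+m}$. Each relation has the form $c_i - c_j + \sum_{e} \alpha_e l_e - \sum_{e^\prime} \alpha_{e^\prime} l_{e^\prime} = 0$, and since each target offset $c_k$ appears in exactly one relation with coefficient $\pm 1$, projection onto the coordinates $c_2,\dots,c_m$ realises the lattice of relations as a direct summand of $\Z^{r+m}$. Hence no saturation of the relations themselves is needed in passing from the free monoid to $Q^{\mathrm{pre}}$.

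Second I would verify that the quotient monoid itself is saturated. Using the $m-1$ relations to eliminate $c_2,\dots,c_m$ in favour of $c_1$ and the edge-length generators realises $Q^{\mathrm{pre}}$ as the intersection with $\Z^{r+1}$ of the rational polyhedral cone in $\RR^{r+1}$ cut out by the positivity conditions on $l_{e_1},\dots,l_{e_r}$ and $c_1$ together with the affine expressions obtained for $c_j$ with $j \geq 2$. Such an intersection is fine and saturated essentially by definition, which completes the argument.

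There is no real obstacle here; the only care required is to keep straight the two distinct senses of ``saturation'' (saturation of the relations and saturation of the quotient), both of which turn out to be unnecessary thanks to the particularly simple shape of \eqref{eqn: tropical continuity relations}. Ultimately this simplicity reflects the one-dimensional target tropicalisation $\RR_{\geq 0}$, which constrains the balancing and continuity data enough to rule out any torsion phenomena that would necessitate saturation.
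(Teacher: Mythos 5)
Your first step is fine and matches the paper's own observation: choosing the $m-1$ relations comparing leaf $1$ with leaf $j$, each $c_j$ ($j\geq 2$) occurs in exactly one of them with coefficient $\pm1$, so projection to the $(c_2,\dots,c_m)$-coordinates exhibits the relation lattice as a direct summand of $\Z^{r+m}$; this is the "the relations are saturated" part. The gap is in your second step. By definition $Q^{\mathrm{pre}}$ is the image of $\N^{r+m}$ in the quotient group, i.e.\ the submonoid generated by the classes of $l_{e_1},\dots,l_{e_r},c_1,\dots,c_m$; after eliminating $c_2,\dots,c_m$ it is the submonoid of $\Z^{r+1}$ generated by $e_{l_1},\dots,e_{l_r},e_{c_1}$ and the vectors $e_{c_1}+A_1-A_j$, where $A_i=\sum_e \alpha_e e_{l_e}$ runs over the path from $v_i$ to $v_0$. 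This is \emph{not} the set of lattice points of the cone cut out by your positivity conditions: that set is the set of integral points of the tropical moduli cone $\sigma=Q^\vee_{\RR}$, i.e.\ it computes the dual monoid rather than $Q^{\mathrm{pre}}$, and even read inside the same copy of $\Z^{r+1}$ the asserted equality fails on generators, since the class of $l_e$ for an edge $e$ lying on the path to $v_j$ but not to $v_1$ violates the inequality coming from $c_j$ (it evaluates to $-\alpha_e<0$). More fundamentally, "image of the free monoid $=$ lattice points of a cone" is precisely the saturation statement to be proven, so as written the step is circular as well as false.

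The expansion factors are the crux, and your closing remark that the shape of the relations alone rules out torsion phenomena is not correct. Take the paper's own worked example: two degree-one vertices $v_1,v_2$ attached to the contracted marked vertex $v_0$, so that $Q^{\mathrm{pre}}=\langle l_1,l_2,c_1,c_2\mid c_1+3l_1=c_2+3l_2\rangle$. In the group one has $3\,(c_1+l_1-l_2)=2c_1+c_2\in Q^{\mathrm{pre}}$, yet $c_1+l_1-l_2\notin Q^{\mathrm{pre}}$, since every representative $c_1+l_1-l_2+k\,(c_1+3l_1-c_2-3l_2)$ has a negative coordinate; equivalently, $\Spec\kfield[Q^{\mathrm{pre}}]$ is the hypersurface $x_1y_1^3=x_2y_2^3$ in $\Aaff^4$, which is singular in codimension one and hence not normal, whereas a saturated (toric) monoid algebra is normal. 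So when some $\alpha_e>1$ the quotient monoid is not the monoid of lattice points of any cone, your proposed identification cannot be repaired without using the coefficients $\alpha_e$, and indeed this example is in tension with the statement itself (with all $\alpha_e=1$ one gets $x_1y_1=x_2y_2$ and everything is fine, which is exactly where the subtlety sits). For comparison, the paper offers no separate argument for the corollary beyond the closing sentence of the proof of Proposition~\ref{prop: minimal monoid}, which asserts the saturation of the quotient from the fact that each relation involves two distinct offsets without multiplicity; your attempt makes the missing step explicit, but the justification given does not close it.
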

\begin{proof} Follows directly from the quotient description of $Q$ given above.\end{proof}

\begin{corollary} \label{cor: map Nr to Q injective} The natural composite $\N^r \to \N^{r+m} \to Q$ is injective.	
\end{corollary}
\begin{proof}Since both $\N^r$ and $Q$ are integral, it suffices to show that the map on groupifications is injective. The equations \eqref{eqn: tropical continuity relations} above imply that $Q^{\operatorname{gp}} = \Z^r \times \Z$ with final generator	given by a single offset parameter. The map on groupifications is then	 $\Z^r \hookrightarrow \Z^r \times \Z$ which is clearly injective.
\end{proof}

\subsection{Central fibre moduli: identification with maps to $D$.} The following result, an improvement upon Lemma~\ref{lem: factor through D}, will allow us to precisely describe the geometry of the central fibre moduli space. The basic idea is that the factorisation of the stable map through $D$ trivialises the problem of comparing the line bundles and sections encoded in the logarithmic structure, ensuring existence and uniqueness of logarithmic lifts.
\begin{proposition} \label{prop: identification of moduli spaces} The morphism
\begin{equation*} \iota \colon \Kup^{\log}(\Xcal_0) \to \Kup(D) \end{equation*}
forgetting the logarithmic structures is an isomorphism of stacks over schemes. \end{proposition}

\begin{proof} We will show that $\iota$ is \'etale and bijective on geometric points. Since it is also representable \cite[Theorem 1.2.1]{ChenLog}, this is enough to conclude that it is an isomorphism \cite[Tag 02LC]{Stacks}.

It follows immediately from Corollary \ref{cor: pre-saturated monoid} that $\iota$ is injective on geometric points (see \cite[\S 3.7]{ChenLog}). To show surjectivity, consider an ordinary stable map
\begin{equation*} f \colon C \to D \end{equation*}
over $\Speck$. Since we are in genus zero, there is a unique assignment of expansion factors to the nodes of $C$ such that the balancing condition is satisfied. This discrete data produces a minimal monoid $Q$ together with a morphism $\N^r \to Q$ where $r$ is the number of nodes of $C$.

Choose a logarithmic morphism $(\Speck,Q) \to (\Speck,\N^r)$ extending the morphism $\N^r \to Q$, and consider the logarithmically smooth curve
\begin{equation*} \Ccal \to (\Speck,Q) \end{equation*}
obtained by pulling back the minimal logarithmically smooth curve over $(\Speck,\N^r)$. The morphism $(\Speck,Q) \to (\Speck,\N^r)$ involves choices, but crucially since $\N^r \to Q$ is injective (Corollary~\ref{cor: map Nr to Q injective}) these choices  all differ by automorphisms of $(\Speck,Q)$. Hence we obtain a unique logarithmic curve $\Ccal$ up to isomorphism.

It remains to enhance the morphism $C \to D \to X$ to a logarithmic morphism $\Ccal \to \Xcal_0$. The morphism
\begin{equation*} f^\flat \colon f^{-1} \ol{M}_{\Xcal_0} \to \ol{M}_{\Ccal} \end{equation*}
on the level of ghost sheaves is uniquely determined by the tropical combinatorics. Letting $1\in\ol{M}_{\Xcal_0}$ denote the unique generator, we may interpret $f^\flat 1$ as a piecewise-linear function on $\sqC$ with values in $Q$ \cite[Remark 7.3]{CavalieriChanUlirschWise}. The associated line bundle $\OO_{\Ccal}(f^\flat 1)$ restricted to each component $C^\prime \subseteq C$ is a sum over adjacent nodes and markings, weighted by expansion factors. By the balancing condition this has the same degree, and hence is isomorphic to, $f^\star \OO_X(D)|_{C^\prime}$. Since $C$ is genus zero, these component-wise isomorphisms patch to give a global isomorphism:
\begin{equation*} f^\star \OO_X(D) \cong \OO_{\Ccal}(f^\flat 1).\end{equation*}
Moreover, the associated sections vanish on both sides. On the left-hand side, this is because the curve is mapped entirely inside the divisor. On the right-hand side, this is because $\sqC$ maps entirely inside $\RR_{>0}$ and so the piecewise-linear function is nonzero on every vertex of the tropical curve, which implies that the associated section vanishes, see e.g. \cite[Propositon~2.4.1]{RanganathanSantosParkerWise1}. This identification of line bundles and sections gives a logarithmic enhancement $f \colon \Ccal \to \Xcal_0$. This shows that $\iota$ is surjective on geometric points.

Finally, we need to show that $\iota$ is \'etale. Consider therefore a square-zero lifting problem for schemes:
\bcd
S \ar[r] \ar[d,hook] & \Kup^{\log}(\Xcal_0) \ar[d] \\
S^\prime \ar[r] \ar[ru,dashed] & \Kup(D).
\ecd
This is equivalent to the data of a logarithmic stable map to $\Xcal_0$ over $S$ and an ordinary stable map to $D$ over $S^\prime$:
\bcd
(C,M_C) \ar[r] \ar[d] & (\Xcal_0,M_{\Xcal_0}) & \qquad & C^\prime \ar[r] \ar[d] & D \\
(S,M_S), & \ & \ & S^\prime. &
\ecd
We begin by constructing the logarithmic structure on $S^\prime$. Note that the underlying topological spaces for $S$ and $S^\prime$ are identical. We will produce local charts for $S^\prime$ using the local charts for $S$. Replace $S$ by an open subset which admits a chart $Q \to \OO_S$, where $Q$ is the minimal monoid described in \S \ref{sec: tropical moduli and minimal monoid}. Our aim is to produce a natural lift:
\bcd
\, & \OO_{S^\prime} \ar[d] \\
Q \ar[r] \ar[ru,dashed] & \OO_S.
\ecd
Recall from Proposition~\ref{prop: minimal monoid} that $Q$ arises as a quotient $\N^{r+m} \to Q$ where $r$ is the number of nodes of the source curve and $m$ is the number of target offset parameters. The prestable curve $C^\prime \to S^\prime$ has an associated minimal logarithmic structure pulled back from $\Mfrak_{0,1}$, which defines a morphism:
\begin{equation*} \N^r \to \OO_{S^\prime}. \end{equation*}
On the other hand, the target offsets $c_1,\ldots,c_m$ must map to zero in $\OO_{S^\prime}$ since the curve is always mapped inside the divisor. We therefore obtain a unique map $\N^{r+m} \to \OO_{S^\prime}$ given simply as the composite:
\begin{equation*} \N^{r+m} \to \N^r \to \OO_{S^\prime}. \end{equation*}
The quotient $\N^{r+m} \to Q$ is defined by equations of the form \eqref{eqn: tropical continuity relations}. Since each $c_i$ is mapped to zero in $\OO_{S^\prime}$ it follows that both sides of these equations are sent to zero under the map $\N^{r+m} \to \OO_{S^\prime}$. Hence this map descends to the quotient, giving a chart
\begin{equation*} Q \to \OO_{S^\prime} \end{equation*}
as required. These local constructions are consistent with respect to generisation of the monoid, and therefore glue to produce a strict square-zero extension:
\begin{equation*} (S,M_S) \to (S^\prime,M_{S^\prime}).\end{equation*}
We now choose a logarithmic enhancement $(S^\prime,M_{S^\prime}) \to (\Mfrak_{0,1},M_{\Mfrak_{0,1}})$ of the morphism $S^\prime \to \Mfrak_{0,1}$, and pull back the universal curve to obtain a logarithmic enhancement of the source curve:
\begin{equation*} (C^\prime,M_{C^\prime}) \to (S^\prime,M_{S^\prime}),\end{equation*}
Again the logarithmic enhancement on the base involves choices, but since $\N^r \to Q$ is injective (Corollary~\ref{cor: map Nr to Q injective}) all such choices differ by the automorphisms of $M_{S^\prime}$. Hence we obtain a unique logarithmic curve up to isomorphism.

Finally, to obtain a logarithmic map $(C^\prime,M_{C^\prime}) \to \Xcal_0$ we first observe that $C^\prime$ and $C$ have the same underlying topological space, and therefore that there is an equality of ghost sheaves $\ol{M}_{C^\prime} = \ol{M}_C$. This gives us the logarithmic enhancement on the level of ghost sheaves, and completing this to a full logarithmic enhancement proceeds via similar arguments to those used above to prove that $\iota$ is surjective on geometric points. We have thus constructed a unique lift $S^\prime \to \Kup^{\log}(\Xcal_0)$, which completes the proof.\end{proof}

\begin{corollary}\label{cor: iota closed embedding} The morphism $\iota \colon \Kup^{\log}(\Xcal_0) \to \Kup(X)$ is a closed embedding.\end{corollary}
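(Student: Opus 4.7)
The plan is to exploit the factorisation provided by Lemma \ref{lem: factor through D} together with the isomorphism statement of Proposition \ref{prop: identification of moduli spaces}. Concretely, I would write
\begin{equation*}
\iota \colon \Kup^{\log}(\Xcal_0) \xrightarrow{\iota_D} \Kup(D) \xrightarrow{j_\star} \Kup(X),
\end{equation*}
where $\iota_D$ is the map of Proposition \ref{prop: identification of moduli spaces} (hence an isomorphism) and $j_\star$ is the map induced by post-composition with the closed embedding $j \colon D \hookrightarrow X$. Since a composition of closed embeddings (indeed of an isomorphism with a closed embedding) is a closed embedding, it suffices to establish that $j_\star \colon \Kup(D) \to \Kup(X)$ is a closed embedding.

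To verify this, I would argue that for any stable map $f \colon C \to X$ over a base scheme $T$, the locus in $T$ over which $f$ factors set-theoretically and scheme-theoretically through $D$ is cut out by a coherent ideal and is therefore closed. Explicitly, consider the ideal sheaf $\mathcal{I}_D \subseteq \OO_X$: the condition that $f^\star \mathcal{I}_D = 0$ on $C$ is the vanishing of a coherent sheaf, and since $C \to T$ is proper and flat this vanishing locus is a closed subscheme of $T$, representing the fibre product $\Kup(D) \times_{\Kup(X)} T$. Representability of $j_\star$ follows, and the universal property identifies $\Kup(D)$ with this closed subfunctor of $\Kup(X)$. Combining with the fact that $j_\star$ is manifestly a monomorphism (a map factors through $D$ in at most one way, since $j$ is a monomorphism) and proper (as the image of a proper map), one concludes it is a closed embedding.

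The main (very minor) point to check carefully is the compatibility of curve classes: one must confirm that the combinatorial data $\underline{\Gamma}=(0,\beta,1)$ used to define $\Kup(X)$ matches the pushforward from $\Kup(D)$, so that $j_\star$ lands in the correct component. This is immediate from our convention in \S \ref{sec: logarithmic stable maps to Xcal} that $\beta \in \mathsf{H}_2^+(X)$ is the class appearing in both moduli problems, together with the fact that every logarithmic stable map to $\Xcal_0$ has class $\beta$ by definition. Assembling these pieces, $\iota = j_\star \circ \iota_D$ is a closed embedding, completing the proof. I do not anticipate any serious obstacle; the corollary is essentially a formal consequence of Proposition \ref{prop: identification of moduli spaces} combined with the standard fact that the stable map moduli functor preserves closed embeddings of targets.
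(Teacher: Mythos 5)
Your proof is correct and follows essentially the same route as the paper, which treats the corollary as an immediate consequence of Proposition \ref{prop: identification of moduli spaces} together with the standard fact that $\Kup(D) \hookrightarrow \Kup(X)$ is a closed embedding (already implicit in Lemma \ref{lem: factor through D}). Your explicit verification that the factorisation locus is cut out by the vanishing of $f^\star \mathcal{I}_D \to \OO_C$, which is closed by properness and flatness of the universal curve, simply spells out that standard fact.
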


\begin{remark} The inclusion $\Kup(D) \hookrightarrow \Kup(X)$ furnishes the domain with a virtual fundamental class whose pushforward to $\Kup(X)$ is:
\[ \e(\pi_\star f^\star \OO_X(D)) \cap [\Kup(X)]^{\virt}.\]
Although the previous lemma establishes an isomorphism $\Kup^{\log}(\Xcal_0)=\Kup(D)$, the virtual classes on the two spaces do not coincide and even have different dimensions. Theorem~\ref{thm: virtual pushforward} relates the two classes, identifying the difference as $\e(\LogOb)$. \end{remark}

\subsection{Target geometry} We derive a fundamental exact triangle on the target space $\Xcal_0$, which will be useful later.
\begin{proposition}\label{prop: exact triangle on X0} We have the following exact triangle on $\Xcal_0$:	
\begin{equation}\label{eqn: important triangle on X0} \TTlog_{\Xcal_0} \to \T_X \to  \OO_D(D) \xrightarrow{[1]}. \end{equation}
\end{proposition}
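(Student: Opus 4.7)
The plan is to use the Artin cone of the Deligne--Faltings logarithmic structure together with standard exact triangles for tangent complexes. The rank-one Deligne--Faltings pair $(\OO_X(D),s_D)$ defining the logarithmic structure on $\Xcal_0$ corresponds, via the Borne--Vistoli correspondence, to a strict logarithmic morphism $\varphi \colon \Xcal_0 \to \Acal_{\N} := [\Aaff^1/\Gm]$, where $\Acal_{\N}$ carries its natural toric log structure.

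First I would show $\Tlog_{\Acal_{\N}} = 0$. The $\Gm$-torsor $\pi \colon \Aaff^1 \to \Acal_{\N}$ is strict, and in the induced triangle
\[
\TT_{\Aaff^1/\Acal_{\N}} \to \Tlog_{\Aaff^1} \to \pi^\star \Tlog_{\Acal_{\N}} \xrightarrow{[1]}
\]
the first map sends the canonical generator of $\TT_{\Aaff^1/\Acal_{\N}} = \op{Lie}(\Gm)$ to the logarithmic action vector field $z\partial_z$, which is the canonical generator of $\Tlog_{\Aaff^1}$. This map is thus an isomorphism, giving $\pi^\star \Tlog_{\Acal_{\N}}=0$, and hence $\Tlog_{\Acal_{\N}}=0$ by flat descent. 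Since $\varphi$ is strict we have $\TTlog_{\Xcal_0/\Acal_{\N}} = \TT_{X/\Acal_{\N}}$, so the triangle $\TTlog_{\Xcal_0/\Acal_{\N}} \to \TTlog_{\Xcal_0} \to \varphi^\star \Tlog_{\Acal_{\N}} \xrightarrow{[1]}$ collapses to yield $\TTlog_{\Xcal_0} \cong \TT_{X/\Acal_{\N}}$.

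I would then apply the exact triangle for $X \xrightarrow{\varphi} \Acal_{\N} \to \Spec\kfield$ to obtain $\TTlog_{\Xcal_0} \to \T_X \to \varphi^\star \TT_{\Acal_{\N}} \xrightarrow{[1]}$, so it remains to identify $\varphi^\star \TT_{\Acal_{\N}}$ with $\OO_D(D)$. Computing $\TT_{\Acal_{\N}}$ via the (non-logarithmic) $\Gm$-torsor triangle
\[
\OO_{\Aaff^1} \xrightarrow{z\partial_z} \T_{\Aaff^1} \to \pi^\star \TT_{\Acal_{\N}} \xrightarrow{[1]}
\]
realises $\TT_{\Acal_{\N}}$ as a two-term complex of $\Gm$-equivariant line bundles on $\Acal_{\N}$. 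Pulling back along $\varphi$, which sends the tautological line bundle to $\OO_X(D)$ and the universal section to $s_D$, gives $\varphi^\star \TT_{\Acal_{\N}} = [\OO_X \xrightarrow{s_D} \OO_X(D)]$ in degrees $[-1,0]$. Since $s_D$ is a non-zero-divisor, the standard sequence $0 \to \OO_X \xrightarrow{s_D} \OO_X(D) \to \OO_D(D) \to 0$ identifies this complex with $\OO_D(D)$ concentrated in degree $0$, and substitution yields the asserted triangle. The main obstacle is the careful bookkeeping of $\Gm$-equivariant line bundle conventions in the Borne--Vistoli correspondence, to confirm that the pullback indeed produces $\OO_X(D)$ and hence the correct twist $\OO_D(D)$; once this is pinned down, the identification follows formally from the standard exact triangles for tangent complexes.
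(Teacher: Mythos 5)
Your proposal is correct, but it takes a genuinely different route from the paper. The paper never works intrinsically on $\Xcal_0$: it starts from the short exact sequence $0 \to \Tlog_{\Xcal} \to \T_{X\times\Aaff^1} \to \OO_Z(Z) \to 0$ on the logarithmically smooth total space $\Xcal$, restricts along $i_0 \colon \Xcal_0 \hookrightarrow \Xcal$ (using $\Lder i_0^\star \OO_Z(Z) = \OO_D(D)$), invokes logarithmic flatness of $p$ (Lemma \ref{lemma p logarithmic flat}) to identify $\TTlog_{\Xcal_0} = \Lder i_0^\star \TTlog_p$, and assembles \eqref{eqn: important triangle on X0} from three triangles via the octahedral axiom. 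You instead use the Deligne--Faltings presentation of the central fibre itself, i.e.\ the strict classifying morphism $\varphi \colon \Xcal_0 \to \Acal_{\N} = [\Aaff^1/\Gm]$ of the pair $(\OO_X(D),s_D)$ (which the paper records in \S \ref{sec: logarithmic structures}), identify $\TTlog_{\Xcal_0} \simeq \TT_{X|\Acal_{\N}}$, and compute $\varphi^\star \TT_{\Acal_{\N}} \simeq [\OO_X \xrightarrow{s_D} \OO_X(D)] \simeq \OO_D(D)$, the last step using that $s_D$ is a non-zero-divisor. Your route is arguably more conceptual: it makes $\OO_D(D)$ visible as the deformation space of the section $s_D$ and is in the same Artin-fan spirit as the later computation of $\LogOb$; the paper's route instead recycles inputs it needs anyway for the degeneration formula (logarithmic flatness of $p$ and base change for $\TTlog_p$). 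One step you should tighten: transitivity for Olsson's logarithmic cotangent complex is not unconditional, so the triangles $\TTlog_{\Xcal_0|\Acal_{\N}} \to \TTlog_{\Xcal_0} \to \varphi^\star \Tlog_{\Acal_{\N}}$ and its analogue for $\Aaff^1 \to \Acal_{\N}$ need justification. The cleanest fix bypasses them entirely: by definition $\LLlog_{\Xcal_0} = \LL_{X|\Log(\Speck)}$, the classifying map factors through $\Acal_{\N}$, and $\Acal_{\N} \to \Log(\Speck)$ is \'etale, so $\LLlog_{\Xcal_0} \simeq \LL_{X|\Acal_{\N}}$ directly (your torsor computation of $\Tlog_{\Acal_{\N}}=0$ is correct but then unnecessary). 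After that, only the ordinary transitivity triangle for $X \to \Acal_{\N} \to \Speck$ and the standard identification of $\varphi^\star\TT_{\Acal_{\N}}$ as the two-term complex in degrees $[-1,0]$ are used, and your conclusion agrees with the paper's triangle \eqref{eqn: important triangle on X0}.
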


\begin{proof}
Consider the logarithmically smooth scheme $\Xcal=(X\times\Aaff^{\! 1},Z)$. We have a short exact sequence of sheaves:
\begin{equation*} 0 \to \Tlog_{\Xcal} \to \T_{X \times \Aaff^{\! 1}} \to \OO_Z(Z) \to 0.\end{equation*}
Pulling back along the inclusion $i_0 \colon \Xcal_0 \hookrightarrow \Xcal$ we obtain an exact triangle:
\begin{equation*} i_0^\star \Tlog_{\Xcal} \to i_0^\star \T_{X\times \Aaff^{\! 1}} \to \Lder i_0^\star \OO_Z(Z) \xrightarrow{[1]}.\end{equation*}
By taking the standard resolution of $\OO_Z(Z)$ it is easy to see that $\Lder i_0^\star \OO_Z(Z)=\OO_D(D)$, so in fact we have the exact triangle:
\begin{equation}\label{eqn: exact triangle 1} i_0^\star \Tlog_{\Xcal} \to i_0^\star \T_{X\times \Aaff^{\! 1}} \to \OO_D(D) \xrightarrow{[1]}.\end{equation}
Consider on the other hand the morphism $p \colon \Xcal \to \Aaff^{\! 1}$. Since $p$ is logarithmically flat, we have by base change \cite[1.1(iv)]{OlssonCotangent}:
\begin{equation*} \TTlog_{\Xcal_0} = \Lder i_0^\star \TTlog_{p}. \end{equation*}
The morphism $p$ is integral and therefore satisfies Olsson's condition $(T)$ \cite[1.3]{OlssonCotangent}. Hence $\TTlog_{p}$ fits into an exact triangle \cite[1.1(v)]{OlssonCotangent}:
\begin{equation*} \TTlog_p \to \Tlog_{\Xcal} \to p^\star \T_{\Aaff^{\! 1}} \xrightarrow{[1]}.\end{equation*}
Applying $\Lder i_0^\star$ to this, we obtain
\begin{equation*} \TTlog_{\Xcal_0} \to i_0^\star \Tlog_{\Xcal} \to i_0^\star \ p^\star \T_{\Aaff^{\! 1}} \xrightarrow{[1]}\end{equation*}
which we rotate to give:
\begin{equation} \label{eqn: exact triangle 2}
i_0^\star \Tlog_{\Xcal} \to i_0^\star \ p^\star \T_{\Aaff^{\! 1}} \to \TTlog_{\Xcal_0}[1] \xrightarrow{[1]}.\end{equation}
Finally, we have the following exact sequence, again better thought of as an exact triangle:
\begin{equation} \label{eqn: exact triangle 3} i_0^\star \T_{X \times \Aaff^{\! 1}} \to i_0^\star \ p^\star \T_{\Aaff^{\! 1}} \to \T_{X}[1] \xrightarrow{[1]}.\end{equation}
Since the logarithmic morphism $p \colon \Xcal \to \Aaff^{\! 1}$ factors through $X \times \Aaff^{\! 1}$ with the trivial logarithmic structure, the composite of the first arrow in \eqref{eqn: exact triangle 1} with the first arrow in \eqref{eqn: exact triangle 3} yields the first arrow in \eqref{eqn: exact triangle 2}. Applying the octahedral axiom to \eqref{eqn: exact triangle 1}, \eqref{eqn: exact triangle 2}, \eqref{eqn: exact triangle 3}, we obtain \eqref{eqn: important triangle on X0} as required.
\end{proof}

\begin{remark} The connecting homomorphism $i_0^\star \ p^\star \T_{\Aaff^{\! 1}} \to \T_X[1]$ in \eqref{eqn: exact triangle 3} is zero since the triangle is split. However, there does not appear to be any way of using this fact to infer a splitting of \eqref{eqn: important triangle on X0}.\end{remark}

\subsection{Isolating $\LogOb$}\label{sec: isolating LogOb} Having established the basic properties of the logarithmic target and the moduli space, we start working towards the virtual push-forward Theorem~\ref{thm: virtual pushforward}.

The basic difficulty in obtaining a virtual push-forward result for $\iota$ is that the obstruction theories for the source and the target are defined with respect to different bases: $\Log\Mfrak_{0,1}$ and $\Mfrak_{0,1}$, respectively. In this section, we address with this issue by producing a perfect obstruction theory for the morphism
\begin{equation*} \psi \colon \Kup^{\log}(\Xcal_0) \to \Mfrak_{0,1} \end{equation*}
thereby extracting the ``logarithmic part'' of the obstruction theory. 

\begin{theorem}\label{thm: isolating LogOb} There is a perfect obstruction theory for the morphism $\psi$ in the following diagram:
\begin{equation}\label{eqn: diagram Klog LogM M}
\begin{tikzcd}
\Kup^{\log}(\Xcal_0) \ar[r,"\varphi=\rho_0"] \ar[rr,bend right=20pt,"\psi" below] & \Log\Mfrak_{0,1} \ar[r] & \Mfrak_{0,1}.
\end{tikzcd}
\end{equation}
This obstruction theory fits into an exact triangle
\[ \varphi^\star \LL_{\Log \Mfrak_{0,1}|\Mfrak_{0,1}} \to \EE_{\Kup^{\log}(\Xcal_0)|\Mfrak_{0,1}} \to \EE_{\Kup^{\log}(\Xcal_0)|\Log\Mfrak_{0,1}} \xrightarrow{{[1]}} \]
and produces a virtual fundamental class on $\Kup^{\log}(\Xcal_0)$ which agrees with the virtual fundamental class constructed in \S \ref{sec: degeneration formula}:
\[ \psi^! [ \Mfrak_{0,1}] = \varphi^! [\Log \Mfrak_{0,1}].\]
\end{theorem}

\begin{remark} As is commonplace when dealing with Artin stacks of infinite type, the above result holds only after replacing $\Log\Mfrak_{0,1}$ by a suitable union of smooth charts, which we now describe. Recall that $\Log\Mfrak_{0,1}$ has a smooth cover by stacks of the form
\begin{equation}\label{eqn: local model for LogM01} V \times_{\Acal_P} \Acal_Q \end{equation}
where $V \to \Mfrak_{0,1}$ is a smooth chart, $P$ is a monoid giving a local chart $P \to \OO_V$ for the logarithmic structure on $\Mfrak_{0,1}$, and $P \to Q$ is any morphism of monoids \cite[Corollary~5.25 and Remark~5.26]{OlssonLogStacks}. 

The morphism $\varphi$ factors through the smooth cover consisting of those stacks of the form \eqref{eqn: local model for LogM01} in which $Q$ is the minimal monoid associated to a logarithmic stable map to $\Xcal_0$. To be more precise: given a closed point $\xi \in \Kup^{\log}(\Xcal_0)$ there is an associated minimal monoid $Q$ giving a local chart for the logarithmic structure around $\xi$, and another minimal monoid $P$ giving a local chart for the logarithmic structure around $\psi(\xi) \in \Mfrak_{0,1}$, together with a natural morphism $P \to Q$. After choosing a suitable atomic neighbourhood $V$ for $\psi(\xi)$, we have a local factorisation of $\varphi$ through the associated chart:
\begin{equation*} \psi^{-1}(V) \to V \times_{\Acal_P} \Acal_Q \to \Log\Mfrak_{0,1}.\end{equation*}
From now on, therefore, we replace $\Log\Mfrak_{0,1}$ by the image of the charts described above. \end{remark}

We begin with a fundamental result on the geometry of the morphism $\Log\Mfrak_{0,1} \to \Mfrak_{0,1}$. The proof makes crucial use of the properties of the minimal monoid $Q$ established in \S \ref{sec: tropical moduli and minimal monoid}.

\begin{proposition} \label{prop: Lchi in -1,1} $\LL_{\Log\Mfrak_{0,1}|\Mfrak_{0,1}}$ is of perfect amplitude contained in $[-1,1]$.\end{proposition}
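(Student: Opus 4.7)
The plan is to work étale-locally on $\Log\Mfrak_{0,1}$, reducing the bound on the cotangent complex to a computation on a morphism of Artin cones. By the étale cover described immediately before the statement, it suffices to bound the amplitude of $\LL_{\Acal_Q / \Acal_P}$, where $P \to Q$ is the morphism from the minimal monoid of the prestable curve to the minimal monoid of the logarithmic stable map. Indeed, since the projection from the chart $V \times_{\Acal_P} \Acal_Q$ onto $\Acal_Q$ is flat, the pullback of $\LL_{\Acal_Q/\Acal_P}$ recovers $\LL_{\Log\Mfrak_{0,1}|\Mfrak_{0,1}}$ on the chart, and perfect amplitude in a given range is an étale-local property.

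I would then pull back further along the smooth atlas $U_Q \to \Acal_Q$, which is a $T_Q$-torsor, so that the pullback $\pi^\star \LL_{\Acal_Q / \Acal_P}$ fits into the standard triangle
\begin{equation*}
\pi^\star \LL_{\Acal_Q / \Acal_P} \to \LL_{U_Q / \Acal_P} \to \LL_{U_Q / \Acal_Q} \xrightarrow{[+1]}.
\end{equation*}
The last term is a locally free sheaf of rank $r+1$ concentrated in degree $0$, coming from the torsor structure. The middle term can be further unwound by factoring $U_Q \to \Acal_P$ through the atlas $\Aaff^r \to \Acal_P$, so that $\LL_{\Aaff^r / \Acal_P}|_{U_Q}$ is a locally free sheaf in degree $0$ and everything reduces to controlling $\LL_{U_Q / \Aaff^r}$.

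The heart of the argument, and what exploits the careful restriction to our open substack, is the bound on $\LL_{U_Q / \Aaff^r}$. Proposition~\ref{prop: minimal monoid} supplies exactly the right input: the embedding $U_Q \hookrightarrow \Aaff^{r+m}$ is a regular closed immersion of codimension $m-1$, while the projection $\Aaff^{r+m} \to \Aaff^r$ onto the edge-length coordinates is smooth. The standard triangle for this composition then places $\LL_{U_Q / \Aaff^r}$ in perfect amplitude $[-1, 0]$, with $H^{-1}$ read off from the conormal bundle of the regular embedding. Chasing through the long exact cohomology sequence of the fibre triangle above shows that $\pi^\star \LL_{\Acal_Q/\Acal_P}$ has cohomology only in degrees $-1$, $0$ and $1$; since it is obtained by repeatedly taking cones of perfect complexes, it is perfect. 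Descent along the smooth surjection $\pi$ then gives the same conclusion for $\LL_{\Acal_Q/\Acal_P}$ itself.

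The main obstacle — or rather the hinge — is the complete intersection property furnished by Proposition~\ref{prop: minimal monoid}. For a generic toric chart $\Acal_P \to \Acal_Q$ the monoid $Q$ need not be obtained from a free monoid by linearly independent relations, in which case $U_Q$ would fail to be a regular embedding in $\Aaff^{r+m}$ and $\LL_{U_Q/\Aaff^{r+m}}$ would acquire cohomology in degrees strictly below $-1$, propagating to $\LL_{\Acal_Q/\Acal_P}$ and destroying the bound. It is precisely the combinatorial structure of the tropical continuity relations (each of the $m-1$ equations involving two distinct target offsets, each appearing with multiplicity one) that makes the argument go through, and this is what necessitates the preliminary restriction to the open substack of $\Log\Mfrak_{0,1}$ whose monoids arise from logarithmic stable maps to $\Xcal_0$.
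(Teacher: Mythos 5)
Your argument is correct and follows essentially the same route as the paper: both reduce \'etale-locally to the morphism of Artin cones $\Acal_Q \to \Acal_P$ and extract the amplitude bound from the regular embedding $U_Q \hookrightarrow \Aaff^{r+m}$ supplied by Proposition \ref{prop: minimal monoid}. The only difference is bookkeeping — you chain transitivity triangles on the atlas $U_Q$ and descend, while the paper writes explicit two- and three-term equivariant resolutions of $\LL_{\Acal_P}$ and $\LL_{\Acal_Q}$ and takes the cone — but the key input and the structure of the estimate are identical.
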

\begin{proof} The assertion is local, so we may replace $\Log\Mfrak_{0,1}$ and $\Mfrak_{0,1}$ by suitable smooth charts, as described above, to obtain a diagram:
\begin{equation}
\begin{tikzcd}
\Log\Mfrak_{0,1} \ar[r] \ar[d] \ar[rd,phantom,"\square"] & \Mfrak_{0,1} \ar[d] \\
\Acal_Q \ar[r] & \Acal_P.
\end{tikzcd}	
\end{equation}
The map $\Mfrak_{0,1} \to \Acal_P$ to the Artin fan is smooth because $\Mfrak_{0,1}$ is logarithmically smooth. Therefore by flat base change we have
$$\LL_{\Log\Mfrak_{0,1}|\Mfrak_{0,1}} = \LL_{\Acal_Q|\Acal_P}$$
(we have suppressed the pullback from the notation). Hence, we obtain an exact triangle
\begin{equation}\label{eqn: exact triangle for Lchi} \LL_{\Acal_P} \to \LL_{\Acal_Q} \to \LL_{\Log\Mfrak_{0,1}|\Mfrak_{0,1}} \xrightarrow{[1]}\end{equation}
(again suppressing pullbacks). Since the Artin cones are global smooth quotients, their cotangent complexes may be described easily in terms of the prequotients using the equivariance of the exact triangles associated to the quotient morphism, see e.g. \cite[p.4]{BehrendDeRham}. Starting with $P$, we have $P=\N^r$ where $r$ is the number of nodes of the source curve. Consequently the prequotient $U_P = \Aaff^r$ is smooth, and $\LL_{\Acal_P}$ has an explicit two-term resolution, which may be expressed as a $T_P$-equivariant complex of vector bundles on $U_P$:
\begin{equation*} \LL_{\Acal_P} = [ \Omega_{U_P} \to (\mathrm{Lie}\, T_P)^\vee \otimes \OO_{U_P} ].\end{equation*}
Thus, $\LL_{\Acal_P}$ has perfect amplitude contained in $[0,1]$. The monoid $Q$, on the other hand, is not always free, and as such $U_Q$ is not always smooth. However, we have seen in Proposition~\ref{prop: minimal monoid} that $U_Q$ is always regularly embedded inside an affine space $\Aaff^{r+m}$, and using this we obtain a natural three-term resolution of $\LL_{\Acal_Q}$ by vector bundles
\begin{equation*} \LL_{\Acal_Q} = [\mathrm{N}^\vee_{U_Q|\Aaff^{r+m}} \to \Omega_{\Aaff^{r+m}}|_{U_Q} \to (\mathrm{Lie}\, T_Q)^\vee \otimes \OO_{U_Q} ]\end{equation*}
demonstrating that $\LL_{\Acal_Q}$ has perfect amplitude contained in $[-1,1]$. From these two facts and the exact sequence \eqref{eqn: exact triangle for Lchi}, it is easy to show (by the same argument as in the proof of Theorem~\ref{prop: POT over A1}) that $\LL_{\Log\Mfrak_{0,1}|\Mfrak_{0,1}}$ has perfect amplitude contained in $[-1,1]$. \end{proof}

\begin{remark} The exact triangle \eqref{eqn: exact triangle for Lchi} obtained in the above proof allows us to express the cohomology sheaves of $\LL_{\Log\Mfrak_{0,1}|\Mfrak_{0,1}}$ in terms of pullbacks of toric line bundles from the Artin fan. By definition, these pullbacks are the line bundles associated to piecewise-linear functions on the tropicalisation of the moduli space. This will prove to be a crucial computational tool.\end{remark}

\begin{proof}[Proof of Theorem \ref{thm: isolating LogOb}] Consider the composite
\begin{equation*} \EE_{\Kup^{\log}(\Xcal_0)|\Log\Mfrak_{0,1}}[-1] \to \LL_{\Kup^{\log}(\Xcal_0)|\Log\Mfrak_{0,1}}[-1] \to \varphi^\star \LL_{\Log\Mfrak_{0,1}|\Mfrak_{0,1}}\end{equation*}
and denote the cone of this morphism by $\EE_{\Kup^{\log}(\Xcal_0)|\Mfrak_{0,1}}$. By the axioms of a triangulated category, we obtain a morphism of exact triangles:
\begin{equation}\label{eqn: morphism of exact triangles comparing obstruction theories}
\begin{tikzcd}
\varphi^\star \LL_{\Log\Mfrak_{0,1}|\Mfrak_{0,1}} \ar[r] \ar[d,equals] & \EE_{\Kup^{\log}(\Xcal_0)|\Mfrak_{0,1}} \ar[r] \ar[d] & \EE_{\Kup^{\log}(\Xcal_0)|\Log\Mfrak_{0,1}} \ar[d] \ar[r,"{[1]}"] & \, \\
\varphi^\star \LL_{\Log\Mfrak_{0,1}|\Mfrak_{0,1}} \ar[r] & \LL_{\Kup^{\log}(\Xcal_0)|\Mfrak_{0,1}} \ar[r] & \LL_{\Kup^{\log}(\Xcal_0)|\Log\Mfrak_{0,1}} \ar[r,"{[1]}"] & .
\end{tikzcd}
\end{equation}
We know that $\EE_{\Kup^{\log}(\Xcal_0)|\Log\Mfrak_{0,1}}$ is of perfect amplitude contained in $[-1,0]$, and Proposition~\ref{prop: Lchi in -1,1} shows that $\varphi^\star \LL_{\Log\Mfrak_{0,1}|\Mfrak_{0,1}}$ is of perfect amplitude contained in $[-1,1]$. From this it is easy to show (by the same argument as in the proof of Proposition \ref{prop: POT over A1}) that
\[ \EE_{\Kup^{\log}(\Xcal_0)|\Mfrak_{0,1}}\] 
is of perfect amplitude contained in $[-1,1]$. Several applications of the Four Lemma imply that the morphism
\begin{equation*} \EE_{\Kup^{\log}(\Xcal_0)|\Mfrak_{0,1}} \to \LL_{\Kup^{\log}(\Xcal_0)|\Mfrak_{0,1}}\end{equation*}
is surjective on $\HH^{-1}$ and an isomorphism on both $\HH^0$ and $\HH^1$. But
\[ \HH^1 \big(\LL_{\Kup^{\log}(\Xcal_0)|\Mfrak_{0,1}}\big) = 0\]
since the map is representable, so we conclude that
\[ \HH^1\big(\EE_{\Kup^{\log}(\Xcal_0)|\Mfrak_{0,1}}\big)=0.\]
Thus, $\EE_{\Kup^{\log}(\Xcal_0)|\Mfrak_{0,1}}$ is of perfect amplitude contained in $[-1,0]$, and defines a perfect obstruction theory which fits into an exact triangle:
\begin{equation*} \varphi^\star \LL_{\Log\Mfrak_{0,1}|\Mfrak_{0,1}} \to \EE_{\Kup^{\log}(\Xcal_0)|\Mfrak_{0,1}} \to \EE_{\Kup^{\log}(\Xcal_0)|\Log\Mfrak_{0,1}}.\end{equation*}
It remains to show that this induces the same virtual class as the obstruction theory over $\Log\Mfrak_{0,1}$. There is a morphism of vector bundle stacks over $\Kup^{\log}(\Xcal_0)$
\[ \mathfrak{E}_{\Kup^{\log}(\Xcal_0)|\Log\Mfrak_{0,1}} \xrightarrow{\kappa} \mathfrak{E}_{\Kup^{\log}(\Xcal_0)|\Mfrak_{0,1}}\]
and \eqref{eqn: morphism of exact triangles comparing obstruction theories} gives an identification:
\[ \mathfrak{C}_{\Kup^{\log}(\Xcal_0)|\Log\Mfrak_{0,1}} = \mathfrak{C}_{\Kup^{\log}(\Xcal_0)|\Mfrak_{0,1}} \times_{\mathfrak{E}_{\Kup^{\log}(\Xcal_0)|\Mfrak_{0,1}}} \mathfrak{E}_{\Kup^{\log}(\Xcal_0)|\Log\Mfrak_{0,1}}.\]
From this we obtain the equality:
\begin{align*}
	\varphi^![\Log\Mfrak_{0,1}] & = 0^!_{\mathfrak{E}_{\Kup^{\log}(\Xcal_0)|\Log\Mfrak_{0,1}}}[\mathfrak{C}_{\Kup^{\log}(\Xcal_0)|\Log\Mfrak_{0,1}}] \\
	& = 0^!_{\mathfrak{E}_{\Kup^{\log}(\Xcal_0)|\Log\Mfrak_{0,1}}} \kappa^! [\mathfrak{C}_{\Kup^{\log}(\Xcal_0)|\Mfrak_{0,1}}] \\
	& = 0^!_{\mathfrak{E}_{\Kup^{\log}(\Xcal_0)|\Mfrak_{0,1}}}[\mathfrak{C}_{\Kup^{\log}(\Xcal_0)|\Mfrak_{0,1}}] \\
	& = \psi^![\Mfrak_{0,1}].\qedhere
\end{align*}
\end{proof}

In anticipation of the computations to follow, we take this opportunity to introduce some useful notation.
\begin{definition}Given a bounded complex $\FF$, we write $\chi(\FF)$ to denote the alternating sum of its cohomology sheaves
\begin{equation*} \chi(\FF) = \sum_{i \in \mathbb{Z}} (-1)^i \cdot \HH^i(\FF) \end{equation*}	
viewed as a class in K-theory. This mild abuse of notation should not lead to any confusion.
\end{definition}

\begin{definition}\label{def: LogOb} We let $\LogOb$ denote the following K-theory class on $\Kup^{\log}(\Xcal_0)$:
\begin{equation*} \LogOb = \chi(\varphi^\star \TT_{\Log\Mfrak_{0,1}|\Mfrak_{0,1}}) + \OO.\end{equation*}
Locally, therefore, $\LogOb$ may be expressed as:
\begin{equation}\label{eqn: basic formula for LogOb} \LogOb = \chi(\TT_{\Acal_Q}) - \chi(\TT_{\Acal_P}) + \OO.\end{equation}
The extra trivial bundle term is to account for the difference in rank between the monoids $Q$ and $P$ (see \S \ref{sec: computing LogOb}). We note that $\LogOb$ has constant rank $1$.\end{definition}
The computations of the following section will only depend on the Euler class of $\LogOb$. This is why we focus on the K-theory class.

\subsection{Building the obstruction bundle} Having constructed an obstruction theory for $\Kup^{\log}(\Xcal_0)$ over $\Mfrak_{0,1}$, we now compare it to the obstruction theory for the space of ordinary stable maps.
\begin{proposition}\label{prop: pushforward compatible triple} There exists a compatible triple of perfect obstruction theories for the diagram
\begin{equation} \label{diag: final triangle of spaces}
\begin{tikzcd}
\Kup^{\log}(\Xcal_0) \ar[r,"\iota"] \ar[rr,bend right=20pt,"\psi" below] & \Kup(X) \ar[r] & \Mfrak_{0,1}
\end{tikzcd}
\end{equation}
in which $\EE_{\Kup^{\log}(\Xcal_0)|\Kup(X)}$ is given by a vector bundle supported in degree $-1$.\end{proposition}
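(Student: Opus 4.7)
The plan is to build the compatible triple by splicing two previously established exact triangles: the $\Rder\pi_\star\Lder f^\star$-pullback of the target triangle of Proposition~\ref{prop: exact triangle on X0}, and the $\LogOb$-triangle of Theorem~\ref{thm: isolating LogOb}. The desired relative obstruction theory $\EE_{\Kup^{\log}(\Xcal_0)|\Kup(X)}$ will appear as a cone; the main task is to recognise this cone as an honest vector bundle in degree $-1$.

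First I would apply $\Rder\pi_\star\Lder f^\star$ to the exact triangle $\TTlog_{\Xcal_0} \to \T_X \to \OO_D(D) \xrightarrow{[1]}$ and dualise, producing the comparison triangle
\[
  (\Rder\pi_\star\Lder f^\star \OO_D(D))^\vee \to \iota^\star\EE_{\Kup(X)|\Mfrak_{0,1}} \to \EE_{\Kup^{\log}(\Xcal_0)|\Log\Mfrak_{0,1}} \xrightarrow{[1]}
\]
on $\Kup^{\log}(\Xcal_0)$. To compute the leftmost term I use that $f$ factors through $D$ by Lemma~\ref{lem: factor through D}: the multiplication-by-$s_D$ map $\OO_C \to f^\star\OO_X(D)$ coming from $0 \to \OO_X \to \OO_X(D) \to \OO_D(D) \to 0$ is identically zero, so $\Lder f^\star\OO_D(D) \cong f^\star\OO_X(D) \oplus \OO_C[1]$. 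Convexity (Assumption~\ref{assumption: convexity of D}) and the genus-zero hypothesis then give
\[
  \Rder\pi_\star\Lder f^\star \OO_D(D) = \pi_\star f^\star\OO_X(D) \oplus \OO[1].
\]

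I would then splice the comparison triangle with the triangle
\[
  \varphi^\star\LL_{\Log\Mfrak_{0,1}|\Mfrak_{0,1}} \to \EE_{\Kup^{\log}(\Xcal_0)|\Mfrak_{0,1}} \to \EE_{\Kup^{\log}(\Xcal_0)|\Log\Mfrak_{0,1}} \xrightarrow{[1]}
\]
of Theorem~\ref{thm: isolating LogOb} via an octahedral argument applied at the common vertex $\EE_{\Kup^{\log}(\Xcal_0)|\Log\Mfrak_{0,1}}$. This yields a morphism $\iota^\star\EE_{\Kup(X)|\Mfrak_{0,1}} \to \EE_{\Kup^{\log}(\Xcal_0)|\Mfrak_{0,1}}$, whose cone I take as $\EE_{\Kup^{\log}(\Xcal_0)|\Kup(X)}$. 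Compatibility with the cotangent complexes in diagram~\eqref{diag: final triangle of spaces} is inherited from the functoriality of the two ingredient triangles.

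The main obstacle is the final step: verifying that this cone is concentrated in degree $-1$ as a locally free sheaf. A K-theoretic bookkeeping using $\LogOb = \chi(\varphi^\star\TT_{\Log\Mfrak_{0,1}|\Mfrak_{0,1}}) + \OO$ pins down the rank and reveals that the auxiliary $+\OO$ term in $\LogOb$ is engineered precisely to absorb the spurious $\OO[1]$ summand produced above. To promote this K-theoretic identity to genuine concentration, I would check vanishing of $\HH^0$ using that $\iota$ is a closed embedding (Corollary~\ref{cor: iota closed embedding}), and vanishing of $\HH^1$ by tracking the rank-one sheaf $\HH^1(\LL_{\Log\Mfrak_{0,1}|\Mfrak_{0,1}})$ via the toric resolution recorded in the proof of Proposition~\ref{prop: Lchi in -1,1}; it cancels cleanly against the $\OO[1]$ contribution. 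What remains in degree $-1$ has constant rank and is flat over the moduli space, hence is the promised vector bundle.
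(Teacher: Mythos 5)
Your skeleton is the paper's: pull back the target triangle of Proposition~\ref{prop: exact triangle on X0} along $\Rder\pi_\star\Lder f^\star$, combine it with the triangle of Theorem~\ref{thm: isolating LogOb}, define $\EE_{\Kup^{\log}(\Xcal_0)|\Kup(X)}$ as a cone via the octahedral axiom, and use the closed embedding to kill $\HH^0$; your splitting $\Lder f^\star\OO_D(D)\cong f^\star\OO_X(D)\oplus\OO_C[1]$ (because $f^\star s_D=0$) is a correct, slightly sharper form of the paper's convexity computation. The genuine gap is at the splicing step. Two exact triangles sharing the vertex $\EE_{\Kup^{\log}(\Xcal_0)|\Log\Mfrak_{0,1}}$ do not by themselves produce an octahedron: before the axiom applies you must \emph{construct} a morphism $\iota^\star\EE_{\Kup(X)|\Mfrak_{0,1}} \to \EE_{\Kup^{\log}(\Xcal_0)|\Mfrak_{0,1}}$ (equivalently $(\Rder\pi_\star\Lder f^\star\OO_D(D))^\vee \to \varphi^\star\LL_{\Log\Mfrak_{0,1}|\Mfrak_{0,1}}$) making the relevant square commute, and your ``this yields a morphism'' is precisely the statement to be proved. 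The paper proves it by showing that the composite $\iota^\star\EE_{\Kup(X)|\Mfrak_{0,1}} \to \EE_{\Kup^{\log}(\Xcal_0)|\Log\Mfrak_{0,1}} \to \varphi^\star\LL_{\Log\Mfrak_{0,1}|\Mfrak_{0,1}}[1]$ vanishes, because it can be rewritten as a composite passing through three consecutive terms of the transitivity triangle of cotangent complexes. Your appeal to ``functoriality'' for compatibility with the cotangent complexes is not a substitute: that compatibility is an output of this construction, not an input.

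Your $\HH^1$-vanishing step is also only asserted: the ``clean cancellation'' of the $\OO[1]$ summand against the rank-one sheaf $\HH^1\big(\LL_{\Log\Mfrak_{0,1}|\Mfrak_{0,1}}\big)$ amounts to surjectivity on $\HH^1$ of exactly the morphism whose existence is the gap above, and you give no means of computing it. You do not need it: once the octahedron is in place, $\EE_{\Kup^{\log}(\Xcal_0)|\Kup(X)}$ is the cone of a map between two complexes of perfect amplitude $[-1,0]$, hence has perfect amplitude in $[-2,0]$ and in particular no $\HH^1$; the other triangle (with terms of amplitude $[0,1]$ and $[-1,1]$ by Proposition~\ref{prop: Lchi in -1,1}) bounds it in $[-1,1]$, so the amplitude is $[-1,0]$. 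Then the Four Lemma applied to the morphism of compatible triples shows $\HH^0(\EE_{\Kup^{\log}(\Xcal_0)|\Kup(X)}) \cong \HH^0(\LL_{\Kup^{\log}(\Xcal_0)|\Kup(X)})=0$ by Corollary~\ref{cor: iota closed embedding} (note the closed-embedding input only kills $\HH^0$ of the cotangent complex; transferring this to $\EE$ needs that comparison). A perfect complex in $[-1,0]$ with vanishing $\HH^0$ is automatically a vector bundle placed in degree $-1$, so the final appeal to flatness and constant rank is unnecessary and, as stated, unsupported.
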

\begin{proof} We will first build the obstruction theory for the morphism $\iota$, and after show that it is given by a vector bundle supported in degree $-1$. Recall from Proposition~\ref{prop: exact triangle on X0} that there is an exact triangle on $\Xcal_0$:
\begin{equation*} \TTlog_{\Xcal_0} \to \T_X \to  \OO_D(D) \xrightarrow{[1]}. \end{equation*}
Applying $\Rder \pi_\star \Lder f^\star$ and dualising and shifting the result, we obtain:
\begin{equation} \label{eqn: second exact triangle 1} \EE_{\Kup^{\log}(\Xcal_0)|\Log\Mfrak_{0,1}}[-1] \to \left( \Rder \pi_\star \Lder f^\star \OO_D(D) \right)^\vee \to \iota^\star \EE_{\Kup(X)|\Mfrak_{0,1}} \xrightarrow{[1]}.\end{equation}
Here we have used the explicit expression for the first term given by Lemma~\ref{lem: obstruction theory on fibre}. On the other hand, we have from Theorem \ref{thm: isolating LogOb} an exact triangle:
\begin{equation} \label{eqn: second exact triangle 2} \EE_{\Kup^{\log}(\Xcal_0)|\Log\Mfrak_{0,1}}[-1] \to \varphi^\star \LL_{\Log\Mfrak_{0,1}|\Mfrak_{0,1}} \to \EE_{\Kup^{\log}(\Xcal_0)|\Mfrak_{0,1}} \xrightarrow{[1]}.\end{equation}
Our goal is to construct a complex $\EE_{\Kup^{\log}(\Xcal_0)|\Kup(X)}$ fitting into an exact triangle:
\begin{equation} \label{eqn: second exact triangle 3} (\Rder\pi_\star \Lder f^\star \OO_D(D))^\vee \to \varphi^\star \LL_{\Log\Mfrak_{0,1}|\Mfrak_{0,1}} \to \EE_{\Kup^{\log}(\Xcal_0)|\Kup(X)} \xrightarrow{[1]}. \end{equation}
Once this is achieved, we will obtain the desired compatible triple by applying the octahedral axiom to \eqref{eqn: second exact triangle 1}, \eqref{eqn: second exact triangle 2}, \eqref{eqn: second exact triangle 3}. To construct \eqref{eqn: second exact triangle 3}, we will construct a  morphism
\begin{equation}\label{eqn: map from OD(D) to LogOb} (\Rder\pi_\star \Lder f^\star \OO_D(D))^\vee \to \varphi^\star \LL_{\Log\Mfrak_{0,1}|\Mfrak_{0,1}}\end{equation}
and then take the mapping cone. By the axioms of a triangulated category, it is enough to construct a morphism:
\begin{equation*}\iota^\star \EE_{\Kup(X)|\Mfrak_{0,1}} \to \EE_{\Kup^{\log}(\Xcal_0)|\Mfrak_{0,1}}. \end{equation*}
From $\TTlog_{\Xcal_0} \to \T_X$ we obtain the following morphism, which has already appeared in the exact triangle \eqref{eqn: second exact triangle 1}:
\begin{equation*} \iota^\star \EE_{\Kup(X)|\Mfrak_{0,1}} \to \EE_{\Kup^{\log}(\Xcal_0)|\Log\Mfrak_{0,1}}.\end{equation*}
From the exactness of \eqref{eqn: second exact triangle 2}, it follows that this morphism factors through $\EE_{\Kup^{\log}(\Xcal_0)|\Mfrak_{0,1}}$ if and only if the following composition is zero:
\begin{equation*} \iota^\star \EE_{\Kup(X)|\Mfrak_{0,1}} \to \EE_{\Kup^{\log}(\Xcal_0)|\Log\Mfrak_{0,1}} \to \varphi^\star \LL_{\Log\Mfrak_{0,1}|\Mfrak_{0,1}}[1]. \end{equation*}
Recall from the proof of Theorem~\ref{thm: isolating LogOb} that this factors as:
\begin{equation} \label{eqn: factorisation of map 2} \iota^\star \EE_{\Kup(X)|\Mfrak_{0,1}} \to \EE_{\Kup^{\log}(\Xcal_0)|\Log\Mfrak_{0,1}} \to \LL_{\Kup^{\log}(\Xcal_0)|\Log\Mfrak_{0,1}} \to \varphi^\star \LL_{\Log\Mfrak_{0,1}|\Mfrak_{0,1}}[1]. \end{equation}
Now consider the following commuting diagram of moduli spaces:
\begin{equation} \label{eqn: moduli square in proof of obstruction bundle}
\begin{tikzcd}
\Kup^{\log}(\Xcal_0) \ar[r,"\iota"] \ar[d,"\varphi"] & \Kup(X) \ar[d] \\
\Log\Mfrak_{0,1} \ar[r] & \Mfrak_{0,1}.
\end{tikzcd}
\end{equation}
Associated to this diagram is the following square: 
\begin{equation} \label{eqn: obstruction theory square in proof of obstruction bundle}
\begin{tikzcd}
\iota^\star \EE_{\Kup(X)|\Mfrak_{0,1}} \ar[r] \ar[d] & \EE_{\Kup^{\log}(\Xcal_0)|\Log\Mfrak_{0,1}} \ar[d] \\
\iota^\star \LL_{\Kup(X)|\Mfrak_{0,1}} \ar[r] & \LL_{\Kup^{\log}(\Xcal_0)|\Log\Mfrak_{0,1}}.
\end{tikzcd}
\end{equation}
This square is commutative. This follows directly from the construction of the perfect obstruction theory for logarithmic stable maps \cite[\S 5]{GrossSiebertLog} applied to the targets $X$ (with trivial logarithmic structure) and $\Xcal$ (with divisorial logarithmic structure), restricting the latter to the central fibre. Using this, we may refactor \eqref{eqn: factorisation of map 2} as:
\begin{equation} \label{eqn: factorisation of map 1} \iota^\star \EE_{\Kup(X)|\Mfrak_{0,1}} \to \iota^\star \LL_{\Kup(X)|\Mfrak_{0,1}} \to \LL_{\Kup^{\log}(\Xcal_0)|\Log\Mfrak_{0,1}} \to \varphi^\star \LL_{\Log\Mfrak_{0,1}|\Mfrak_{0,1}}[1].\end{equation}
Finally, from \eqref{eqn: moduli square in proof of obstruction bundle} we obtain a pair of interlocking exact triangles
\[
\begin{tikzcd}
\iota^\star \LL_{\Kup(X)|\Mfrak_{0,1}} \ar[r] & \LL_{\Kup^{\log}(\Xcal_0)|\Mfrak_{0,1}} \ar[r] \ar[d,"="] & \LL_{\Kup^{\log}(\Xcal_0)|\Kup(X)} \ar[r,"{[1]}"] & \, \\
\varphi^\star \LL_{\Log\Mfrak_{0,1}|\Mfrak_{0,1}} \ar[r] & \LL_{\Kup^{\log}(\Xcal_0)|\Mfrak_{0,1}} \ar[r] & \LL_{\Kup^{\log}(\Xcal_0)|\Log\Mfrak_{0,1}} \ar[r,"{[1]}"] &  \,
\end{tikzcd}
\]
which allow us to factor \eqref{eqn: factorisation of map 1} as:
\[ \iota^\star \EE_{\Kup(X)|\Mfrak_{0,1}} \to \iota^\star \LL_{\Kup(X)|\Mfrak_{0,1}} \to \LL_{\Kup^{\log}(\Xcal_0)|\Mfrak_{0,1}} \to \LL_{\Kup^{\log}(\Xcal_0)|\Log\Mfrak_{0,1}} \to \varphi^\star \LL_{\Log\Mfrak_{0,1}|\Mfrak_{0,1}}[1].\]
We conclude that the composition is zero, because the final three terms form an exact triangle. Thus, we obtain the morphism \eqref{eqn: map from OD(D) to LogOb} giving rise to the exact triangle \eqref{eqn: second exact triangle 3}. Applying the octahedral axiom to \eqref{eqn: second exact triangle 1}, \eqref{eqn: second exact triangle 2}, \eqref{eqn: second exact triangle 3} we obtain the fundamental exact triangle:
\begin{equation}\label{eqn: compatible triple for pushforward} \iota^\star \EE_{\Kup(X)|\Mfrak_{0,1}} \to \EE_{\Kup^{\log}(\Xcal_0)|\Mfrak_{0,1}} \to \EE_{\Kup^{\log}(\Xcal_0)|\Kup(X)}\xrightarrow{[1]}.\end{equation}
We now show that $\EE_{\Kup^{\log}(\Xcal_0)|\Kup(X)}$ forms a perfect obstruction theory. Consider the exact triangle \eqref{eqn: second exact triangle 3}. Since $\OO_X(D)$ is convex (Assumption \ref{assumption: convexity of D}), we have a $2$-term resolution
\begin{equation*}  (\Rder\pi_\star \Lder f^\star \OO_D(D))^\vee = [ \OO \to \pi_\star f^\star \OO_X(D)]^\vee
\end{equation*}
and therefore this term has perfect amplitude contained in $[0,1]$. The second term of \eqref{eqn: second exact triangle 3} has perfect amplitude contained in $[-1,1]$, and thus (by the same argument as in the proof of Theorem~\ref{prop: POT over A1}) we conclude that $\EE_{\Kup^{\log}(\Xcal_0)|\Kup(X)}$ has perfect amplitude contained in $[-1,1]$.

On the other hand, the first two terms of \eqref{eqn: compatible triple for pushforward} are both of perfect amplitude contained in $[-1,0]$, from which it follows that the final term is of perfect amplitude contained in $[-2,0]$. Combining these two observations, we conclude that $\smash{\EE_{\Kup^{\log}(\Xcal_0)|\Kup(X)}}$ is of perfect amplitude contained in $[-1,0]$. The axioms of a triangulated category produce a morphism of exact triangles
\begin{equation*}
\begin{tikzcd}
	 \iota^\star \EE_{\Kup(X)|\Mfrak_{0,1}} \ar[r] \ar[d] & \EE_{\Kup^{\log}(\Xcal_0)|\Mfrak_{0,1}} \ar[r] \ar[d] & \EE_{\Kup^{\log}(\Xcal_0)|\Kup(X)} \ar[d] \ar[r,"{[1]}"] & \, \\
	 \iota^\star \LL_{\Kup(X)|\Mfrak_{0,1}} \ar[r] & \LL_{\Kup^{\log}(\Xcal_0)|\Mfrak_{0,1}} \ar[r] & \LL_{\Kup^{\log}(\Xcal_0)|\Kup(X)} \ar[r,"{[1]}"] & \,\end{tikzcd}
\end{equation*}
and two applications of the Four Lemma show that the right-hand vertical morphism
\begin{equation*} \EE_{\Kup^{\log}(\Xcal_0)|\Kup(X)} \to \LL_{\Kup^{\log}(\Xcal_0)|\Kup(X)} \end{equation*}
is surjective on $\HH^{-1}$ and an isomorphism on $\HH^0$. We thus obtain a perfect obstruction theory for $\iota$, fitting into the compatible triple \eqref{eqn: compatible triple for pushforward}.

It remains to show that this obstruction theory is given by a vector bundle in degree $-1$. The crucial observation is that $\iota$ is a closed embedding (Corollary~\ref{cor: iota closed embedding}) and so:
\begin{equation*} \HH^0 \big(\LL_{\Kup^{\log}(\Xcal_0)|\Kup(X)} \big) = 0. \end{equation*}
This fact is specific to our setting. It follows ultimately from the observation that all logarithmic stable maps to $\Xcal_0$ must factor through the divisor $D$ (Lemma~\ref{lem: factor through D}). We thus conclude that $\EE_{\Kup^{\log}(\Xcal_0)|\Kup(X)}$ is of perfect amplitude concentrated in degree $-1$, so there is a vector bundle $E$ such that
 \begin{equation}\label{eqn: vector bundle relative POT} \EE_{\Kup^{\log}(\Xcal_0)|\Kup(X)} = E^\vee[1]\end{equation}
 which completes the proof.
 \end{proof}

\begin{proof}[Proof of Theorem \ref{thm: virtual pushforward}] Examining \eqref{eqn: second exact triangle 3}, we note that $(\Rder \pi_\star \Lder f^\star \OO_D(D))^\vee$ may be expressed as the pullback of a complex on $\Kup(X)$. As will be demonstrated in the computations of \S \ref{sec: component calculations}, the same is true for $\varphi^\star \LL_{\Log\Mfrak_{0,1}|\Mfrak_{0,1}}$ since it admits an explicit resolution in terms of tautological bundles. Thus, we see that the vector bundle $E$ obtained in \eqref{eqn: vector bundle relative POT} is the pullback of a bundle from $\Kup(X)$:
\begin{equation*} E = \iota^\star F.\end{equation*}
From this, it follows that
\begin{equation*} \iota_\star [\Kup^{\log}(\Xcal_0)]^{\virt} = \iota_\star \iota^! [\Kup(X)]^{\virt} = \e(F) \cap [\Kup(X)]^{\virt}. \end{equation*}
The cohomological term $\e(F)$ may easily be computed using the exact triangle \eqref{eqn: second exact triangle 3} (or, to be more precise, its analogue on $\Kup(X)$). We have the following relation in K-theory:
\begin{align*} F & = -\chi \big(\EE_{\Kup^{\log}(\Xcal_0)|\Kup(X)}^\vee \big) \\
& = -\chi \big(\varphi^\star \TT_{\Log\Mfrak_{0,1}|\Mfrak_{0,1}}\big) + \chi \big( \Rder \pi_\star \Lder f^\star \OO_D(D) \big).
\end{align*}
From the proof of Proposition~\ref{prop: pushforward compatible triple} we have
\begin{equation*}\chi ( \Rder \pi_\star \Lder f^\star \OO_D(D) ) = \pi_\star f^\star \OO_X(D) - \OO\end{equation*}
while on the other hand by Proposition~\ref{prop: Lchi in -1,1} we have (suppressing pullbacks as before):
\begin{equation*} -\chi(\varphi^\star \TT_{\Log\Mfrak_{0,1}|\Mfrak_{0,1}}) = -\chi(\TT_{\Acal_Q}) + \chi(\TT_{\Acal_P}). \end{equation*}
Putting everything together, we arrive at the formula
\begin{equation*} F = \pi_\star f^\star \OO_X(D) - \LogOb \end{equation*}
where $\LogOb$ is given by Definition \ref{def: LogOb}. This completes the proof.\end{proof}

\section{Component contributions and geometric applications}\label{sec: component calculations}

\noindent While the preceding results may seem rather formal, we will now show that they are amenable to calculation, with extremely concrete geometric consequences. We focus on the main example of $(\PP^2,E)$ degenerating to $(\PP^2,\Delta)$. Although it is clear that our methods apply to any example with a sufficiently strong torus action, we leave the investigation of these to future work.

\subsection{Outline} We begin (\S \ref{sec: decomp by multidegree}) by specifying the decomposition of the moduli space whose contributions we are interested in calculating. We then (\S \ref{sec: localisation scheme}) describe a localisation procedure for calculating the integral of the zero-dimensional logarithmic virtual class:
\begin{equation*} \iota_\star [\Kup^{\log}(\Xcal_0)]^{\virt} \in \mathrm{A}_0(\Kup_{0,1}(\PP^2,d)).\end{equation*}
By the functoriality of virtual localisation, this provides a method for isolating the contribution of each component of the central fibre to the logarithmic Gromov--Witten invariant. This scheme is fully implemented in accompanying Sage code, which we use to generate tables of component contributions up to degree 8 (\S \ref{sec: tables}).

Based on these low-degree numerics, we conjecture general formulae for the various component contributions, and provide some evidence for these, including a striking combinatorial conjecture which arises from our localisation calculations (\S \ref{sec: conjectures}).

Finally we show how, working back from these component contributions, combined with the Gross--Pandharipande--Siebert multiple cover formula, one is able to completely describe the degeneration behaviour of embedded tangent curves up to degree $3$, and obtain partial information in higher degrees (\S \ref{sec: degenerations of curves}). The resulting theorems are purely classical, but we are not aware of a proof which does not pass through logarithmic Gromov--Witten theory.

\subsection{Decomposition by unordered multi-degree}\label{sec: decomp by multidegree} \label{sec: decomposition of moduli space} As discussed, from now on we focus on the main example. Consider the moduli space $\Kup^{\log}(\Xcal_0)=\Kup(\Delta)=\Kup_{0,1}(\Delta,d)$. This space has many connected and irreducible components, and is not pure-dimensional, not even locally. Enumerating all of its components, for general $d$, is a somewhat non-trivial task.

Instead we focus on a more granular decomposition of the moduli space than that given by connected or irreducible components. Note that since $\Delta \subseteq \PP^2$ is a curve, any stable map $f \colon C \to \Delta$ of degree $d$ gives a well-defined (ordered, non-negative) partition
\begin{equation*} d = d_0+d_1+d_2 \end{equation*}
which records the total degree of $f$ over each component $D_i$ of $\Delta$. Forgetting the ordering, we obtain a locally-constant multi-degree function
\begin{equation*} \mathrm{m} \colon \Kup(\Delta) \to \N^3/\mathrm{S}_3 \end{equation*}
which produces a decomposition into clopen substacks
\begin{equation*} \Kup(\Delta) = \coprod_{\mathbf{d} \in \N^3/\mathrm{S}_3} \mathrm{m}^{-1}(\mathbf{d}).\end{equation*}
indexed by (unordered, non-negative) partitions $\mathbf{d} \vdash d$ of length $3$. We note that each $\mathrm{m}^{-1}(\mathbf{d})$ has at least a three-fold symmetry, arising from the three-fold rotational symmetry of $\Delta$. We are interested in calculating the contributions of these substacks to the logarithmic Gromov--Witten invariant:
\begin{equation*} \int_{\mathrm{m}^{-1}(\mathbf{d})} [\Kup^{\log}(\Xcal_0)]^{\virt}. \end{equation*}
These provide new, geometrically meaningful refinements of the much-studied logarithmic Gromov--Witten invariants of $(\PP^2,E)$. It is these which we wish to calculate.

\begin{remark} Our methods allow us to deal with much finer decompositions than the one given by the unordered multi-degree. We have decided to organise information at this level for the purposes of exposition. \end{remark}

\subsection{Localisation scheme and computation of $\LogOb$}\label{sec: localisation scheme} The strategy is to apply functorial virtual localisation to the virtual push-forward formula (see Theorem \ref{thm: virtual pushforward} and Remark \ref{rmk: push forward abuse}):
\begin{equation*} \iota_\star [\Kup^{\log}(\Xcal_0)]^{\virt} = \left( \dfrac{\e(\pi_\star f^\star \OO_{\PP^2}(\Delta))}{\e(\LogOb)} \right) \cap [\Kup_{0,1}(\PP^2,d)].\end{equation*}
Since virtual localisation is a well-established technique in enumerative geometry, we will not spell out every detail in what follows, opting instead to focus on those aspects of our calculation which are novel. 

\subsubsection{Localisation setup}\label{subsection localisation setup} We quickly run through the standard localisation setup for $\PP^2$. Take $T=(\Cstar)^2$ and denote the standard weights by $\mu_1,\mu_2$. Choose an injective group homomorphism $T \to (\Cstar)^3$. This induces a linear action $T \acts \C^3$, whose weights we denote by $-\lambda_0,-\lambda_1,-\lambda_2$. Here each $\lambda_i$ is a linear form in $\mu_1,\mu_2$, and we assume that:
\begin{equation}\label{eqn: condition on weights} \lambda_0+\lambda_1+\lambda_2=0.\end{equation}
The action $T \acts \C^3$ descends to an action $\T \acts \PP^2$ whose fixed points are the standard co-ordinate points $p_0,p_1,p_2$ and whose one-dimensional orbit closures are the toric divisors $D_0,D_1,D_2$. Since the toric boundary $\Delta$ is preserved by this action, we obtain an action on the logarithmic scheme $T \acts \Xcal_0$.

The action $T \acts \C^3$ induces a linearisation of the tautological bundle $\OO_{\PP^2}(-1)$ and consequently we obtain an action $T \acts \OO_{\PP^2}(k)$ for any $k \in \Z$, whose weights over $p_0,p_1,p_2$ are $k\lambda_0,k\lambda_1,k\lambda_2$, respectively. Whenever we write $\OO_{\PP^2}(k)$ we will mean the $T$-equivariant line bundle equipped with this action.

\subsubsection{Functoriality, fixed loci and normal bundles}\label{subsection fixed loci}
The actions on $\Xcal_0$ and $\PP^2$ induce actions on the corresponding moduli spaces, such that the morphism
\begin{equation*} \iota \colon \Kup^{\log}(\Xcal_0) \to \Kup_{0,1}(\PP^2,d) \end{equation*}
is equivariant. Since a $T$-fixed stable map to $\PP^2$ must factor through $\Delta$, we see that the $T$-fixed loci in the source and the target are identical, and that $\iota$ restricts to an isomorphism between the two. These fixed loci are well-understood and are indexed by so-called localisation graphs $\Theta$ \cite{KontsevichEnumeration, GraberPandharipande}. Up to a finite cover, each fixed locus $\Fup_\Theta$ is a product of Deligne--Mumford spaces (parametrising curve components contracted to the torus-fixed points) and finite cyclic gerbes (parametrising curve components covering the torus-invariant lines).

Given such a fixed locus, its contribution to the integral of $\iota_\star[\Kup^{\log}(\Xcal_0)]^{\virt}$ is given by:
\begin{equation}\label{eqn: localisation term}
\int_{\Fup_{\Theta}} \left( \dfrac{\Euler(\pi_\star f^\star \OO_{\PP^2}(\Delta)|_{\Fup_{\Theta}})}{\Euler(\LogOb|_{\Fup_{\Theta}})\cdot \Euler\big(\Norm_{\Fup_{\Theta}|\Kup_{0,1}(\PP^2,d)}\big)} \right).
\end{equation}
Formulae for the normal bundle term in the denominator are well-known, see \cite[\S 4]{GraberPandharipande} or \cite[Theorem~9.2.1]{CoxKatz}.

By functorial virtual localisation \cite[Lemma 2.1]{LLY3}, for every choice of multi-degree $\mathbf{d}$ the contribution of the open and closed substack (see \S \ref{sec: decomposition of moduli space})
\begin{equation*} \mathrm{m}^{-1}(\mathbf{d}) \subseteq \Kup^{\log}(\Xcal_0) \end{equation*}
is given by the sum of those terms \eqref{eqn: localisation term} for which $\Fup_\Theta \subseteq \mathrm{m}^{-1}(\mathbf{d})$. Determining all such fixed loci is an easy combinatorial exercise. Thus, the localisation calculation will allow us to separate out the individual component contributions.

\subsubsection{Computing $\pi_\star f^\star \OO_{\PP^2}(\Delta)$}\label{subsection computing numerator}
Since we are in genus zero, the bundle $\pi_\star f^\star \OO_{\PP^2}(\Delta)|_{\Fup_{\Theta}}$ is (non-equivariantly) trivial, meaning that the term $\Euler(\pi_\star f^\star \OO_{\PP^2}(\Delta)|_{\Fup_{\Theta}})$ is pure weight. The assumption $\lambda_0+\lambda_1+\lambda_2=0$ ensures we have an identification of \emph{equivariant} line bundles
\begin{equation*} \mathrm{N}_{\Delta|\PP^2}=\OO_{\PP^2}(3)|_{\Delta}.\end{equation*}
The weights at the torus-fixed points are therefore $3\lambda_0,3\lambda_1,3\lambda_2$. From this the weights on $\pi_\star f^\star \OO_{\PP^2}(\Delta)|_{\Fup_{\Theta}}$ can easily be calculated, see e.g. \cite[\S 4]{GraberPandharipande}.

\subsubsection{Local computation of $\mathrm{LogOb}$} \label{subsection computing LogOb}\label{sec: computing LogOb} It remains to describe the denominator term $\Euler(\LogOb|_{\Fup_{\Theta}})$. This is the most novel part of the argument, relying crucially on the deformation theory of the Artin fan and its relation to line bundles encoded in the logarithmic structure, together with a tropical-geometric method for computing such bundles.

We begin with an explicit local description of $\LogOb$. Consider an atomic open neighbourhood $\mathcal{V} \subseteq \Kup^{\log}(\Xcal_0)$ (see \cite[\S 2.2]{AbramovichWiseBirational}). The unique closed stratum of $\mathcal{V}$ is indexed by a combinatorial type of tropical stable map to $\RR_{\geq 0}$. As before, we let $\sqC$ denote the source curve of this combinatorial type; this has $r$ edges and $m$ leaves, corresponding to the edge lengths $l_1,\ldots,l_r$ and target offsets $c_1,\ldots,c_m$ in the tropical moduli.

\begin{lemma}\label{lem: LogOb description 1} We have
\begin{equation*}\LogOb|_{\mathcal{V}} = \sum_{i=1}^m \OO(c_i) - \sum_{j=1}^{m-1} \OO(r_j)\end{equation*}
in which the $r_j$ are certain relation parameters, defined in the proof.\end{lemma}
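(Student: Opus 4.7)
The plan is to unpack Definition \ref{def: LogOb} directly on the Artin cone level. Since $\mathcal{V}$ is an atomic open and $\varphi$ factors locally through $\Acal_Q \to \Acal_P$, the pullback $\varphi^\star \TT_{\Log\Mfrak_{0,1}|\Mfrak_{0,1}}$ agrees with the pullback of $\TT_{\Acal_Q|\Acal_P}$. Applying $\chi$ to the distinguished triangle $\TT_{\Acal_P} \to \TT_{\Acal_Q} \to \TT_{\Acal_Q|\Acal_P} \xrightarrow{[1]}$, we recover the K-theoretic expression
\begin{equation*}
\LogOb = \chi(\TT_{\Acal_Q}) - \chi(\TT_{\Acal_P}) + \OO
\end{equation*}
from formula \eqref{eqn: basic formula for LogOb}, reducing everything to a computation on the Artin cones.

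The main step is then to compute each of $\chi(\TT_{\Acal_P})$ and $\chi(\TT_{\Acal_Q})$ using the explicit resolutions of the cotangent complexes of the Artin cones constructed in the proof of Proposition \ref{prop: Lchi in -1,1}. For $\Acal_P$, this is a two-term complex $[\mathrm{Lie}(T_P) \otimes \OO \to \T_{U_P}]$ (since $U_P = \Aaff^r$ is smooth); for $\Acal_Q$, we use the three-term complex $[\mathrm{Lie}(T_Q) \otimes \OO \to \T_{\Aaff^{r+m}}|_{U_Q} \to N_{U_Q|\Aaff^{r+m}}]$ afforded by the regular embedding $U_Q \hookrightarrow \Aaff^{r+m}$ of Proposition \ref{prop: minimal monoid}. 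Each of the equivariant sheaves appearing here splits into torus eigenlines, which descend from the Artin cone to tautological line bundles on $\mathcal{V}$ associated to PL functions on the tropicalisation, as noted in the remark following Proposition \ref{prop: Lchi in -1,1}.

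Concretely, I would identify $\T_{U_P}$ with $\sum_{j=1}^r \OO(l_j)$ (one summand per edge length generator of $\N^r$), and $\T_{\Aaff^{r+m}}|_{U_Q}$ with $\sum_{j=1}^r \OO(l_j) + \sum_{i=1}^m \OO(c_i)$ (one summand per generator of $\N^{r+m}$). The contributions $\mathrm{Lie}(T_P) \otimes \OO$ and $\mathrm{Lie}(T_Q) \otimes \OO$ are trivial bundles of ranks $r$ and $r+1$ respectively, the adjoint representation of a torus being trivial. Finally, I would choose a basis $r_1, \ldots, r_{m-1}$ of the kernel of $\N^{r+m} \to Q$ (for instance, the linearly independent tropical continuity relations \eqref{eqn: tropical continuity relations}); since $U_Q$ is cut out by the associated binomials, the conormal $I/I^2$ decomposes into the corresponding line bundles, giving $N_{U_Q|\Aaff^{r+m}} = \sum_{k=1}^{m-1} \OO(r_k)$. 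Substituting into the formula for $\LogOb$, the $\OO(l_j)$ contributions cancel between the two $\chi$ terms, as do the trivial summands $(r+1)\OO$, $r\OO$ and the extra $\OO$, leaving exactly $\sum_{i=1}^m \OO(c_i) - \sum_{k=1}^{m-1} \OO(r_k)$ as claimed.

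The main obstacle is the precise identification of the summands of the normal bundle with the line bundles $\OO(r_k)$: the conormal generators are binomials $\mathbf{x}^{\alpha_k} - \mathbf{x}^{\beta_k}$ with $r_k = \alpha_k - \beta_k$, which are $T_Q$-equivariant of common weight $\bar\alpha_k = \bar\beta_k \in Q^{\gp}$, whereas the formula is phrased in terms of the relation $r_k$ itself, which lies in $\ker(\N^{r+m,\gp} \to Q^{\gp})$. Reconciling these two viewpoints requires tracking the conventions relating tautological line bundles associated to PL functions on the tropicalisation of the moduli space with equivariant line bundles on $\Acal_Q$; once this bookkeeping is settled, the remaining cancellations are purely formal.
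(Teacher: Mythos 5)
Your proposal is correct and follows essentially the same route as the paper: unpack \eqref{eqn: basic formula for LogOb} on the Artin cones, use the two- and three-term resolutions from Propositions \ref{prop: minimal monoid} and \ref{prop: Lchi in -1,1}, decompose $\T_{\Aaff^{r}}$ and $\T_{\Aaff^{r+m}}$ into the toric line bundles $\OO(l_j)$, $\OO(c_i)$, and identify the normal bundle summands with the relations. The only point to fix is your ``main obstacle'': the paper defines the relation parameter $r_j$ not as the kernel element $\alpha_j-\beta_j$ (whose associated bundle would be trivial) but as the common value $\bar\alpha_j=\bar\beta_j=\mathsf{f}(v_j)\in Q$, i.e.\ exactly the $T_Q$-weight of the binomial generator of the ideal, so the identification $\mathrm{N}_{U_Q|\Aaff^{r+m}}=\sum_j\OO(r_j)$ is immediate and no further bookkeeping is needed.
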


\begin{remark} The quantities $c_i$ and $r_j$ give piecewise-linear functions on the tropicalisation of $\mathcal{V}$ or, equivalently, global sections of the ghost sheaf. These give rise to associated line bundles $\OO(c_i)$ and $\OO(r_j)$ equipped with preferred sections. These are the pullbacks of the corresponding toric Cartier divisors on the Artin fan.\end{remark}

\begin{proof}
Recall that we have
\begin{equation*} \LogOb|_{\mathcal{V}} = \chi(\TT_{\Acal_Q}) - \chi(\TT_{\Acal_P}) + \OO \end{equation*}
where $Q$ and $P$ are the minimal monoids corresponding to the tropical stable map and the underlying tropical curve, respectively. There are explicit presentations (see \S \ref{sec: tropical moduli and minimal monoid} and \S \ref{sec: isolating LogOb}; as before, we suppress pullbacks from the notation):
\begin{align*} \TT_{\Acal_Q} &  = \big[\OO^{\oplus r+1} \to \T_{\Aaff^{r+m}} \to \mathrm{N}_{U_Q|\Aaff^{r+m}} \big], \\[3pt]
\TT_{\Acal_P} & = \big[ \OO^{\oplus r} \to  \T_{\Aaff^r} \big]. \end{align*}
From these, we obtain:
\begin{align*}\LogOb |_{\mathcal{V}} & = \big[ -(r+1)\OO + \mathrm{T}_{\Aaff^{r+m}} - \mathrm{N}_{U_Q|\Aaff^{r+m}} \big] - \big[ - r \OO + \mathrm{T}_{\Aaff^r} \big] + \OO	\\[3pt]
& = \mathrm{T}_{\Aaff^{r+m}} - \mathrm{T}_{\Aaff^r} - \mathrm{N}_{U_Q|\Aaff^{r+m}}.\addtocounter{equation}{1}\tag{\theequation} \label{eqn: LogOb description tangent normal bundles} \end{align*}

The tangent bundle terms decompose into toric line bundles associated to the co-ordinate hyperplanes:
\begin{align*} \T_{\Aaff^{r+m}} = \sum_{i=1}^r \OO(l_i) + \sum_{i=1}^m \OO(c_i), \qquad
\T_{\Aaff^r} = \sum_{i=1}^r \OO(l_i).\end{align*}
The normal bundle term may also be expressed in terms of such bundles. Recall that $Q$ arises as a quotient
\begin{equation*} \N^{r+m} \rightarrow Q \end{equation*}
given by $m-1$ independent continuity relations. For each such relation, let $r_j$ denote the sum of tropical parameters appearing on one side of the equation (notice that we always have $r_j = f(v_j)$ for some vertex $v_j \in \sqC$). We then have:
\begin{equation*} \mathrm{N}_{U_Q|\Aaff^{r+m}} = \sum_{j=1}^{m-1} \OO(r_j).\end{equation*}
Putting everything together, we arrive at the desired formula. 
\end{proof}

The preceding lemma gives a local description for $\LogOb$ in terms of line bundles associated to piecewise-linear functions on the tropical moduli space. We now give a method for calculating such bundles, in terms of evaluation and cotangent line classes. A variant of this technique was first employed in \cite[\S 3]{BNRGenus1}.

\begin{construction} Consider the restriction to $\mathcal{V}$ of the universal logarithmic stable map
\bcd
\Ccal \ar[r,"f"] \ar[d,"\pi"] & \Xcal_0 \\
\mathcal{V} \ar[u,bend left,"x" pos=0.45] & \,
\ecd
which we tropicalise to obtain a tropical stable map
\bcd
\sqC \ar[d,"\mathsf{p}"] \ar[r,"\mathsf{f}"] & \RR_{\geq 0} \\
\sigma \ar[u, bend left, "\mathsf{x}"] & \,
\ecd
over the base cone $\sigma = Q^\vee_{\RR} = \operatorname{Trop}(\mathcal{V})$. Given a piecewise-linear function $\varphi$ on $\sigma$ we wish to describe the associated line bundle. First note that we have:
\begin{equation*} \OO_{\mathcal{V}}(\varphi) = x^\star \pi^\star \OO_{\mathcal{V}}(\varphi) = x^\star \OO_{\Ccal}(\mathsf{p}^\star \varphi).\end{equation*}
The basic idea is to compare the piecewise-linear functions $\mathsf{p}^\star \varphi$ and $\mathsf{f}^\star 1$ on $\sqC$. This will give a relation between the bundles $\OO_{\Ccal}(\mathsf{p}^\star \varphi)$ and $\OO_{\Ccal}(\mathsf{f}^\star 1) = f^\star \OO_X(D)$ on $\Ccal$. Pulling back along the section $x$, we will then obtain an expression for $\OO_{\mathcal{V}}(\varphi)$.

Let $v_0 \in \sqC$ be the vertex containing the marking leg, and denote the adjacent edges by $e_1,\ldots,e_k$, with associated expansion factors $\alpha_{1},\ldots,\alpha_k$. Let $\Ccal_0$ be the corresponding curve component and $q_1,\ldots,q_k$ the corresponding nodes. The piecewise-linear function $\mathsf{f}^\star 1$ has slope $3d$ along the marking leg, and $-\alpha_{i}$ along each edge $e_i$. On the other hand, the function $\mathsf{p}^\star \varphi$ has slope zero along every edge and leg. We thus obtain \cite[Proposition 2.4.1]{RanganathanSantosParkerWise1}:
\begin{equation}\label{eqn: tropical bundles on C} \OO_{\Ccal}( \mathsf{f}^\star 1 - \mathsf{p}^\star \varphi)|_{\Ccal_0} = \OO_{\Ccal_0}(3d x - \Sigma_{i=1}^k \alpha_i q_i) \otimes \pi^\star \OO_{\mathcal{V}}(\mathsf{f}(v_0) - \varphi).\end{equation}
Pulling back along $x$, using the fact that $\OO_{\Ccal}(\mathsf{f}^\star 1) = f^\star \OO_{X}(D) = f^\star \OO_{\PP^2}(3)$, we obtain
\begin{align*} \OO_{\mathcal{V}}(\varphi) & = \ev_x^\star \OO_{\PP^2}(3) \otimes x^\star \OO_{\Ccal_0}(-3dx) \otimes \OO_{\mathcal{V}}(\varphi - \mathsf{f}(v_0))\\
& = \ev_x^\star \OO_{\PP^2}(3) \otimes L_x^{3d} \otimes \OO_{\mathcal{V}}(\varphi - \mathsf{f}(v_0)) \addtocounter{equation}{1}\tag{\theequation} \label{eqn: formula for O(phi)} \end{align*}
where $L_x$ is the cotangent line bundle. For every $\varphi$ we will consider, the piecewise-linear function on $\mathcal{V}$
\begin{equation*} \varphi-\mathsf{f}(v_0) \end{equation*}
will be expressible as a linear combination of edge lengths (this will typically not be the case for $\varphi$ itself). Since the line bundle associated to an edge length is given by the pullback of the corresponding boundary divisor in $\Mfrak_{0,1}$, the identity \eqref{eqn: formula for O(phi)} gives a closed formula for $\OO_{\mathcal{V}}(\varphi)$ in terms of tautological bundles. The expression for $\e (\OO_{\mathcal{V}}(\varphi))$ in terms of tautological classes immediately follows.\end{construction}

\begin{example} Here we illustrate this process. Consider the following combinatorial type of tropical stable map
\begin{center}
\begin{tikzpicture}

\draw[fill=blue] (0,0.3) circle[radius=2pt];
\draw[->,color=blue] (0,0.3) to (5,0.3);

\draw[fill] (2,3) circle[radius=2pt];
\draw (2.05,3.1) node[above]{\small$v_1$};

\draw (2,3) -- (3.5,2);
\draw (2.75,2.5) node[above]{\small$l_1$};

\draw[<->] (0,3) -- (1.8,3);
\draw (0.9,3) node[above]{\small$c_1$};

\draw[fill] (1.5,1) circle[radius=2pt];
\draw (1.55,1.1) node[above]{\small$v_2$};

\draw (1.5,1) -- (3.5,2);
\draw (2.5,1.5) node[above]{\small$l_2$};

\draw[<->] (0,1) -- (1.3,1);
\draw (0.65,1) node[above]{\small$c_2$};

\draw[fill] (3.5,2) circle[radius=2pt];
\draw (3.55,2.1) node[above]{\small$v_0$};

\draw[->] (3.5,2) -- (4.5,2);
\draw (4.45,2) node[above]{\small$x$};

\end{tikzpicture}	
\end{center}
where $v_1$ and $v_2$ have degree $1$ and $v_0$ has degree $0$. By Lemma \ref{lem: LogOb description 1} we have the local description $\LogOb = \OO(c_1) + \OO(c_2) - \OO(r_1)$, where $r_1 = \mathsf{f}(v_0)= c_1 + 3l_1 = c_2 + 3l_2$. We begin by calculating $\OO(c_1)$. The piecewise-linear function $\mathsf{f}^\star 1 - \mathsf{p}^\star c_1$ on $\sqC$ is given by
\begin{center}
\begin{tikzpicture}

\draw[fill] (2,3) circle[radius=2pt];
\draw (2,3.1) node[above]{\small$0$};

\draw (2,3) -- (3.5,2);
\draw[blue] (2.75,2.5) node[above]{\small$3$};

\draw[fill] (1.5,1) circle[radius=2pt];
\draw (1.55,1) node[below]{\small$c_2-c_1$};

\draw (1.5,1) -- (3.5,2);
\draw[blue] (2.4,1.45) node[above]{\small$3$};

\draw[fill] (3.5,2) circle[radius=2pt];
\draw (3.55,2.1) node[above]{\small$3l_1$};

\draw[->] (3.5,2) -- (4.5,2);
\draw[blue] (4.45,2) node[above]{\small$6$};

\end{tikzpicture}	
\end{center}
where the slopes are indicated in blue. The identity \eqref{eqn: tropical bundles on C} then reads:
\begin{equation*} \OO_{\Ccal}(\mathsf{f}^\star 1 - \mathsf{p}^\star c_1)|_{\Ccal_0} = \OO_{\Ccal_0}(6x-3q_1 -3q_2) \otimes \pi^\star \OO(3l_1). \end{equation*}
Pulling back along $x$ and rearranging, we obtain
\begin{align*} \OO_{\mathcal{V}}(c_1) & = x^\star \OO_{\Ccal}(\mathsf{f}^\star 1) \otimes x^\star \OO_{\Ccal_0}(-6x) \otimes \OO_{\mathcal{V}}(-3l_1) \\
& = \ev_x^\star \OO_{\PP^2}(3) \otimes L_x^6 \otimes \OO_{\mathcal{V}}(-3l_1).\end{align*}
We therefore conclude that
\begin{equation*} \e (\OO_{\mathcal{V}}(c_1)) = \ev_x^\star (3H) + \psi_x^6 - 3D_1\end{equation*}
where $D_1=\e(\OO_{\mathcal{V}}(l_1))$ is the pullback of the corresponding boundary divisor from $\Mfrak_{0,1}$ (this may be identified with a stratum of the logarithmic moduli space or, if the node persists on an open neighbourhood, with a sum of tangent line classes). By the same arguments, it is easy to see that:
\begin{align*} \e(\OO_{\mathcal{V}}(c_2)) & = \ev_x^\star(3H) + \psi_x^6 - 3D_2, \\
\e(\OO_{\mathcal{V}}(r_1)) & = \ev_x^\star (3H) + \psi_x^6.\end{align*}\end{example}

\subsubsection{Global computation of $\LogOb$} The above computations are local to an atomic neighbourhood of the moduli space $\Kup^{\log}(\Xcal_0)$. We now show how to obtain a \emph{global} description of $\e(\LogOb|_{\Fup_{\Theta}})$, over each fixed locus $\Fup_{\Theta}$. We begin with the following key observation, which  drastically simplifies the calculations:
\begin{theorem} \label{thm: LogOb pure weight} $\e(\LogOb|_{\Fup_{\Theta}})$ is pure weight.\end{theorem}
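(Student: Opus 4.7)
The plan is to leverage the local description of $\LogOb$ from Lemma \ref{lem: LogOb description 1} together with the presentation \eqref{eqn: formula for O(phi)} to obtain an explicit expression for $\e(\LogOb)$, and then analyze its restriction to $\Fup_\Theta$ term by term. The key preliminary observation is that the tropical continuity relations identify $r_j \equiv \mathsf{f}(v_0)$ as elements of $Q$ for every $j$, hence $\OO(r_j) \cong \OO(\mathsf{f}(v_0))$ on $\mathcal{V}$. This reduces the local K-theoretic expression to $\LogOb|_\mathcal{V} = \sum_{i=1}^m \OO(c_i) - (m-1)\OO(\mathsf{f}(v_0))$.

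Applying \eqref{eqn: formula for O(phi)} to each factor and summing Chern classes, the contributions of $\ev_x^\star \OO_{\PP^2}(3)$ and $L_x^{3d}$ combine with net multiplicity one (since $m - (m-1) = 1$), producing the working formula
\[
\e(\LogOb)|_\mathcal{V} \;=\; c_1(\ev_x^\star\OO_{\PP^2}(3)) \,+\, 3d\,\psi_x \,-\, \sum_{e} n_e\, \alpha_e\, c_1(\OO(l_e)),
\]
where $n_e$ counts the leaves of $\sqC$ lying on the far side of the edge $e$ from $v_0$. I would then restrict this expression to $\Fup_\Theta$ and show that each term assembles into a pure weight. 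The first summand is manifestly pure weight, since the marking maps to a $T$-fixed point of $\Delta$. The remaining work is to establish that $3d\,\psi_x - \sum_{e} n_e \alpha_e\, c_1(\OO(l_e))$ is pure weight on $\Fup_\Theta$, where each edge-length bundle restricts to a sum of cotangent classes at the two branches of the corresponding node, carrying both pure-weight parts (from the target tangent weights at the $T$-fixed image of the node) and non-equivariant parts (from $\psi$-classes on contracted components carrying moduli).

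The crux and main obstacle is the verification that the non-equivariant contributions cancel in this alternating combination. I expect this to follow from tropical balancing: at $v_0$ one has $3d = \sum_{e \sim v_0} \alpha_e$, and at every internal contracted vertex the sum of outgoing expansion factors vanishes. Combined with the counts $n_e$, these balancing identities should produce, vertex by contracted vertex, exactly the universal relations among $\psi$-classes at the special points of each contracted component required to eliminate all non-equivariant terms. Once this cancellation is executed, the residual expression is a pure $T$-character which can be read off directly from the tropical data $\Theta$ together with the target weights $\lambda_0, \lambda_1, \lambda_2$, establishing the theorem.
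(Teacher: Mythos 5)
Your local reduction of $\LogOb$ to $\sum_{i}\OO(c_i)-(m-1)\OO(\mathsf{f}(v_0))$ is legitimate, but the argument then founders on a decisive point: $\e(\LogOb|_{\Fup_\Theta})$ is the \emph{multiplicative} equivariant Euler class of a rank-one K-theory class --- it enters the localisation term \eqref{eqn: localisation term} and the identity $\e(F)\cdot\e(\LogOb)=\e(\pi_\star f^\star\OO_X(D))$ as the ratio $\prod_i \e(\OO(c_i))/\prod_j \e(\OO(r_j))$ --- whereas your ``working formula'' computes the signed sum of first Chern classes, i.e.\ $c_1$ of the determinant of $\LogOb$. Purity of that sum does not imply purity of the ratio: on the open stratum one has $\e(\OO(c_i))=3\lambda_{j(i)}+3d\psi_x+3d_i\psi_{q_i}$, and the product of such factors produces cross terms like $\psi_{q_i}\psi_{q_j}$ which no additive balancing identity controls. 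What is actually needed, and what the paper proves, is purity of each factor separately: for every target offset the $\psi$- and boundary-divisor terms in the global formula \eqref{eqn: formula for O(ci)} on $\Fup_\Theta\cong\ol{\Mcal}_{0,m+1}$ cancel exactly (via comparisons pulled back from $\ol{\Mcal}_{0,3}$), leaving $\e(\OO(c_i))=3\lambda_{j(i)}$ on the nose; and the relation part is identified with the pullback of $\mathrm{N}_{U_Q|\Aaff^{r+m}}$, which is shown to be non-equivariantly trivial on the fixed locus by a Tor-independence argument across the boundary strata (the classifying map factors through the origin of $U_Q$). Replacing every $\OO(r_j)$ by $\OO(\mathsf{f}(v_0))$ and absorbing it additively hides precisely this geometric input.

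A secondary gap: your expression $c_1(\ev_x^\star\OO_{\PP^2}(3))+3d\,\psi_x-\sum_e n_e\alpha_e\,c_1(\OO(l_e))$ is derived from \eqref{eqn: formula for O(phi)} for a fixed combinatorial type and is valid only away from the boundary of $\Fup_\Theta$: when the contracted part of the curve degenerates, the identity expressing $c_i-\mathsf{f}(v_0)$ in edge lengths acquires new edges with nonzero expansion factors, which is exactly the source of the boundary corrections $-\sum_{(A,B)}\bigl(\sum_{j\in A}3d_j\bigr)D_{(A,B)}$ that the paper's proof has to introduce and then cancel. Your proposed vertex-by-vertex balancing argument would therefore, at best, establish the aggregate additive cancellation over the open stratum --- a true but strictly weaker statement than the theorem, which concerns the full equivariant Euler class on all of $\Fup_\Theta$.
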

\begin{proof}
The fixed locus $\Fup_{\Theta}$ determines a unique ``least degenerate'' combinatorial type of tropical curve, with only those nodes forced by the graph $\Theta$. Since we are in genus zero, this in turn defines a unique combinatorial type of tropical stable map to $\RR_{\geq 0}$. As before, let us suppose that $\sqC$ has $r$ edges and $m$ leaves, corresponding to the edge lengths  $l_1,\ldots,l_r$ and target offsets $c_1,\ldots,c_m$ in the tropical moduli.

This combinatorial type may degenerate as we move towards the boundary of the fixed locus. Note, however, that since the fixed locus only contains degenerations of contracted components, and every leaf is non-contracted by stability, the number of leaves $m$ remains constant on the entire fixed locus (whereas the number of edges may exceed $r$ over the boundary).

We assume for simplicity that the generic combinatorial type takes the following form:
\begin{center}
\begin{tikzpicture}

\draw[fill=blue] (0,-0.5) circle[radius=2pt];
\draw[->,color=blue] (0,-0.5) to (5,-0.5);

\draw[fill] (1.5,3.5) circle[radius=2pt];
\draw (1.5,3.6) node[above]{\small$v_1$};

\draw (1.5,2.1) node{\vdots};

\draw (1.5,3.5) -- (3.5,2);
\draw (2.5,2.8) node[above]{\small$l_1$};
\draw[blue] (2.5,2.7) node[below]{\small$3d_1$};

\draw[<->] (0,3.5) -- (1.3,3.5);
\draw (0.65,3.5) node[above]{\small$c_1$};

\draw (1.5,0.5) -- (3.5,2);
\draw (2.5,1.2) node[below]{\small$l_m$};
\draw[blue] (2.55,1.45) node[above]{\small$3d_m$};

\draw[fill] (1.5,0.5) circle[radius=2pt];
\draw (1.55,0.6) node[above]{\small$v_m$};

\draw[<->] (0,0.5) -- (1.3,0.5);
\draw (0.65,0.5) node[above]{\small$c_m$};

\draw[fill] (3.5,2) circle[radius=2pt];
\draw (3.55,2.1) node[above]{\small$v_0$};

\draw[->] (3.5,2) -- (4.5,2);
\draw (4.45,2) node[above]{\small$x$};

\end{tikzpicture}	
\end{center}
Indeed, this is always the local structure around each vertex. To prove that $\e(\LogOb|_{\Fup_\Theta})$ is pure weight, it is equivalent to prove that it is pure weight when pulled back to each factor of the fixed locus $\Fup_\Theta$. We may thus consider each vertex individually. Up to a finite cover, we have $\Fup_{\Theta}=\ol{\Mcal}_{0,m+1}$. We begin by considering the target offset bundles $\OO(c_i)$. Away from the boundary of $\Fup_{\Theta}$, we compute
\begin{equation*} \e (\OO(c_i)) = 3\lambda_{j(i)} + 3d \psi_x + 3d_i \psi_{q_i}\end{equation*}
where $p_{j(i)}$ is the fixed point mapped to by the non-contracted leaf component $C_{v_i}$ away from its intersection with $C_{v_0}$. As the combinatorial type degenerates, this formula must be modified with appropriate boundary corrections, which we now describe. Given a partition $A \sqcup B = \{1,\ldots,m,x\}$ with $x \in B$, we let 
\begin{equation*} D_{(A,B)} \subseteq \overline{\Mcal}_{0,m+1} \end{equation*}
denote the corresponding boundary divisor. A direct calculation local to each boundary divisor then gives the following global formula for $\e (\OO(c_i))$ on $\Fup_{\Theta}=\ol\Mcal_{0,m+1}$:
\begin{equation}\label{eqn: formula for O(ci)} \e (\OO(c_i)) = 3\lambda_{j(i)} + 3d \psi_x + 3d_i \psi_{q_i} - \sum_{\substack{(A,B) \\ \text{with } i \in A}} \left( \sum_{j \in A} 3d_j \cdot D_{(A,B)} \right).\end{equation}
Now, for $j \neq i$ we have the following boundary relation, obtained by pullback from $\ol\Mcal_{0,3}$:
\begin{equation*} \psi_x = ( i \ j\ |\ x) = \sum_{\substack{(A,B)\\ \text{with }i,j \in A}} D_{(A,B)}.\end{equation*}
Using $3d=3d_1+\ldots+3d_m$ we thus obtain:
\begin{equation*} (3d -3d_i) \psi_x = \sum_{j \neq i} 3d_j \psi_x = \sum_{\substack{(A,B) \\ \text{with } i \in A}}\bigg( \sum_{\substack{j \in A \\ j \neq i}} 3d_j\cdot D_{(A,B)} \bigg).\end{equation*}
On the other hand, we have the following relation involving the remaining cotangent line classes in \eqref{eqn: formula for O(ci)}, again easily obtained by pullback from $\ol\Mcal_{0,3}$ (see also \cite{LeePandharipande}):
\begin{equation*} 3d_i \psi_x + 3d_i \psi_{q_i} = \sum_{\substack{(A,B) \\ \text{with } i \in A}} 3d_i \cdot D_{(A,B)}.\end{equation*}
Combining these two expressions, we see that the non-weight terms in \eqref{eqn: formula for O(ci)} cancel precisely, leaving us with
\begin{equation*} \e(\OO(c_i)) = 3\lambda_{j(i)} \end{equation*}
which is pure weight. It remains to show the same for the $\OO(r_j)$ terms. Recall (Lemma \ref{lem: LogOb description 1}) that these arose from the normal bundle of the local toric model for the moduli space of tropical stable maps. We will show that this bundle is non-equivariantly trivial on the fixed locus, which immediately implies that its Euler class is pure weight.

We begin by clarifying notation. The monoid $Q$ will be used to denote the minimal monoid corresponding to the least degenerate combinatorial type on the fixed locus. As already noted, this combinatorial type can degenerate over $\Fup_{\Theta}$, producing additional edge lengths but no additional target offsets. We denote the minimal monoid corresponding to such a degeneration by $Q^\prime$, so that we have a regular embedding
\begin{equation*} U_{Q^\prime} \subseteq \Aaff^{r^\prime + m} \end{equation*}
where $r^\prime \geq r$ is the number of edge lengths. The relevant piece of $\LogOb$ is then given, local to such a stratum, by the bundle:
\begin{equation*} \mathrm{N}_{U_{Q^\prime}|\Aaff^{r^\prime+m}}.\end{equation*}
On the other hand it is easy to see, by examining the defining equations, that the following square is cartesian
\bcd
U_Q \ar[r,hook] \ar[d,hook] \ar[rd,phantom,"\square"] & \Aaff^{r+m} \ar[d,hook] \\
U_{Q^\prime} \ar[r,hook] & \Aaff^{r^\prime+m}
\ecd
where the morphism $U_Q \hookrightarrow U_{Q^\prime}$ is induced by the generisation map $Q^\prime \to Q$. Since this intersection is transverse, the square is Tor-independent, and so we have an identification
\begin{equation*} \mathrm{N}_{U_{Q^\prime}|\Aaff^{r^\prime + m}} = \mathrm{N}_{U_Q|\Aaff^{r+m}}\end{equation*}
where as usual we suppress pullbacks. Along the stratum under consideration, the morphism $\Fup_{\Theta} \to U_{Q^\prime}$ factors through $U_Q$, and we may therefore identify the relevant piece of $\LogOb$ with the bundle
\begin{equation*} \mathrm{N}_{U_Q|\Aaff^{r+m}} \end{equation*}
corresponding to the least degenerate combinatorial type. There is such an identification along every stratum of $\Fup_{\Theta}$, and since these are compatible along generisations we obtain a global identification. But the morphism $\Fup_{\Theta} \to U_Q$ factors through the origin (since all the tropical parameters persist on the fixed locus), and so this bundle is pulled back from a point, hence trivial. \end{proof}

The upshot of the previous result is that in our computations we may discard all classes which are not pure weight. This includes in particular all boundary correction terms. Consequently, the class $\e(\LogOb|_{\Fup_{\Theta}})$ may be computed from the generic combinatorial type of the fixed locus, since all corrections arising from further degenerations will be discarded. Given the techniques described in \S \ref{sec: computing LogOb} for calculating $\e(\OO(c_i))$ and $\e(\OO(r_j))$, this is now an easy process.

\subsubsection{Graph splitting formalism} \label{sec: graph splitting} The discussion thus far shows how to compute all of the equivariant classes appearing in \eqref{eqn: localisation term}, and hence gives a complete in-principle method for carrying out the localisation computation.

However, for the purposes of proving general formulae, as well as efficient computer calculations, it is necessary to be more explicit. In this section, we uncover a recursive structure governing the localisation contributions, which we leverage to carry out our computations.

The basic idea is to recursively split each localisation graph at the root vertex supporting the marking leg $x$. There are two possible situations, depending on the valency of this vertex:
\begin{center}
\begin{tikzpicture}
	\draw (-3.2,-0.5) node{(I)};
	
	\draw[fill=black] (0,0) circle[radius=2pt];
	\draw (0,0) -- (0,0.5);
	\draw (-0.05,0.4) node[right]{\small$x$};
	
	\draw (0,0) -- (-1,-1);
	\draw (-0.65,-0.6) node[above]{\small$d_1$};
	\draw[fill=black] (-1,-1) circle[radius=2pt];
	\draw (-1,-1.3) node{$\vdots$};
	
	\draw (0,-1) node{$\cdots$};
	
	\draw (0,0) -- (1,-1);
	\draw (0.65,-0.6) node[above]{\small$d_k$};
	\draw[fill=black] (1,-1) circle[radius=2pt];
	\draw (1,-1.3) node{$\vdots$};
	
	\draw (2,-0.5) node{\LARGE$\rightsquigarrow$};
	
	\draw[fill=black] (3.2,0) circle[radius=2pt];
	\draw (3.2,0) -- (3.2,0.5);
	\draw (3.15,0.4) node[right]{\small$x$};
	\draw (3.2,0) -- (3.2,-1);
	\draw (3.3,-0.5) node[left]{\small$d_1$};
	\draw[fill=black] (3.2,-1) circle[radius=2pt];
	\draw (3.2,-1.3) node{$\vdots$};
	
	\draw (4,-0.5) node{$\cdots$};
	
	\draw[fill=black] (4.8,0) circle[radius=2pt];
	\draw (4.8,0) -- (4.8,0.5);
	\draw (4.75,0.4) node[right]{\small$x$};
	\draw (4.8,0) -- (4.8,-1);
	\draw (4.725,-0.5) node[right]{\small$d_k$};
	\draw[fill=black] (4.8,-1) circle[radius=2pt];
	\draw (4.8,-1.3) node{$\vdots$};
	
	\draw (6.75,-0.5) node{$(k \geq 2)$};
	\draw (-3.2,-4) node{(II)};
	
	\draw[fill=black] (0,-4) circle[radius=2pt];
	\draw (0,-4) -- (0,-3);
	\draw (0.1,-3.5) node[left]{\small$d_0$};
	\draw[fill=black] (0,-3) circle[radius=2pt];
	\draw (0,-3) -- (0,-2.5);
	\draw (-0.05,-2.6) node[right]{\small$x$};
	
	\draw (0,-4) -- (-1,-5);
	\draw (-0.65,-4.6) node[above]{\small$d_1$};
	\draw[fill=black] (-1,-5) circle[radius=2pt];
	\draw (-1,-5.3) node{$\vdots$};
	
	\draw (0,-5) node{$\cdots$};
	
	\draw (0,-4) -- (1.1,-5);
	\draw (0.65,-4.6) node[above]{\small$d_k$};
	\draw[fill=black] (1.1,-5) circle[radius=2pt];
	\draw (1.1,-5.3) node{$\vdots$};
	
	\draw (2,-4.5) node{\LARGE$\rightsquigarrow$};
	
	\draw[fill=black] (3.2,-4) circle[radius=2pt];
	\draw (3.2,-4) -- (3.2,-3.5);
	\draw (3.15,-3.6) node[right]{\small$x$};
	\draw (3.2,-4) -- (3.2,-5);
	\draw (3.3,-4.5) node[left]{\small$d_0$};
	\draw[fill=black] (3.2,-5) circle[radius=2pt];
	
	\draw[fill=black] (5,-4) circle[radius=2pt];
	\draw (5,-4) -- (5,-3.5);
	\draw (4.95,-3.6) node[right]{\small$x$};
	
	\draw (5,-4) -- (4,-5);
	\draw (4.35,-4.6) node[above]{\small$d_1$};
	\draw[fill=black] (4,-5) circle[radius=2pt];
	\draw (4,-5.3) node{$\vdots$};
	
	\draw (5,-5) node{$\cdots$};
	
	\draw (5,-4) -- (6,-5);
	\draw (5.65,-4.6) node[above]{\small$d_k$};
	\draw[fill=black] (6,-5) circle[radius=2pt];
	\draw (6,-5.3) node{$\vdots$};
\end{tikzpicture}
\end{center}

\begin{remark} It is important not to misinterpret the localisation graphs as tropical curves.\end{remark}

Each time we split, we compare the contribution of the input graph to the product of contributions of the output graphs. This ratio is referred to as the \emph{defect}. We have the following useful lemma:

\begin{lemma} The defect in $\pi_\star f^\star \OO_{\PP^2}(3)$ cancels with the defect in $\LogOb$.\end{lemma}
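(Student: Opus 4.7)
The approach is a direct local computation of both defects in each of the two splitting cases, verifying that they agree. I would work in an atomic neighbourhood of the fixed locus $\Fup_\Theta$ and exploit the identification $\OO_{\PP^2}(D) = \OO_{\PP^2}(3)$ to match equivariant weights.

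For $\pi_\star f^\star \OO_{\PP^2}(3)$, the normalisation exact sequence for the source curve expresses this K-theory class as a sum of contributions from each component of the curve, minus the fibres $\OO_{\PP^2}(3)|_{p}$ at each node. Splitting corresponds to normalising at a specific set of nodes (and duplicating a contracted vertex), so the resulting defect in K-theory is a signed sum of such fibres. Bookkeeping then gives defect $(3\lambda_{j(x)})^{-(k-1)}$ in Case (I) --- arising from $k-1$ missing fibres at the root fixed point $p_{j(x)}$ --- and defect $(3\lambda_{j(d_0)})^{-1}$ in Case (II), from a single missing fibre at the fixed point $p_{j(d_0)}$ at the far end of the $d_0$-edge.

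For $\LogOb$, I would apply Lemma \ref{lem: LogOb description 1}, which expresses $\LogOb$ in terms of target offsets $\OO(c_i)$ and relation parameters $\OO(r_j)$. In Case (I), the split removes $k-1$ tropical continuity relations from the input, each of the form $r_j = \mathsf{f}(v_0)$. In Case (II), the split preserves the total number of relations but introduces one new target offset $c_{v_0} = \mathsf{f}(v_0)$ at the intermediate vertex (which becomes a leaf of Output 1). The crucial input is the fact that $\mathsf{f}^\star 1$ represents $f^\star \OO_{\PP^2}(D) = f^\star \OO_{\PP^2}(3)$, so the equivariant $T$-weight of $\OO(\mathsf{f}(v))$ at the fixed point $p_{j(v)}$ is $3\lambda_{j(v)}$. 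This yields defect $(3\lambda_{j(x)})^{-(k-1)}$ in Case (I) and $(3\lambda_{j(d_0)})^{-1}$ in Case (II), matching the defects in $\pi_\star f^\star \OO_{\PP^2}(3)$.

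The main obstacle will be the rigorous extraction of the pure-weight contribution from each $\OO(r_j)$ or $\OO(c_{v_0})$. The local formulas of Section \ref{sec: computing LogOb} express these Euler classes in terms of tautological $\psi$-classes and boundary correction terms that are not individually pure weight; for instance the formula $\OO(\mathsf{f}(v_0)) = \ev_x^\star \OO_{\PP^2}(3) \otimes L_x^{3d}$ carries a spurious $3d\psi_x$ contribution. Theorem \ref{thm: LogOb pure weight} guarantees that these spurious terms cancel within the total $\LogOb$ restricted to the fixed locus, and making this cancellation explicit across all the relation and offset parameters entering the defect is the technical heart of the argument. The hypothesis $\lambda_0+\lambda_1+\lambda_2 = 0$ enters precisely here, ensuring that the fibre weights of $\OO(D) = \OO(3)$ at the three toric fixed points combine consistently with the equivariant structure on the edge-smoothing line bundles.
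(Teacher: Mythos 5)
Your proposal is correct and follows essentially the same route as the paper: a case-by-case analysis of the two splitting moves, with the defect in $\pi_\star f^\star \OO_{\PP^2}(3)$ computed from the normalisation sequence and the defect in $\LogOb$ read off from Lemma \ref{lem: LogOb description 1} via the $k-1$ vanishing relation parameters in Case (I) and the new target offset in Case (II), each evaluating to the pure weight $3\lambda$ of the relevant fixed point. Your closing concern about the non-pure-weight ($\psi$ and boundary) terms is resolved exactly as you suggest: the paper invokes Theorem \ref{thm: LogOb pure weight} at the outset ("all the classes involved are pure weight") to justify discarding them, so your argument matches the paper's, with your bookkeeping $(3\lambda)^{-(k-1)}$ and $(3\lambda)^{-1}$ agreeing with the paper's $(3\lambda_i)^{1-k}$ and $(3\lambda_j)^{-1}$.
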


\begin{proof} Note that all the classes involved are pure weight. We will deal with the two splitting pictures separately.

Case (I): Let $p_i$ be the torus-fixed point mapped to by the root vertex. The defect in $\pi_\star f^\star \OO_{\PP^2}(3)$ may be calculated from the normalisation sequence. There are $k$ factors of $(3\lambda_i)^{-1}$ coming from the nodes of the input graph which disappear after splitting, and one factor of $3\lambda_i$ coming from the contracted component associated to the root vertex of the input graph; the defect is thus $(3\lambda_i)^{1-k}$. On the other hand, the defect in $\LogOb$ is given by the $k-1$ relation parameters at the root vertex (which disappear after splitting). Since $\e(\OO(r_j)) = \ev_x^\star (3H)$ after discarding non-equivariant terms, each of these contributes a factor of $3\lambda_i$, and hence the overall defect is also $(3\lambda_i)^{1-k}$.

Case (II): Let $p_j$ be the torus-fixed point mapped to by the central $(k+1)$-valent vertex of the input graph. The defect in $\pi_\star f^\star \OO_{\PP^2}(3)$ is given by $(3\lambda_j)^{-1}$, coming from the single node which disappears after splitting. On the other hand, the defect in $\LogOb$ is given by (the inverse of) the target offset $c_0$ in the first output graph (the other parameters and relations are unchanged). We compute $\e(\OO(c_0))^{-1} = (3\lambda_j)^{-1}$, and so once again the defects cancel.\end{proof}

Therefore, at each step it is only necessary to calculate the defect arising from the normal bundle term, which amounts to a simple calculation on Deligne--Mumford space. Recursively, this expresses the contribution of each localisation graph in terms of the contributions of the so-called \emph{atomic graphs}
\begin{center}
\begin{tikzpicture}
	
	\draw[fill=black] (0,0) circle[radius=2pt];
	\draw (0,0) node[below]{\small$p_j$};
	
	\draw (0,0) -- (2,0);
	\draw (1,0) node[above]{\small$d$};
	
	\draw[fill=black] (2,0) circle[radius=2pt];
	\draw (2,0) node[below]{\small$p_i$};
	
	\draw (2,0) -- (2.5,0.5);
	\draw (2.6,0.65) node{\small$x$};

\end{tikzpicture}	
\end{center}
which, using the techniques of \S \ref{sec: localisation scheme}, we easily calculate to be:
\begin{equation}\label{eqn: atomic contribution} \left( \dfrac{(-1)^d}{d \cdot (d!)^2\cdot (\lambda_j-\lambda_i)^{2d-1}} \right) \cdot \prod_{a=1}^{d-1}\left( a \lambda_i + (3d-a)\lambda_j \right) \cdot \prod_{b=0}^{d-1} \left( (3d-b)\lambda_i + b \lambda_j \right).\end{equation}

\subsection{Tables of contributions}\label{sec: tables} The graph splitting algorithm described above is implemented in accompanying Sage code. The code is effective on an average laptop computer up to degree $8$. We use it to generate tables of component contributions, which we organise according to unordered multi-degree (see \S \ref{sec: decomposition of moduli space}). Note that, as we should expect, the sum of the contributions for each degree gives the corresponding maximal contact logarithmic Gromov--Witten invariant (as calculated for instance in \cite[Example 2.2]{GathmannMirror}).
\begin{center}
\begin{minipage}[t]{0.4\textwidth}
\noindent \center\textbf{Degree $1$}\\
\begin{tabular}{| c | c |}
	\hline
	\textbf{Multi-degree} & \textbf{Contribution} \\
	\hline \hline
	$(1,0,0)$ & $9$ \\
	\hline \hline
	\textbf{Total:} & $9$ \\
	\hline
\end{tabular}\medskip
\noindent \center\textbf{Degree $2$}\\
\begin{tabular}{| c | c |}
	\hline
	\textbf{Multi-degree} & \textbf{Contribution} \\
	\hline \hline
	$(2,0,0)$ & $63/4$ \\
	\hline
	$(1,1,0)$ & $18$ \\
	\hline \hline
	\textbf{Total:} & $135/4$ \\
	\hline
\end{tabular}\medskip
\noindent \center\textbf{Degree $3$}\\
\begin{tabular}{| c | c |}
	\hline
	\textbf{Multi-degree} & \textbf{Contribution} \\
	\hline \hline
	$(3,0,0)$ & $55$ \\
	\hline
	$(2,1,0)$ & $162$ \\
	\hline 
	$(1,1,1)$ & $27$ \\
	\hline\hline
	\textbf{Total:} & $244$ \\
	\hline
\end{tabular}\medskip
\noindent \center\textbf{Degree $4$}\\
\begin{tabular}{| c | c |}
	\hline
	\textbf{Multi-degree} & \textbf{Contribution} \\
	\hline \hline
	$(4,0,0)$ & $4,\!095/16$ \\
	\hline
	$(3,1,0)$ &  $936$ \\
	\hline 
	$(2,2,0)$ &  $1,\!089/2$ \\
	\hline
	$(2,1,1)$ & $576$ \\
	\hline\hline
	\textbf{Total:} & $36,\!999/16$ \\
	\hline
\end{tabular}
\end{minipage}
\begin{minipage}[t]{0.4\textwidth}
\noindent \center\textbf{Degree $5$}\\
\begin{tabular}{| c | c |}
	\hline
	\textbf{Multi-degree} & \textbf{Contribution} \\
	\hline \hline
	$(5,0,0)$ & $34,\!884/25$ \\
	\hline
	$(4,1,0)$ &  $6,\!120$ \\
	\hline 
	$(3,2,0)$ &  $8,\!190$ \\
	\hline
	$(3,1,1)$ &  $4,\!680$ \\
	\hline
	$(2,2,1)$ & $5,\!040$ \\
	\hline\hline
	\textbf{Total:} & $635,\!634/25$ \\
	\hline
\end{tabular}
\noindent \center\textbf{Degree $6$}\\
\begin{tabular}{| c | c |}
	\hline
	\textbf{Multi-degree} & \textbf{Contribution} \\
	\hline \hline
	$(6,0,0)$ & $33,\!649/4$ \\
	\hline
	$(5,1,0)$ & $43,\!092$ \\
	\hline 
	$(4,2,0)$ & $130,\!815/2$ \\
	\hline
	$(4,1,1)$ & $40,\!014$  \\
	\hline
	$(3,3,0)$ & $36,\!992$ \\
	\hline
	$(3,2,1)$ & $96,\!228$ \\
	\hline
	$(2,2,2)$ & $67,\!797/4$ \\
	\hline\hline
	\textbf{Total:} & $307,\!095$ \\
	\hline
\end{tabular}
\end{minipage} \newpage

\begin{minipage}[t]{0.4\textwidth}
\noindent \center\textbf{Degree $7$}\\
\begin{tabular}{| c | c |}
	\hline
	\textbf{Multi-degree} & \textbf{Contribution} \\
	\hline \hline
	$(7,0,0)$ & $2,\!664,\!090/49$ \\
	\hline
	$(6,1,0)$ & $318,\!780$ \\
	\hline 
	$(5,2,0)$ & $541,\!926$ \\
	\hline
	$(5,1,1)$ & $350,\!658$  \\
	\hline
	$(4,3,0)$ & $682,\!290$ \\
	\hline
	$(4,2,1)$ & $948,\!528$ \\
	\hline
	$(3,3,1)$ & $513,\!639$ \\
	\hline
	$(3,2,2)$ & $547,\!344$ \\
	\hline\hline
	\textbf{Total:} &  $193,\!919,\!175/49$ \\
	\hline
\end{tabular}
\end{minipage}
\begin{minipage}[t]{0.4\textwidth}
\noindent \center\textbf{Degree $8$}\\
\begin{tabular}{| c | c |}
	\hline
	\textbf{Multi-degree} & \textbf{Contribution} \\
	\hline \hline
	$(8,0,0)$ & $23,\!666,\!175/64$ \\
	\hline
	$(7,1,0)$ & $2,\!442,\!960$ \\
	\hline 
	$(6,2,0)$ & $4,\!601,\!610$ \\
	\hline
	$(6,1,1)$ & $3,\!116,\!880$ \\
	\hline
	$(5,3,0)$ & $6,\!375,\!600$ \\
	\hline
	$(5,2,1)$ & $9,\!448,\!560$ \\
	\hline
	$(4,4,0)$ & $28,\!227,\!969/8$ \\
	\hline
	$(4,3,1)$ & $11,\!139,\!552$ \\
	\hline
	$(4,2,2)$ & $6,\!045,\!264$ \\
	\hline
	$(3,3,2)$ & $6,\!407,\!712$ \\
	\hline\hline
	\textbf{Total:} &  $3,\!442,\!490,\!759/64$ \\
	\hline
\end{tabular}
\end{minipage}
\end{center}

\subsection{Conjectures}\label{sec: conjectures} \label{sec: hypergeometric formulae} Based on the low-degree calculations presented above, we conjecture general formulae for some of the component contributions. We then provide some theoretical evidence for these in Proposition~\ref{prop: hypergeometric conjecture equivalent to combinatorial conjecture}.

The conjectures are most conveniently stated by organising the component contributions according to the \emph{ordered} multi-degree, as opposed to the unordered multi-degree employed thus far. This is a fairly trivial refinement, amounting to simply dividing each unordered multi-degree contribution by its obvious symmetries. Given an unordered multi-degree $\mathbf{d}=(d_0,d_1,d_2)$, we let $\mathrm{A}(\mathbf{d})$ denote the number of ordered multi-degrees which induce $\mathbf{d}$ upon forgetting the ordering. For instance, we have $\mathrm{A}(d,0,0) = 3$ and:
\begin{align*}
& \mathrm{A}(d_1,d_2,0) = \begin{cases} 3 \qquad \text{if } d_1=d_2, \\ 6 \qquad \text{if } d_1 \neq d_2.\end{cases}
\end{align*}
The ordered multi-degree contribution is then obtained by dividing the unordered multi-degree contribution by $\mathrm{A}(\mathbf{d})$:
\begin{equation*} \operatorname{C}_{\operatorname{ord}}(\mathbf{d}) =  \operatorname{C}_{\operatorname{unord}}(\mathbf{d})/\mathrm{A}(\mathbf{d}).\end{equation*}

\begin{conjecture} \label{conj: hypergeometric conjecture} We have the following hypergeometric expressions for the ordered multi-degree contributions:
\begin{align} \label{eqn: multi-degree d hypergeometric conjecture}\operatorname{C}_{\operatorname{ord}}(d,0,0) = \dfrac{1}{d^2\ } {4d-1 \choose d} \qquad & (d \geq 1), \\[8pt]
\label{eqn: degree d1 d2 conjecture} \operatorname{C}_{\operatorname{ord}}(d_1,d_2,0) = \dfrac{6}{d_1 d_2} {4d_1+2d_2-1 \choose d_1 - 1} {4d_2 + 2d_1 - 1 \choose d_2 - 1} \qquad & (d_1,d_2\geq 1).
\end{align}
\end{conjecture}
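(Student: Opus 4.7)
The plan is to combine the virtual push-forward formula of Theorem B with the functorial virtual localisation scheme of \S\ref{sec: localisation scheme} to express each ordered multi-degree contribution as a sum over localisation graphs, each weighted by a product of atomic contributions \eqref{eqn: atomic contribution}. For the case $\operatorname{C}_{\operatorname{ord}}(d,0,0)$, the relevant fixed loci correspond to graphs whose edges all lie along a single toric divisor $D_0 \cong \PP^1$, so the combinatorial structure reduces to trees of rational multi-covers of $D_0$ with endpoints at the two torus-fixed points $p_1, p_2 \in D_0$ and carrying a single marking leg. Recursively applying the graph splitting of \S\ref{sec: graph splitting} at the root vertex then decomposes each graph contribution into a product of atomic contributions along its edges, so that the final sum is indexed by rooted plane trees of total edge-weight $d$.

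The next step is to impose the relation $\lambda_0 + \lambda_1 + \lambda_2 = 0$ and observe that, since the total contribution is the degree of a zero-dimensional cycle, all equivariant variables must cancel. What remains is a purely combinatorial identity: a weighted sum over rooted plane trees of weight $d$ should equal $\frac{1}{d^2}\binom{4d-1}{d}$. I would then attempt to evaluate this identity via the Lagrange inversion theorem by producing a generating function $F(y) = \sum_{d \geq 1} \frac{1}{d^2} \binom{4d-1}{d}\, y^d$ and searching for a functional equation satisfied by $F$ whose expansion matches the tree-recursion induced by graph splitting. In essence, this is the strategy behind Proposition~\ref{prop: hypergeometric conjecture equivalent to combinatorial conjecture}, which reduces \eqref{eqn: multi-degree d hypergeometric conjecture} to the purely combinatorial Conjecture~\ref{conj: combinatorial conjecture}.

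For the two-divisor formula \eqref{eqn: degree d1 d2 conjecture}, the product structure of the right-hand side strongly suggests splitting each contributing localisation graph at the unique vertex lying over $p_2 = D_0 \cap D_1$, factorising the sum into independent contributions from trees in $D_0$ and trees in $D_1$, each of which should resemble a modified version of the $(d,0,0)$ problem with extra weight data arising from the normal bundle to the shared vertex. The shift from $4d-1$ to $4d_1 + 2d_2 - 1$ in the binomial coefficient is consistent with this picture, the $2d_2$ correction arising naturally from the local contribution at $p_2$.

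The main obstacle throughout is combinatorial rather than geometric. Even after the geometric reduction to an explicit tree sum is performed, proving the resulting hypergeometric identity appears to require either a novel bijective argument or a delicate generating-function manipulation that is not presently available; this is precisely why the statement remains a conjecture, and why the authors' theoretical evidence takes the form of an equivalence with the combinatorial Conjecture~\ref{conj: combinatorial conjecture}, which has been verified numerically but resists a general proof.
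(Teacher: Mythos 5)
You have not produced a proof, and neither does the paper: the statement is stated there as Conjecture~\ref{conj: hypergeometric conjecture} precisely because the step you defer at the end is the step nobody can do. Your geometric reduction for $\operatorname{C}_{\operatorname{ord}}(d,0,0)$ is essentially the paper's own evidence, Proposition~\ref{prop: hypergeometric conjecture equivalent to combinatorial conjecture}: after the specialisation $\lambda_2=0$ (hence $\lambda_0=-\lambda_1$) the only surviving localisation graphs are the star-shaped ones with the marking at the vertex over $p_1$ and $r$ edges of degrees $d_1,\ldots,d_r$ running to $p_2$ --- not general rooted plane trees --- and the graph contribution is the product of atomic contributions \eqref{eqn: atomic contribution} times a normal-bundle defect computed by $\psi$-integrals on $\ol{\Mcal}_{0,r+1}$, giving $2^{r-1}d^{r-2}\prod_i d_i$. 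What remains is exactly the identity
\begin{equation*}
\sum_{(d_1,\ldots,d_r)\vdash d}\dfrac{2^{r-1}\cdot d^{r-2}}{\#\!\operatorname{Aut}(d_1,\ldots,d_r)}\prod_{i=1}^r\dfrac{(-1)^{d_i-1}}{d_i}\binom{3d_i}{d_i}\;=\;\dfrac{1}{d^2}\binom{4d-1}{d},
\end{equation*}
which is Conjecture~\ref{conj: combinatorial conjecture}. Your proposal to settle it by Lagrange inversion names a technique but exhibits neither a functional equation for the generating function $\sum_d \frac{1}{d^2}\binom{4d-1}{d}y^d$ nor a tree-recursion that the left-hand side demonstrably satisfies; absent that, the essential step is missing, and the argument terminates exactly where the paper's does --- with numerical verification rather than a proof.

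For the second formula \eqref{eqn: degree d1 d2 conjecture} the gap is larger still. The paper offers no reduction at all in this case, and your suggestion to split at the vertex over $D_0\cap D_1$ is only a heuristic: for multi-degree $(d_1,d_2,0)$ the contributing fixed loci involve graphs with edges along both $D_0$ and $D_1$ and vertices at all three fixed points, no specialisation of weights is proposed that kills the unwanted graphs, and no analogue of the clean partition sum is derived, so the claimed factorisation into two modified one-divisor problems (and the passage from $4d-1$ to $4d_1+2d_2-1$) is unverified pattern-matching against the conjectured answer rather than a computation. In short, your write-up is a fair account of why the statement is plausible and of the paper's own partial evidence, but it contains no new idea that closes either the combinatorial identity in the $(d,0,0)$ case or the entirely open $(d_1,d_2,0)$ case.
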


\begin{remark} After taking the logarithm, \eqref{eqn: multi-degree d hypergeometric conjecture} is the coefficient of the diagonal term of the $3$-Kronecker quiver (according to a conjecture of Gross, proved by Reineke \cite[Theorem 6.4]{Reineke}). This supports the proposed correspondence with the scattering diagram calculations of~\cite{Graefnitz} (see Remark \ref{rmk: Graefnitz}). On the other hand, it is not immediately clear how to obtain \eqref{eqn: degree d1 d2 conjecture} via this same heuristic.\end{remark}

\begin{conjecture} The ordered multi-degree contributions enjoy the following integrality property:
\begin{equation*} \operatorname{gcd}(d_0,d_1,d_2)^2 \cdot \operatorname{C}_{\operatorname{ord}}(d_0,d_1,d_2) \in \Z_{\geq 0}.\end{equation*}\end{conjecture}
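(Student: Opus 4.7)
The factor $\gcd(d_0,d_1,d_2)^2$ is strongly reminiscent of the multiple-cover denominators which permeate Gromov--Witten theory. My plan is to establish the conjecture via a BPS-type structure on the ordered contributions, producing integers $\operatorname{n}_{\operatorname{ord}}(\mathbf{d}) \in \Z$ satisfying
\[ \operatorname{C}_{\operatorname{ord}}(d_0,d_1,d_2) \;=\; \sum_{k \,\mid\, \gcd(d_0,d_1,d_2)} \frac{1}{k^2}\,\operatorname{n}_{\operatorname{ord}}\!\bigl(d_0/k,\,d_1/k,\,d_2/k\bigr). \]
M\"obius inversion would then express $\gcd(\mathbf{d})^2 \cdot \operatorname{C}_{\operatorname{ord}}(\mathbf{d})$ as an integer combination of $\operatorname{n}_{\operatorname{ord}}$ values, reducing the problem to integrality of these BPS numbers.

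To locate the $1/k^2$ factors geometrically, I would revisit the graph-splitting recursion of \S\ref{sec: graph splitting}. The atomic contribution \eqref{eqn: atomic contribution} of a weight-$d$ edge carries an explicit prefactor $1/(d \cdot (d!)^2 \cdot (\lambda_j-\lambda_i)^{2d-1})$; combined with the numerator products, this leaves a residual $1/d^2$ behaviour parallel to the usual multiple-cover formula for $\PP^1$ in a surface. The key step is to show that when a full localisation graph of multi-degree $\mathbf{d}$ admits a uniform common factor $k$ among all of its edge weights, these atomic contributions reorganise into a single overall prefactor $1/k^2$ multiplying the contribution of the \emph{primitive} graph obtained by dividing each weight by $k$. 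Summing over graphs would then yield the desired divisor-sum formula for $\operatorname{C}_{\operatorname{ord}}(\mathbf{d})$; a careful accounting of automorphisms of localisation graphs, and of the interaction between $\operatorname{LogOb}$ and the rescaled edge weights (see \S\ref{sec: computing LogOb}), will be required to match coefficients exactly.

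The main obstacle is proving integrality of the primitive quantities $\operatorname{n}_{\operatorname{ord}}(\mathbf{d})$. Three complementary routes suggest themselves. First, one can attempt to interpret $\operatorname{n}_{\operatorname{ord}}$ as a genuine log-BPS invariant of $\Xcal_0$ in the sense of Bousseau, counting primitive logarithmic stable maps after the multiple-cover contributions have been stripped away; integrality would then follow from a geometric argument adapted from the smooth-divisor case. Second, one can attempt to match $\operatorname{n}_{\operatorname{ord}}$ with the tropical/scattering invariants of \cite{Graefnitz} along the heuristic correspondence of Remark~\ref{rmk: Graefnitz}; the integrality of the latter is already well-understood, so a precise dictionary would immediately yield the result. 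Third, in the cases governed by Conjecture~\ref{conj: hypergeometric conjecture}, one can reduce integrality to $p$-adic divisibility statements for products of binomial coefficients, accessible via Kummer's theorem on the $p$-adic valuation of $\binom{a+b}{a}$. I expect this last route to settle the conjecture for the multi-degrees $(d,0,0)$ and $(d_1,d_2,0)$ outright, while the general case will demand either the log-BPS or the scattering-diagram approach.
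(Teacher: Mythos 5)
This statement is an open conjecture in the paper: the authors offer no proof of the integrality property, so there is nothing to compare your argument against except the numerical tables, and unfortunately those tables already refute the central ansatz of your proposal. If you posit
\begin{equation*} \operatorname{C}_{\operatorname{ord}}(\mathbf{d}) \;=\; \sum_{k \mid \gcd(\mathbf{d})} \frac{1}{k^2}\,\operatorname{n}_{\operatorname{ord}}(\mathbf{d}/k) \end{equation*}
with $\operatorname{n}_{\operatorname{ord}} \in \Z$, then M\"obius inversion and the degree $1,2$ data force $\operatorname{n}_{\operatorname{ord}}(2,0,0) = \tfrac{21}{4} - \tfrac{1}{4}\cdot 3 = \tfrac{9}{2}$, and likewise $\operatorname{n}_{\operatorname{ord}}(2,2,0) = \tfrac{363}{2} - \tfrac{3}{4} = \tfrac{723}{4}$ and $\operatorname{n}_{\operatorname{ord}}(2,2,2) = \tfrac{67797}{4} - \tfrac{27}{4} = \tfrac{33885}{2}$, none of which are integers. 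So a BPS structure with the bare kernel $1/k^2$ cannot hold. Note that the multiple-cover contributions actually used in this paper (from Gross--Pandharipande--Siebert, e.g.\ the $3/4$ and $10/9$ for double and triple covers of flex lines) have the shape $\frac{1}{k^2}\binom{k(3d'-1)-1}{k-1}$, i.e.\ they carry binomial factors beyond $1/k^2$; any divisor-sum structure on the refined contributions would have to incorporate a kernel of this kind, and even its existence for the component-refined invariants is not established anywhere.

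Beyond this numerical obstruction, the two substantive steps of your plan are both unproven. The claim that localisation graphs all of whose edge weights are divisible by $k$ reorganise into $1/k^2$ times the contribution of the primitive graph is not a formal consequence of \eqref{eqn: atomic contribution}: the $\LogOb$ factors, the normal-bundle defects on $\ol{\Mcal}_{0,r+1}$, and the graph automorphisms do not rescale multiplicatively under dividing edge weights by $k$, and graphs of multi-degree $\mathbf{d}$ with $k \mid \gcd(\mathbf{d})$ need not have all edge weights divisible by $k$ in the first place. And the integrality of the hypothetical primitive numbers is deferred to three routes that are all speculative: a log-BPS interpretation for the logarithmically singular $\Xcal_0$ does not yet exist, the dictionary with the scattering calculations of Remark \ref{rmk: Graefnitz} is explicitly only heuristic in the paper, and the $p$-adic route for $(d,0,0)$ and $(d_1,d_2,0)$ rests on Conjecture \ref{conj: hypergeometric conjecture}, which is itself unproven. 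As it stands the proposal is a research programme whose first structural step is already inconsistent with the computed contributions, so the conjecture remains open.
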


We conclude by providing some theoretical evidence for the conjectures. To be more precise, we will show that Conjecture \ref{conj: hypergeometric conjecture} \eqref{eqn: multi-degree d hypergeometric conjecture} is equivalent to the following purely combinatorial formula:
\begin{conjecture} \label{conj: combinatorial conjecture} Fix an integer $d \geq 1$. Then we have
\begin{equation}\label{eqn: combinatorial conjecture}
	\sum_{(d_1,\ldots,d_r) \vdash d} \dfrac{2^{r-1} \cdot d^{r-2}}{\#\! \operatorname{Aut}(d_1,\ldots,d_r)} \prod_{i=1}^r \dfrac{(-1)^{d_i-1}}{d_i} {3d_i \choose d_i} = \dfrac{1}{d^2} {4d-1 \choose d} \end{equation}
where the sum is over strictly positive unordered partitions of $d$ (of any length).\end{conjecture}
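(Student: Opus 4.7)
The plan is to prove the identity by translating it into a generating function identity and then applying Lagrange--B\"urmann inversion twice. Set
$$a_d = \frac{(-1)^{d-1}}{d}{3d \choose d}, \qquad A(x) = \sum_{d \geq 1} a_d x^d.$$
Using the standard equivalence between unordered-partition sums weighted by $1/\#\operatorname{Aut}$ and coefficients of powers, namely
$$[x^d]\,A(x)^r = \sum_{(d_1,\ldots,d_r) \vdash d \ \text{unordered}} \frac{r!}{\#\operatorname{Aut}(d_1,\ldots,d_r)} \prod_{i=1}^r a_{d_i},$$
the left-hand side of \eqref{eqn: combinatorial conjecture} becomes
$$\text{LHS} = \sum_{r \geq 1} \frac{2^{r-1} d^{r-2}}{r!}\,[x^d]\,A(x)^r = \frac{1}{2d^2}\,[x^d]\!\left(e^{2dA(x)} - 1\right) = \frac{1}{2d^2}\,[x^d]\,e^{2dA(x)},$$
where the last equality uses $d \geq 1$. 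The whole problem is thus reduced to computing $[x^d]\,e^{2dA(x)}$.

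The second step is to identify $e^{A(x)}$ as an algebraic function. Let $w(x)$ be defined implicitly by $w = -x(1+w)^3$, with $w(0) = 0$. Applying Lagrange inversion with $\phi(w) = -(1+w)^3$ gives
$$[x^n]\log(1+w) = \frac{1}{n}\,[w^{n-1}]\,\frac{1}{1+w}\,\big(-(1+w)^3\big)^n = \frac{(-1)^n}{n}{3n-1 \choose n-1} = \frac{(-1)^n}{3n}{3n \choose n},$$
so that $A(x) = -3\log(1+w)$, i.e.\ $e^{A(x)} = (1+w)^{-3}$. In particular, $e^{2dA(x)} = (1+w)^{-6d}$.

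For the third step, substitute $v = -w$ so that $v = x(1-v)^3$ and $(1+w)^{-6d} = (1-v)^{-6d}$. A second application of Lagrange inversion, now with $\phi(v) = (1-v)^3$, yields
$$[x^d](1-v)^{-6d} = \frac{1}{d}\,[v^{d-1}]\,6d(1-v)^{-6d-1}(1-v)^{3d} = 6\,[v^{d-1}](1-v)^{-3d-1} = 6\,{4d-1 \choose d-1}.$$
Finally, the identity ${4d-1 \choose d-1} = \tfrac{d}{3d}{4d-1 \choose d} = \tfrac{1}{3}{4d-1 \choose d}$ gives $[x^d]\,e^{2dA(x)} = 2{4d-1 \choose d}$, and therefore
$$\text{LHS} = \frac{1}{2d^2}\cdot 2{4d-1 \choose d} = \frac{1}{d^2}{4d-1 \choose d} = \text{RHS}.$$

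The main conceptual hurdle is step two: recognising $A(x)$ as $-3\log(1+w)$ for an algebraic $w$ satisfying $w = -x(1+w)^3$. Once this identification is made, everything else is routine Lagrange inversion, and the proof becomes essentially one-line per step. The small verifications to be careful with are the sign conventions in the first Lagrange inversion, the identity ${3n-1 \choose n-1} = \tfrac{1}{3}{3n \choose n}$, and the standard expansion $(1-v)^{-m} = \sum_k {m+k-1 \choose k} v^k$ used in the second inversion; none of these should present real difficulty.
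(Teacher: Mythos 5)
Your proof is correct, and it is worth emphasising that the paper itself offers \emph{no} proof of this statement: the authors explicitly state that they are unable to prove Conjecture \ref{conj: combinatorial conjecture} and have only verified it numerically up to $d=50$. Your argument therefore settles something left open in the paper. I checked each step: the conversion of the unordered-partition sum into $\frac{1}{2d^2}[x^d]e^{2dA(x)}$ via the $r!/\#\operatorname{Aut}$ count of orderings is right; the first Lagrange inversion with $w=-x(1+w)^3$ correctly gives $[x^n]\log(1+w)=\frac{(-1)^n}{n}\binom{3n-1}{n-1}=\frac{(-1)^n}{3n}\binom{3n}{n}$, so $A(x)=-3\log(1+w)$; and the second inversion with $v=x(1-v)^3$ gives $[x^d](1-v)^{-6d}=6\binom{4d-1}{d-1}=2\binom{4d-1}{d}$, yielding exactly the right-hand side. (Sanity checks at $d=1,2$ confirm the intermediate identity $[x^d]e^{2dA(x)}=2\binom{4d-1}{d}$: for $d=2$ one gets $42=2\binom{7}{2}$ and LHS $=21/4$.) Your route is essentially the second strategy the authors gesture at --- encoding both sides into hypergeometric generating functions --- but carried to completion by recognising $e^{A(x)}$ as the algebraic function $(1+w)^{-3}$, which is the one genuinely non-routine observation. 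Given that Proposition \ref{prop: hypergeometric conjecture equivalent to combinatorial conjecture} shows this combinatorial identity is equivalent to the enumerative formula \eqref{eqn: multi-degree d hypergeometric conjecture}, your argument upgrades that part of Conjecture \ref{conj: hypergeometric conjecture} to a theorem; you should write this up.
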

Unfortunately, we are unable to prove this conjecture itself (though we have verified it up to $d=50$). We note that, indexing the conjugacy classes of $\mathrm{S}_d$ by partitions and using the formula $d!/(\#\!\operatorname{Aut}(d_1,\ldots,d_r) \cdot \Pi_{i=1}^r d_i )$ for the size of each such class, the conjecture may be recast as a formula for the total sum of a certain class function on $\mathrm{S}_d$. Alternatively, one can encode the right-hand side and the product factors on the left-hand side into hypergeometric generating functions, and the conjecture then asserts a non-trivial relationship between these functions.

\begin{proposition} \label{prop: hypergeometric conjecture equivalent to combinatorial conjecture} Conjecture \ref{conj: hypergeometric conjecture} \eqref{eqn: multi-degree d hypergeometric conjecture} is equivalent to Conjecture \ref{conj: combinatorial conjecture}.\end{proposition}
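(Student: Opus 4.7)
The plan is to compute $C_{\operatorname{ord}}(d,0,0)$ directly using the virtual localisation scheme of \S \ref{sec: localisation scheme} and to show that the resulting sum coincides with the left-hand side of \eqref{eqn: combinatorial conjecture}. Since the right-hand sides of \eqref{eqn: multi-degree d hypergeometric conjecture} and \eqref{eqn: combinatorial conjecture} are tautologically identical, this identification immediately yields the claimed equivalence.

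First, apply localisation to the multi-degree $(d,0,0)$ piece of $\iota_\star [\Kup^{\log}(\Xcal_0)]^{\virt}$. The multi-degree condition forces every torus-fixed stable map to factor through $D_0$, so the contributing fixed loci are indexed by bipartite trees whose vertices alternate between the two torus-fixed points $p_1, p_2 \in D_0$, with a single marked leg attached at one vertex. Using the graph-splitting formalism of \S \ref{sec: graph splitting} together with the atomic contribution formula \eqref{eqn: atomic contribution}, the contribution of each such graph factors as a product of atomic edge-contributions multiplied by defect terms (essentially Deligne--Mumford integrals at internal vertices), all of which are pure weight by Theorem \ref{thm: LogOb pure weight}.

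Second, reorganise the sum according to the partition $(d_1, \ldots, d_r) \vdash d$ of ``branch degrees'' at the marking vertex. A direct calculation using \eqref{eqn: atomic contribution}, with $\lambda_i, \lambda_j$ specialised to the weights at $p_1$ and $p_2$ and subject to $\lambda_0 + \lambda_1 + \lambda_2 = 0$, should yield a factor of $\frac{(-1)^{d_i-1}}{d_i}\binom{3d_i}{d_i}$ per branch once the weight denominators cancel against the numerator and against the $\e(\pi_\star f^\star \OO_{\PP^2}(\Delta))/\e(\LogOb)$ contributions computed in \S \ref{subsection computing numerator} and \S \ref{subsection computing LogOb}. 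The remaining combinatorial prefactor is obtained by summing over all possible subtree decorations ``beyond'' the marked vertex.

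The main obstacle is verifying that this residual combinatorial prefactor simplifies to the clean closed form $\frac{2^{r-1} \cdot d^{r-2}}{\#\operatorname{Aut}(d_1,\ldots,d_r)}$. The $d^{r-2}$ factor is evocative of Cayley's formula for labelled trees, suggesting that the resummation over subtree topologies should reduce to a Pr\"ufer-sequence count; the $2^{r-1}$ factor should originate from the bipartite-orientation freedom along each edge; and the automorphism quotient should emerge from the symmetry among equal-degree branches. This combinatorial identity, though concrete, is the technical heart of the argument; once established, substituting back into the localisation sum produces $C_{\operatorname{ord}}(d,0,0)$ in precisely the form of the left-hand side of \eqref{eqn: combinatorial conjecture}, completing the proof of equivalence.
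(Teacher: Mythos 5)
Your overall strategy---computing $C_{\operatorname{ord}}(d,0,0)$ by localisation on the component $\Kup_{0,1}(D_0,d)\subseteq\Kup(\Delta)$ and matching the result with the left-hand side of \eqref{eqn: combinatorial conjecture}, the equivalence then being immediate because the right-hand sides of \eqref{eqn: multi-degree d hypergeometric conjecture} and \eqref{eqn: combinatorial conjecture} coincide---is the same as the paper's. But there is a genuine gap exactly at what you yourself call the technical heart: you never establish that the sum over all bipartite tree topologies collapses to a single term per partition with prefactor $2^{r-1}d^{r-2}/\#\operatorname{Aut}(d_1,\ldots,d_r)$, and the route you sketch (a Pr\"ufer-sequence count over subtree decorations, $2^{r-1}$ from ``bipartite-orientation freedom'') is not how the identity arises and is unlikely to go through: with generic weights the contributions of the deeper trees depend nontrivially on $\lambda_0,\lambda_1,\lambda_2$ and do not organise themselves into one clean term per partition, so the left-hand side of \eqref{eqn: combinatorial conjecture}, which has no residual sum over topologies, cannot be reached this way without substantial extra cancellation arguments.

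The paper's proof removes this obstacle by the specialisation $\lambda_2=0$, which is legitimate because $\lambda_2$ never appears in a denominator of a localisation contribution. Under this specialisation every atomic graph of type $(2)$ in \S\ref{sec: graph splitting} (marking at the far vertex) carries a factor of $\lambda_2$ by \eqref{eqn: atomic contribution} and hence vanishes, so the only surviving localisation graphs are the star graphs with the marked vertex at $p_1$ and $r$ edges of degrees $d_1,\ldots,d_r$ to $p_2$: exactly one graph per unordered partition, with no sum over subtree shapes at all. The per-edge atomic factor is then $\frac{(-1)^{d_i-1}}{d_i^2}\binom{3d_i}{d_i}$ (note $d_i^2$, not the $d_i$ in your per-branch factor), and the prefactor $2^{r-1}d^{r-2}\prod_i d_i$ is not a tree count: it is the normal-bundle defect, a single integral $\int_{\ol{\Mcal}_{0,r+1}} 2^{r-1}\lambda_1^{2r-2}\big/\prod_{i=1}^r(\lambda_1/d_i-\psi_i)$ evaluated by expanding in $\psi$-classes and applying the multinomial theorem, which is where $d^{r-2}=(d_1+\cdots+d_r)^{r-2}$ comes from. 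Without the $\lambda_2=0$ specialisation, or some substitute argument showing that all non-star graphs cancel or resum, your plan does not produce the left-hand side of \eqref{eqn: combinatorial conjecture}.
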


\begin{proof} The connected component corresponding to the ordered multi-degree $(d,0,0)$ is simply:
\begin{equation*} \Kup_{0,1}(D_0,d) = \Kup_{0,1}(\PP^1,d) \subseteq \Kup(\Delta).\end{equation*}
We will show that the integral of the logarithmic virtual class over this component is equal to the left-hand side of \eqref{eqn: combinatorial conjecture}. Proceeding with the localisation procedure outlined in \S \ref{sec: localisation scheme}, we make the following specialisation:
\begin{equation*} \lambda_2 = 0.\end{equation*}
This is well-defined since $\lambda_2$ never appears as a factor in the denominator of a localisation contribution. At the end of the graph-splitting algorithm (see \S \ref{sec: graph splitting}), we are left with atomic graphs of the form:
\begin{center}
\begin{tikzpicture}
	\draw (-3,-0.4) node[below]{$(1)$};
	
	\draw[fill=black] (-4,0) circle[radius=2pt];
	\draw (-4,0) node[below]{\small$p_1$};
	
	\draw (-4,0) -- (-4.5,0.5);
	\draw (-4.6,0.65) node{\small$x$};
	
	\draw (-4,0) -- (-2,0);
	\draw (-3,0) node[above]{\small$e$};
	
	\draw[fill=black] (-2,0) circle[radius=2pt];
	\draw (-2,0) node[below]{\small$p_2$};

	\draw (1,-0.4) node[below]{$(2)$};
	
	\draw[fill=black] (0,0) circle[radius=2pt];
	\draw (0,0) node[below]{\small$p_1$};
	
	\draw (0,0) -- (2,0);
	\draw (1,0) node[above]{\small$e$};
	
	\draw[fill=black] (2,0) circle[radius=2pt];
	\draw (2,0) node[below]{\small$p_2$};
	
	\draw (2,0) -- (2.5,0.5);
	\draw (2.6,0.65) node{\small$x$};
\end{tikzpicture}	
\end{center}
However, we see from \eqref{eqn: atomic contribution} that the atomic contributions of graphs of type $(2)$ contain a factor of $\lambda_2$, and therefore vanish. As such, we only need to consider localisation graphs whose atomic pieces are all of type $(1)$. It is easy to see that these must take the following simple form:
\begin{equation} \label{eqn: localisation graph}
\begin{tikzpicture}[baseline=(current  bounding  box.center)]

	\draw[fill] (0,0) circle[radius=2pt];
	\draw (0,0) node[below]{\small$p_1$};
	
	\draw (0,0) -- (-0.5,0.5);
	\draw (-0.6,0.65) node{\small$x$};
	
	\draw (0,0) -- (2,1);
	\draw (1,0.5) node[above]{\small$d_1$};
	\draw[fill] (2,1) circle[radius=2pt];
	\draw (2,1) node[below]{\small$p_2$};
	
	\draw (0,0) -- (2,-1);
	\draw (1,-0.5) node[below]{\small$d_r$};
	\draw[fill] (2,-1) circle[radius=2pt];
	\draw (2,-1) node[below]{\small$p_2$};
	
	\draw (2,0.05) node{$\vdots$};
	
\end{tikzpicture}	
\end{equation}
Recall that $\lambda_0+\lambda_1+\lambda_2=0$, and thus the specialisation $\lambda_2=0$ implies $\lambda_0=-\lambda_1$. Consequently, every localisation contribution will collapse to a number. Using \eqref{eqn: atomic contribution}, we calculate the product of the atomic contributions of \eqref{eqn: localisation graph} to be:
\begin{equation*} \prod_{i=1}^r \dfrac{(-1)^{d_i - 1}}{d_i^2} {3d_i \choose d_i}.\end{equation*}
On the other hand, the defect in the normal bundle is given by:
\begin{equation*} \int_{\ol{\Mcal}_{0,r+1}} \dfrac{2^{r-1} \cdot \lambda_1^{2r-2}}{\prod_{i=1}^r ( \lambda_1/d_i - \psi_i)}.\end{equation*}
Expanding the denominator as a power series and examining monomials of degree $r-2$ in the $\psi_i$, we obtain a sum over terms of the form
\begin{align*} \prod_{i=1}^r (d_i/\lambda_1)^{a_i+1} \cdot \int_{\ol\Mcal_{0,r+1}} \prod_{i=1}^r \psi_i^{a_i} \end{align*}
where $a_1+\ldots+a_r = r-2$ with $a_i \geq 0$. We may now calculate these integrals:
\begin{align*} \prod_{i=1}^r (d_i/\lambda_1)^{a_i+1} \int_{\ol\Mcal_{0,r+1}} \prod_{i=1}^r \psi_i^{a_i} & = \prod_{i=1}^r (d_i/\lambda_1)^{a_i+1}  \cdot {r-2 \choose a_1, \ldots, a_r} \\
& = (1/\lambda_1^{2r-2})  \prod_{i=1}^r d_i^{a_i+1} \cdot {r-2 \choose a_1, \ldots, a_r}. \end{align*}
Finally, by the multinomial theorem, the sum of these terms over $(a_1,\ldots,a_r)$ is equal to
\begin{equation*} (1/\lambda_1^{2r-2}) \left( \Pi_{i=1}^r d_i \right) (d_1+\ldots+d_r)^{r-2} = (1/\lambda_1^{2r-2}) \left( \Pi_{i=1}^r d_i \right) d^{r-2}\end{equation*}
and we conclude that the normal bundle defect is given by:
\begin{equation*} 2^{r-1} d^{r-2} \prod_{i=1}^r d_i. \end{equation*}
Multiplying this defect by the product of the atomic contributions, we obtain the contribution of the localisation graph \eqref{eqn: localisation graph}:
\begin{equation*} \dfrac{2^{r-1} \cdot d^{r-2}}{\#\!\operatorname{Aut}(d_1,\ldots,d_r)} \prod_{i=1}^r \dfrac{(-1)^{d_i-1}}{d_i} {3d_i \choose d_i}.\end{equation*}
Since such graphs are indexed by strictly positive unordered partitions of $d$, the claim follows.\end{proof}

\subsection{Degenerations of embedded curves}\label{sec: degenerations of curves}\label{sec: embedded curves} The Gromov--Witten theory of $(\PP^2,E)$ incorporates contributions from multiple covers and reducible curves, making a direct geometric interpretation difficult. On the other hand, in low degrees it is possible to directly count the number of embedded rational curves in $\PP^2$ maximally tangent to $E$ \cite{TakahashiCurvesComplement}. The relationship between these classical enumerative counts and the Gromov--Witten invariants is governed by multiple cover formulae \cite[\S 6]{GPS} and logarithmic gluing results \cite{CvGKTDegenerateContributions}, though there remain many degenerate loci whose contributions are not yet understood.

Consider, as before, a degeneration of $E$ to $\Delta$. An embedded tangent curve to $E$ degenerates uniquely along with the divisor, and it is natural to ask what one obtains in the central fibre. This limiting curve must be contained entirely inside $\Delta$ (otherwise, the limit would have to intersect $\Delta$ in at least two distinct points, and then the same would be true on the general fibre) and so every embedded tangent curve to $E$ defines a unique limiting multi-degree. Determining which curves in the general fibre limit to which multi-degrees in the central fibre, however, is a rather subtle problem.

In this section we uncover this limiting behaviour, using the above Gromov--Witten calculations (on the central fibre) together with known multiple cover formulae (on the general fibre) to unravel the behaviour of embedded curves. We obtain a complete description for $d=1,2,3$, and partial information for $d \geq 4$. These are results in classical enumerative geometry, but are, as far as we are aware, new. We do not know of a proof which does not pass through logarithmic Gromov--Witten theory.

\subsubsection{Review: tangent curves and torsion points} We begin with a brief recap of the geometry of tangent curves to $E$. This is a vast subject with a long history, and we make no attempt at completeness; for a more detailed exposition, see for instance \cite[\S 0]{BousseauTakahashi}.

Fix $p_0 \in E$ a flex point of the cubic. It is easy to show that if $C \subseteq \PP^2$ is a degree $d$ curve maximally tangent to $E$ at a point $p$, then $p$ must be a $3d$-torsion point of the elliptic curve $(E,p_0)$. Hence, for each $d$ there are precisely $(3d)^2$ candidate points on $E$ which can support a tangent curve of degree $d$.

These points can be subdivided according to their order in the group $(E,p_0)$. For our purposes, it is only the divisibility by $3$ which is important. We therefore say that a point $p$ has \emph{index $3k$} if $k$ is the smallest integer such that the order of $p$ divides $3k$. For example, when $d=2$ there are $36$ $6$-torsion points, and these split up into $9$ points of index $3$ and $27$ points of index $6$. Of the latter, $3$ have order $2$, while the remaining $24$ have order $6$, but this further refinement is not relevant to the discussion.

Given a point $p \in E$ of index $3k$ for $k|d$, we can ask for the number of embedded integral rational curves of degree $d$ intersecting $E$ with maximal tangency at the point $p$. It turns out that these numbers only depend on $d$ and $k$. They have been computed in low degrees in \cite{TakahashiCurvesComplement} (certain cases may also be deduced by combining Takahashi's formula \cite{BousseauTakahashi} with the Gross--Pandharipande--Siebert multiple cover formula \cite{GPS}).

For small $d$ it is therefore possible, summing over the $3d$-torsion points, to enumerate all the embedded tangent curves and describe precisely their contributions to the Gromov--Witten theory. In these cases, we can leverage our earlier Gromov--Witten calculations to study the degeneration behaviour of these curves.

\subsubsection{Degree $1$} This case is somewhat trivial. On the general fibre, there are $9$ $3$-torsion points which each support a unique tangent line. On the central fibre, the only valid multi-degree is $(1,0,0)$. There is, however, a finer decomposition given by the ordered multi-degree, which in this case records which of the co-ordinate lines $D_0,D_1,D_2$ support the limit curve. This decomposes the central fibre moduli space into $3$ connected components, each with a contribution of $3$ to the Gromov--Witten invariant.

Thus we see that, of the $9$ flex lines in the general fibre, $3$ of them limit to each of $D_0,D_1,D_2$. In the more complicated cases to follow, we will ignore this $3$-fold symmetry.

\subsubsection{Degree $2$}\label{sec: degree 2 degenerations} \label{sec: deg 2 degeneration} This is the first interesting case. There are $36$ $6$-torsion points, $9$ of which have index $3$ and the remaining $27$ of which have index $6$. By \cite[Proposition 1.4]{TakahashiCurvesComplement} we know that:
\begin{itemize}
\item each index $3$ point supports $1$ tangent line and no tangent conic;
\item each index $6$ point supports $1$ tangent conic.	
\end{itemize}
The general fibre moduli space $\Kup^{\log}(\PP^2|E)$ therefore consist of $27$ isolated points parametrising the tangent conics, together with $9$ one-dimensional components parametrising ramified double covers of the flex lines. Each of these one-dimensional components contributes $3/4$ to the Gromov--Witten invariant \cite[Proposition 6.1]{GPS}.

The central fibre moduli space $\Kup^{\log}(\Xcal_0)$, on the other hand, decomposes according to the multi-degrees $(2,0,0)$ and $(1,1,0)$ which we refer to as double covers and split curves, respectively. Of course, double covers in the general fibre must limit to double covers in the central fibre, but it is not clear how many of the $27$ conics in the general fibre limit to double covers, and how many limit to split curves. We thus have the following partially-completed degeneration picture
\begin{center}
\begin{tikzpicture}
\draw [thick] (0,0) -- (7,0);
\draw [fill] (1,0) circle[radius=2pt];
\draw (1,0) node[below]{\small{$\eta$}};
\draw [fill] (5,0) circle[radius=2pt];
\draw (5,0) node[below]{\small{$0$}};

\draw [thick] (0.5,0.5) -- (1.5,1) -- (1.5,2.5) -- (0.5,2) -- (0.5,0.5);
\draw [thick,blue] (0.75,0.75) -- (1.25,2);
\draw [blue] (1.05,1.5) node[left]{\small$2$};
\draw (1.5,1.6) node[right]{$9\left(\frac{3}{4}\right)$};
\draw [thick] (0.5,3) -- (1.5,3.5) -- (1.5,5) -- (0.5,4.5) -- (0.5,3);
\draw [thick,blue] (1,4) circle[radius=10pt];
\draw (1.5,4.1) node[right]{$27$};

\draw [thick] (4.5,0.5) -- (5.5,1) -- (5.5,2.5) -- (4.5,2) -- (4.5,0.5);
\draw [thick,blue] (4.75,0.75) -- (5.25,2);
\draw [blue] (5.05,1.5) node[left]{\small$2$};
\draw [thick] (4.5,3) -- (5.5,3.5) -- (5.5,5) -- (4.5,4.5) -- (4.5,3);
\draw [thick,blue] (4.6,3.2) -- (5.1,4.6);
\draw [thick,blue] (4.9,4.6) -- (5.4,3.7);

\draw [->] (2.3,4.1) -- (4.2,4.1);
\draw (3.45,4.1) node[above]{$a$};

\draw [->] (2.3,3.9) -- (4.2,2);
\draw (3.45,2.9) node[above]{$b$};

\draw [->] (2.8,1.6) -- (4.2,1.6);
\draw (3.45,1.6) node[above]{$9$};

\end{tikzpicture}	
\end{center}
with $a$ and $b$ unknown. But we have calculated the contributions of the components on the right-hand side (see \S \ref{sec: tables}): they are $18$ and $63/4$, respectively. This allows us to uniquely solve the above picture for $a$ and $b$:
\begin{center}
\begin{tikzpicture}
\draw [thick] (0,0) -- (7,0);
\draw [fill] (1,0) circle[radius=2pt];
\draw (1,0) node[below]{\small{$\eta$}};
\draw [fill] (5,0) circle[radius=2pt];
\draw (5,0) node[below]{\small{$0$}};

\draw [thick] (0.5,0.5) -- (1.5,1) -- (1.5,2.5) -- (0.5,2) -- (0.5,0.5);
\draw [thick,blue] (0.75,0.75) -- (1.25,2);
\draw [blue] (1.05,1.5) node[left]{\small$2$};
\draw (1.5,1.6) node[right]{$9\left(\frac{3}{4}\right)$};
\draw [thick] (0.5,3) -- (1.5,3.5) -- (1.5,5) -- (0.5,4.5) -- (0.5,3);
\draw [thick,blue] (1,4) circle[radius=10pt];
\draw (1.5,4.1) node[right]{$27$};

 Double lines: multi-degree (2)
\draw [thick] (4.5,0.5) -- (5.5,1) -- (5.5,2.5) -- (4.5,2) -- (4.5,0.5);
\draw [thick,blue] (4.75,0.75) -- (5.25,2);
\draw [blue] (5.05,1.5) node[left]{\small$2$};
\draw [thick] (4.5,3) -- (5.5,3.5) -- (5.5,5) -- (4.5,4.5) -- (4.5,3);
\draw [thick,blue] (4.6,3.2) -- (5.1,4.6);
\draw [thick,blue] (4.9,4.6) -- (5.4,3.7);

\draw [->] (2.3,4.1) -- (4.2,4.1);
\draw (3.45,4.1) node[above]{$18$};

\draw [->] (2.3,3.9) -- (4.2,2);
\draw (3.45,2.9) node[above]{$9$};

\draw [->] (2.8,1.6) -- (4.2,1.6);
\draw (3.45,1.6) node[above]{$9$};

\draw (5.5,4.1) node[right]{$18=18$};
\draw (5.5,1.6) node[right]{$\dfrac{63}{4} = 9+9(\frac{3}{4})$};
\end{tikzpicture}	
\end{center}
Thus, of the $27$ embedded conics tangent to $E$, $18$ limit to split curves and $9$ limit to double lines.

We note that the decomposition of the $27$ index $6$ points into $18+9$ can also be obtained by examining the sizes of their orbits under the monodromy action. However, this information is not sufficient to unravel the degeneration behaviour for $d\geq 3$.

\subsubsection{Degree $3$} \label{sec: deg 3 degeneration} In this case there are $81$ $9$-torsion points, which split into $9$ points of index $3$ and $72$ of index $9$. By \cite[Proposition 1.5]{TakahashiCurvesComplement} (see also \cite{RanUnisecant}) we know that:
\begin{itemize}
\item each index $3$ point supports $1$ tangent line and $2$ tangent cubics;
\item each index $9$ point supports $3$ tangent cubics.	
\end{itemize}
Thus in total there are $9\cdot 2 + 72 \cdot 3 = 234$ tangent cubics in the general fibre moduli space. There are also $9$ two-dimensional components parametrising triple covers of the flex lines, each contributing $10/9$ to the Gromov--Witten invariant. The central fibre moduli space decomposes according to the possible multi-degrees $(3,0,0)$, $(2,1,0)$ and $(1,1,1)$. We thus have the following degeneration picture\newpage
\begin{center}
\begin{tikzpicture}

\draw [thick] (-2,-0.5) -- (7,-0.5);
\draw [fill] (0,-0.5) circle[radius=2pt];
\draw (0,-0.5) node[below]{\small{$\eta$}};
\draw [fill] (5,-0.5) circle[radius=2pt];
\draw (5,-0.5) node[below]{\small{$0$}};

\draw [thick] (-0.5,0) -- (0.5,0.5) -- (0.5,2) -- (-0.5,1.5) -- (-0.5,0);
\draw [thick,blue] (-0.25,0.25) -- (0.25,1.5);
\draw [blue] (0.05,1) node[left]{\small$3$};
\draw (0.5,1) node[right]{$9\left(\frac{10}{9}\right)$};
\draw [thick] (-0.5,2.9) -- (0.5,3.5) -- (0.5,5) -- (-0.5,4.5) -- (-0.5,2.9);
\draw [color=blue,thick] (0.4,4.7) to [out=260,in=0] (-0.2,3.7) to [out = 180,in=270] (-0.45,3.9) to [out=90,in=180] (-0.2,4.2) to [out=0, in=100] (0.4,3.5);

\draw (0.55,3.95) node[right]{$234$};

\draw [thick] (4.5,0) -- (5.5,0.5) -- (5.5,2) -- (4.5,1.5) -- (4.5,0);
\draw [thick,blue] (4.75,0.25) -- (5.25,1.5);
\draw [blue] (5.05,1) node[left]{\small$3$};
\draw [thick] (4.5,2.5) -- (5.5,3) -- (5.5,4.5) -- (4.5,4) -- (4.5,2.5);
\draw [thick,blue] (4.6,2.7) -- (5.1,4.1);
\draw [blue] (4.9,3.4) node[left]{\small$2$};
\draw [thick,blue] (4.9,4.1) -- (5.4,3.2);
\draw [thick] (4.5,5) -- (5.5,5.5) -- (5.5,7) -- (4.5,6.5) -- (4.5,5);
\draw [thick,blue] (4.6,5.2) -- (5.1,6.6);
\draw [thick,blue] (4.9,6.6) -- (5.4,5.7);
\draw [thick,blue] (4.55,5.35) -- (5.4,5.9);

\draw [->] (1.7,4.2) -- (4.2,5.7);
\draw (3,5.05) node[above]{$a$};

\draw [->] (1.7,3.9) -- (4.2,3.3);
\draw (3,3.65) node[above]{$b$};

\draw [->] (1.7,3.6) -- (4.2,1.3);
\draw (3,2.55) node[above]{$c$};

\draw [->] (2,1) -- (4.2,1);
\draw (3,1) node[below]{$9$};

\end{tikzpicture}
\end{center}
with $a,b,c$ unknown. Once again, we can solve this uniquely using our knowledge of the central fibre contributions:
\begin{center}
\begin{tikzpicture}

\draw [thick] (-2,-0.5) -- (7,-0.5);
\draw [fill] (0,-0.5) circle[radius=2pt];
\draw (0,-0.5) node[below]{\small{$\eta$}};
\draw [fill] (5,-0.5) circle[radius=2pt];
\draw (5,-0.5) node[below]{\small{$0$}};

\draw [thick] (-0.5,0) -- (0.5,0.5) -- (0.5,2) -- (-0.5,1.5) -- (-0.5,0);
\draw [thick,blue] (-0.25,0.25) -- (0.25,1.5);
\draw [blue] (0.05,1) node[left]{\small$3$};
\draw (0.5,1) node[right]{$9\left(\frac{10}{9}\right)$};
\draw [thick] (-0.5,2.9) -- (0.5,3.5) -- (0.5,5) -- (-0.5,4.5) -- (-0.5,2.9);
\draw [color=blue,thick] (0.4,4.7) to [out=260,in=0] (-0.2,3.7) to [out = 180,in=270] (-0.45,3.9) to [out=90,in=180] (-0.2,4.2) to [out=0, in=100] (0.4,3.5);

\draw (0.55,3.95) node[right]{$234$};

\draw [thick] (4.5,0) -- (5.5,0.5) -- (5.5,2) -- (4.5,1.5) -- (4.5,0);
\draw [thick,blue] (4.75,0.25) -- (5.25,1.5);
\draw [blue] (5.05,1) node[left]{\small$3$};
\draw [thick] (4.5,2.5) -- (5.5,3) -- (5.5,4.5) -- (4.5,4) -- (4.5,2.5);
\draw [thick,blue] (4.6,2.7) -- (5.1,4.1);
\draw [blue] (4.9,3.4) node[left]{\small$2$};
\draw [thick,blue] (4.9,4.1) -- (5.4,3.2);
\draw [thick] (4.5,5) -- (5.5,5.5) -- (5.5,7) -- (4.5,6.5) -- (4.5,5);
\draw [thick,blue] (4.6,5.2) -- (5.1,6.6);
\draw [thick,blue] (4.9,6.6) -- (5.4,5.7);
\draw [thick,blue] (4.55,5.35) -- (5.4,5.9);

\draw [->] (1.7,4.2) -- (4.2,5.7);
\draw (3,5.05) node[above]{$27$};

\draw [->] (1.7,3.9) -- (4.2,3.3);
\draw (3,3.65) node[above]{$162$};

\draw [->] (1.7,3.6) -- (4.2,1.3);
\draw (3,2.55) node[above]{$45$};

\draw [->] (2,1) -- (4.2,1);
\draw (3,1) node[below]{$9$};

\draw (5.5,6.1) node[right]{$27=27$};
\draw (5.5,3.6) node[right]{$162=162$};
\draw (5.5,1.1) node[right]{$55 = 45+9\left(\frac{10}{9}\right)$};
\end{tikzpicture}
\end{center}

\subsubsection{Degree $4$ and beyond} For $d=4$, there are $144$ $12$-torsion points, which split into $9$ points of index $3$, $27$ points of index $6$ and $108$ points of index $12$. We know that \cite{TakahashiCurvesComplement}:
\begin{itemize}
\item each index $3$ point supports $1$ tangent line, no tangent conics and $8$ tangent quartics;
\item each index $6$ point supports $1$ tangent conic and $14$ tangent quartics;
\item each index $12$ point supports $16$ tangent quartics.	
\end{itemize}
There are thus $9 \cdot 8 + 27 \cdot 14 + 108 \cdot 16 =  2178$ embedded rational tangent quartics in the general fibre. In addition we have the following components parametrising degenerate maps:
\begin{itemize}
\item $9$ three-dimensional components, parametrising quadruple covers of flex lines;
\item $27$ one-dimensional components, parametrising double covers of embedded conics;
\item $9\cdot 2 = 18$ zero-dimensional components, parametrising reducible maps whose image is the union of a flex line and a tangent cubic passing through a flex point.
\end{itemize}
The multiple cover components contribute $35/16$ and $9/4$, respectively. On the other hand, logarithmic gluing considerations \cite{CvGKTDegenerateContributions} show that each of the $18$ components parametrising reducible curves contributes $3$ (more precisely, each ``component'' is actually made up of $3$ isolated points). We arrive at the following illustration of the general fibre moduli space:
\begin{center}
\begin{tikzpicture}
\draw [thick] (-2.5,2.9) -- (-1.5,3.5) -- (-1.5,5) -- (-2.5,4.5) -- (-2.5,2.9);
\draw [thick,blue] (-1.6,4.9) to [out=260,in=0] (-1.8,4.5) to [out=180,in=280] (-1.9,4.6) to [out=90,in=180] (-1.8,4.7) to [out=0, in = 100] (-1.7,4.6) to [out=280, in=70] (-1.8,4.1) to [out=250,in=0] (-2,3.9) to [out=180, in=270] (-2.1,4) to [out=90,in=180] (-2,4.1) to [out=0,in=100] (-1.9,4) to [out=280,in=0] (-2.2,3.5) to [out=180, in=270] (-2.3,3.6) to [out=90,in=180] (-2.2,3.7) to [out=0,in=100] (-2.1,3.6) to [out=280, in=40] (-2.3,3.1);
\draw (-2,2.5) node{$2178$};

\draw [color=blue,thick] (-0.4,4.5) to [out=335,in=90] (0.3,3.9) to [out=270,in=0] (0,3.6) to [out=180,in=270] (-0.3,3.9) to [out=90, in=205] (0.4,4.6);
\draw [color=blue,thick] (-0.5,4.1) -- (0.09,3.25);
\draw [color=blue,fill] (-0.25,3.75) circle[radius=2pt];
\draw [thick] (-0.5,2.9) -- (0.5,3.5) -- (0.5,5) -- (-0.5,4.5) -- (-0.5,2.9);

\draw (0,2.5) node{$18(3)$};

\draw [thick] (1.5,2.9) -- (2.5,3.5) -- (2.5,5) -- (1.5,4.5) -- (1.5,2.9);
\draw [thick,blue] (2,4) circle[radius=10pt];
\draw [blue] (1.75,3.7) node[below]{\small$2$};
\draw (2,2.5) node{$27\left(\frac{9}{4}\right)$};

\draw [thick] (3.5,2.9) -- (4.5,3.5) -- (4.5,5) -- (3.5,4.5) -- (3.5,2.9);
\draw [thick,blue] (3.75,3.15) -- (4.4,4.75);
\draw [blue] (4.1,4) node[left]{\small$4$};
\draw (4,2.5) node{$9\left(\frac{35}{16}\right)$};


\end{tikzpicture}	
\end{center}
We wish to describe the degeneration behaviour of the $2178$ integral quartics. In order to do this, it is first necessary necessary to describe the degeneration behaviour of the multiple covers and reducible curves. The degenerations of multiple covers are determined by the previous calculations for $d=1$ and $d=2$.

The degenerations of the reducible curves, however, cannot be deduced from previous calculations. The problem is that, although we know how many of the $234$ tangent cubics limit to each of the multi-degrees $(3,0,0)$,$(2,1,0),(1,1,1)$, we are not able to separate out the limiting behaviour of cubics passing through an index $3$ point from those of cubics passing through an index $9$ point. This further information is crucial here, since only the cubics passing through an index $3$ point appear in the $d=4$ moduli space.

We hope to address this issue in the future by refining our techniques to separate out the contributions of different torsion points, possibly via a synthesis with the methods of \cite{Graefnitz}. For the moment, however, we are unable to make progress beyond $d=3$.

As a final remark it is worth pointing out that for $d > 4$ further difficulties arise, due to components of the \emph{general} fibre whose contributions to the Gromov--Witten invariants are not yet known. These include stable maps obtained by gluing two or more multiple covers, as well as those obtained by gluing three or more embedded tangent curves (where one also has moduli for the contracted component of the source curve). The degeneration questions we consider here may serve as motivation for the determination of such contributions.

\footnotesize
\bibliographystyle{alpha}
\bibliography{Bibliography}

\bigskip\bigskip

\noindent Lawrence Jack Barrott \\
Boston College, USA \\
\href{mailto:barrott@bc.edu}{barrott@bc.edu}\\

\noindent Navid Nabijou \\
University of Cambridge, UK \\
\href{mailto:nn333@cam.ac.uk}{nn333@cam.ac.uk}

\end{document}